\theoremstyle{definition}
\newtheorem{Def}{Definition}[section]
\newtheorem{Rem}[Def]{Remark}
\theoremstyle{plain}
\newtheorem{Thm}[Def]{Theorem}
\newtheorem{Lem}[Def]{Lemma}
\newtheorem{Prop}[Def]{Proposition}
\newtheorem{Cor}[Def]{Corollary}
\newtheorem{claim}{Claim}
\numberwithin{equation}{section}
\newcommand{\AAa}{\mathbb{A}}
\newcommand{\BB}{\mathbb{B}}
\newcommand{\NN}{\mathbb{N}}
\newcommand{\RR}{\mathbb{R}}
\newcommand{\SSp}{\mathbb{S}}
\newcommand{\ZZ}{\mathbb{Z}}
\newcommand{\cA}{\mathcal{A}}
\newcommand{\cC}{\mathcal{C}}
\newcommand{\cF}{\mathcal{F}}
\newcommand{\cH}{\mathcal{H}}
\newcommand{\cL}{\mathcal{L}}
\newcommand{\cP}{\mathcal{P}}
\newcommand{\cR}{\mathcal{R}}
\newcommand{\cV}{\mathcal{V}}
\newcommand{\cW}{\mathcal{W}}
\newcommand{\cZ}{\mathcal{Z}}
\newcommand{\scB}{\mathscr{B}}
\newcommand{\scC}{\mathscr{C}}
\newcommand{\scE}{\mathscr{E}}
\newcommand{\scF}{\mathscr{F}}
\newcommand{\scG}{\mathscr{G}}
\newcommand{\scM}{\mathscr{M}}
\newcommand{\scR}{\mathscr{R}}
\newcommand{\scU}{\mathscr{U}}
\newcommand{\mbfC}{\mathbf{C}}
\newcommand{\mbfF}{\mathbf{F}}
\newcommand{\mbfI}{\mathbf{I}}
\newcommand{\mbfM}{\mathbf{M}}
\newcommand{\SCap}{\mathbf{SCap}}
\newcommand{\IV}{\mathcal{IV}}
\newcommand{\set}[1]{\left\{#1\right\}}
\newcommand{\Sing}{\operatorname{Sing}}
\newcommand{\supp}{\operatorname{supp}}
\newcommand{\BV}{\operatorname{BV}}
\title{Generic regularity of minimal hypersurfaces in~dimension~$8$}
\author{Yangyang Li}
\address{Department of Mathematics, Princeton University, Fine Hall, 304 Washington Road, Princeton, NJ 08540, USA}
\email{yl15@math.princeton.edu}
\author{Zhihan Wang}
\address{Department of Mathematics, Princeton University, Fine Hall, 304 Washington Road, Princeton, NJ 08540, USA}
\email{zhihanw@math.princeton.edu}
\date{}
\thanks{Y. Li was partially supported by NSF-DMS-1811840}
\begin{document}
\bibliographystyle{abbrvalpha}

\begin{abstract}
    In this paper, we show that every $8$-dimensional closed Riemmanian manifold with $C^\infty$-generic metrics admits a smooth minimal hypersurface. This generalized previous results by N. Smale \cite{smaleGenericRegularityHomologically1993} and Chodosh-Liokumovich-Spolaor \cite{chodoshSingularBehaviorGeneric2020}. Different from their local perturbation techniques, our construction is based on a global perturbation argument in \cite{wangDeformationsSingularMinimal2020} and a novel geometric invariant which counts singular points with suitable weights.
\end{abstract}
\maketitle

\section{Introduction}

    The interior regularity theory of minimal hypersurfaces dates back to around 1970, when H. Federer \cite{federerSingularSetsArea1970} applied his dimension reduction arguments to show that the singular set of any area-minimizing rectifiable hypercurrent has Hausdorff codimension at least $7$ (away from its boundary). His proof relied on a previous work by J. Simons \cite{simonsMinimalVarietiesRiemannian1968} on the nonexistence of stable cones in $\mathbb{R}^{n+1}$ for $2 \leq n \leq 6$. In particular, in a Riemannian manifold $M^{n+1}$ with $2 \leq n \leq 6$, an area-minimizing integral $n$-cycle must be a smooth minimal hypersurface. However, even when $n = 7$, Bombieri-De Giorgi-Giusti \cite{bombieriMinimalConesBernstein1969} has proved that the famous Simons cone (introduced in \cite{simonsMinimalVarietiesRiemannian1968})
    \begin{equation}
        C = \set{(x, y) \in \mathbb{R}^4 \times \mathbb{R}^4 : |x| = |y|}\,,
    \end{equation}
    is an area-minimizing hypercone in $\mathbb{R}^8$, which has a singular point at the origin.

    The similar phenomenon occurs in the stable minimal hypersurface (varifold) setting with natural a priori assumption on the singular set, which was originally proved by Schoen-Simon \cite{schoenRegularityStableMinimal1981} and later improved by N. Wickramasekara \cite{wickramasekeraGeneralRegularityTheory2014}. This deep regularity theorem plays an important role in the Almgren-Pitts min-max theory \cite{almgrenTheoryVarifoldsVariational1965,pittsExistenceRegularityMinimal1981} extending the original regularity result obtained from Schoen-Simon-Yau \cite{schoenCurvatureEstimatesMinimal1975} to higher dimensional closed manifolds. Quantitative versions of the partial regularity theorem and rectifiability of singular set can be found in \cite{cheegerQuantitativeStratificationRegularity2013,naberSingularStructureRegularity2020}.

    In general, the existence of Simons cone has shown that even in dimension $8$, a minimal hypersurface can have singular set. In fact, N. Smale \cite{smaleMinimalHypersurfacesMany1989} has constructed explicit examples of minimal hypersurfaces with boundary containing arbitrarily many singular points virtually by applying bridge principle on stable cones. Later, in \cite{smaleSingularHomologicallyArea1999}, he also constructed a closed Riemannian manifold admitting a unique area-minimizer in a homology class, which contains $2$ singular points. \cite{Simon2021Sing} shown that any closed subset in $\RR^{n-7}$ can be realized as singular set of some stable minimal hypersurface in $(\RR^{n+1}, g)$ for some smooth metric $g$ arbitrarily close to the Euclidean one, making the rectifiability result of singular sets by \cite{naberSingularStructureRegularity2020} sharp in some way.

    This would naturally stimulate one to opine the opposite:\\

    \noindent\textbf{Q 1}: \textit{Is it possible to resolve singularities of a minimal hypersurface by a small perturbation of the ambient metric?}\\

    The same question on area-minimizing hypersurfaces has been raised in \cite{10.2307/j.ctt1bd6kkq.37,smaleMinimalHypersurfacesMany1989}. An affirmative answer to this question together with the construction of minimal hypersurfaces from Almgren-Pitts min-max theory would directly lead to the generic existence of smooth minimal hypersurfaces.\\

    \noindent\textbf{Q 2}: \textit{Does there exist a smooth minimal hypersurface in a closed Riemannian manifold with generic metrics?}\\

    In a closed manifold $M$ of dimension $8$, N. Smale \cite{smaleGenericRegularityHomologically1993} confirmed the existence of such a small perturbation for an area minimizer in its homology classes and thus, the generic existence of smooth minimal hypersurfaces. Recently, Chodosh-Liokumovich-Spolaor \cite{chodoshSingularBehaviorGeneric2020} justified the generic existence in the positive Ricci curvature case by constructing an optimal nested volume parametrized sweepout for $1$-parameter min-max minimal hypersurfaces. Both of their arguments essentially employed the local analysis of isolated singular points by Hardt-Simon \cite{hardtAreaMinimizingHypersurfaces1985}, which produces unique Hardt-Simon foliations of smooth minimal hypersurfaces in either side of a regular area-minimizing cone. Similar results were obtained for regular one-sided area-minimizing cone by F. Lin \cite{Lin_1987} and Z. Liu \cite{liuStationaryOnesidedAreaminimizing2019}. Nevertheless, by a calibration argument \cite{bombieriMinimalConesBernstein1969,lawlorSufficientCriterionCone2012}, the existence of such a smooth foliations implies that the regular minimal cone should be at least one-sided area-minimizing, and thus, it seems impossible to apply this local analysis directly on a regular stable minimal cone which is not area-minimizing one either side. To the best of the authors' knowledge, the existence of such a cone is still widely open, and a negative answer to this will significantly simplify our arguments and other important analysis in the literature.

    In this paper, we shall utilize a global perturbation argument from \cite{wangDeformationsSingularMinimal2020} (See Lemma \ref{Lem_Pre_Perturb to find MH reg}) and a novel geometric invariant, ``singular capacity'' (See Definition \ref{Def_Sing Cap}) to resolve singular points by small perturbations. As an application, we are able to settle the second question and prove the generic existence of a smooth minimal hypersurface in a given closed Riemannian manifold.

    \begin{Thm}[Main Theorem] \label{Thm_Intro_Main}
        Let $M$ be an $8$-dimensional closed Rimeannian manifold with $H_7(M, \ZZ_2) = 0$. Then for any $C^k$-generic ($k \geq 4$ or $k = \infty$) Riemannian metric $g$, there exists a closed embedded smooth minimal hypersurface $\Sigma$ in $(M,g)$.
    \end{Thm}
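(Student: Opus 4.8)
The plan is to combine one-parameter Almgren--Pitts min-max, the local regularity theory in dimension~$8$, the global metric perturbation of Lemma~\ref{Lem_Pre_Perturb to find MH reg}, and the monotonicity of the singular capacity of Definition~\ref{Def_Sing Cap} under such perturbations, and then to promote this to a genericity statement by a density-plus-implicit-function-theorem argument.

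First I would run the one-parameter min-max: for every $C^k$ metric $g$ the width $\omega_1(M,g)>0$ is attained by a closed minimal hypersurface $\Sigma_g$, possibly with integer multiplicity. Since $\dim M = 8$, the Schoen--Simon regularity theorem together with Federer's dimension reduction forces $\Sing(\Sigma_g)$ to be a finite set of isolated points whose tangent cones are stable minimal hypercones in $\RR^8$. The hypothesis $H_7(M,\ZZ_2)=0$ enters here: it guarantees that $\Sigma_g$ is a $\ZZ_2$-boundary (so the sweepout may be taken by boundaries), which is the freedom needed to deform $\Sigma_g$ globally in the perturbation step, and it also removes the homologically area-minimizing case already settled by Smale.

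Next I would use the singular capacity $\SCap(\Sigma,g)\in\ZZ_{\ge 0}$, a weighted count of $\Sing(\Sigma)$ that vanishes precisely when $\Sigma$ is smooth. The decisive input is the following consequence of Lemma~\ref{Lem_Pre_Perturb to find MH reg} together with the choice of weights: if a min-max hypersurface $\Sigma_g$ has $\SCap(\Sigma_g,g)>0$, then for every $\varepsilon>0$ there is a metric $g'$ with $\|g'-g\|_{C^k}<\varepsilon$ admitting a min-max hypersurface $\Sigma_{g'}$ with $\SCap(\Sigma_{g'},g')\le \SCap(\Sigma_g,g)-1$; continuity of $g\mapsto\omega_1(M,g)$ and compactness of min-max limits keep the deformed object in the admissible class. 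Starting from any $g_0$ and at each stage passing to a min-max hypersurface of least singular capacity, iterate this at most $\SCap(\Sigma_{g_0},g_0)$ times, each step with error $<\varepsilon/\SCap(\Sigma_{g_0},g_0)$; this produces a metric within $\varepsilon$ of $g_0$ carrying a \emph{smooth} closed embedded minimal hypersurface, so such metrics are $C^k$-dense. A final small perturbation makes this hypersurface nondegenerate, and then the implicit function theorem furnishes a nearby smooth minimal hypersurface for every nearby metric, yielding a $C^k$-open and $C^k$-dense set of good metrics; for $k=\infty$ a standard diagonal argument over $k$ (as in \cite{chodoshSingularBehaviorGeneric2020,wangDeformationsSingularMinimal2020}) upgrades this to a $C^\infty$-residual set.

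The hard part will be the perturbation step, i.e.\ extracting the strict decrease of $\SCap$. When the tangent cone at a singular point is area-minimizing to one side, one can resolve the singularity locally via a Hardt--Simon foliation, but for a stable minimal hypercone that minimizes area to neither side no such foliation exists, so no purely local resolution is available. The remedy is to invoke the global deformation of \cite{wangDeformationsSingularMinimal2020}: perturbing the ambient metric (not necessarily near the singular point), and using that $\Sigma_g$ is a $\ZZ_2$-boundary, one forces the min-max hypersurface to move, with the Fredholm/deformation theory of singular minimal hypersurfaces controlling the possible outcomes. The weights defining $\SCap$ must be chosen so that \emph{every} such outcome --- deletion, splitting, or coalescence of singular points, and any change of multiplicity --- strictly lowers the invariant; verifying this, and in particular handling multiplicity $\ge 2$ (the multiplicity-one theorem being unavailable in dimension~$8$), is where the real work lies.
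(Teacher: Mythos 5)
Your skeleton does match the paper's strategy (min-max existence, a weighted singular count decreasing under the global perturbation of Lemma~\ref{Lem_Pre_Perturb to find MH reg}, then openness from nondegeneracy and White's structure theorem), but the two steps you wave through are exactly where the paper has to work, and as stated they fail. First, to invoke Lemma~\ref{Lem_Pre_Perturb to find MH reg} at all you need the min-max hypersurfaces of the perturbed metrics $g_j=(1+c_jf_j)g$ to converge to $\Sigma$ \emph{with multiplicity one and equal index}, and you need $\Sigma$ nondegenerate. ``Continuity of $g\mapsto\cW(M,g)$ and compactness of min-max limits'' does not give this: the width of a nearby metric may be realized by a completely different minimal hypersurface, or by the same support with multiplicity two, and then no Jacobi field on $\Sigma$ is produced. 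The paper secures these hypotheses by first modifying the sweepout near the stable realizer (Lemma~\ref{lem:tech}) and then perturbing the metric so that $\Sigma$ becomes the \emph{unique}, nondegenerate, stable hypersurface realizing the width from an ONVP sweepout (Lemma~\ref{lem:dichotomy}); this uniqueness step, together with the restriction to $\mathrm{int}(\scG_F)$ where the Frankel property forces connectedness, is what makes $\Sigma_j\to\Sigma$ automatic, and it is absent from your proposal.

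Second, your plan to handle multiplicity $\ge 2$ (and splitting/coalescence) by a clever choice of the weights in $\SCap$ cannot work as described: the only monotonicity the capacity has is upper semicontinuity under \emph{multiplicity-one} convergence of \emph{stable} pieces (Definition~\ref{Def_Sing Cap}(iii)), and the weights are built from a quantitative bound $N(\Lambda)$ on how many singularities can appear near a given cone (Lemma~\ref{Lem_Sing Cap_Uniform bd on |Sing|}), not from an analysis of multiplicity changes. The drop by one comes from combining this semicontinuity away from one singular point with the fact that Lemma~\ref{Lem_Pre_Perturb to find MH reg} empties a whole neighborhood $U_{p^*}$ whose cone has capacity $\ge 1$. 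The paper never feeds a multiplicity-two or index-one min-max limit into the capacity argument: by the Chodosh--Liokumovich--Spolaor dichotomy (Theorem~\ref{thm:sing_index}), if multiplicity two or nonzero index occurs then $\mathfrak{h}_{nm}(\Sigma)=\emptyset$, every singular point is one-sided homotopy area-minimizing, and the local Hardt--Simon surgery (Lemma~\ref{lem:surgery}) already yields density of $\scR$ there; the same surgery disposes of the non-Frankel metrics via a locally one-sided area-minimizing hypersurface. The capacity iteration is reserved for the remaining case of a unique stable multiplicity-one realizer. So what you flag as ``where the real work lies'' is not resolved inside your framework, and the missing uniqueness/multiplicity-one reduction is a genuine gap rather than a routine compactness remark.
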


    \begin{Thm}
        Every $8$-dimensional closed Riemannian manifold $M$ with any $C^\infty$-generic metric admits a closed smooth minimal hypersurface $\Sigma$.
    \end{Thm}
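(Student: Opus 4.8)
The plan is to reduce to Theorem~\ref{Thm_Intro_Main} by a dichotomy on the group $H_7(M,\ZZ_2)$: for the fixed closed $8$-manifold $M$ either $H_7(M,\ZZ_2)=0$ or $H_7(M,\ZZ_2)\neq 0$, and I will exhibit a $C^\infty$-generic set of metrics with the desired property in each case. When $H_7(M,\ZZ_2)=0$ there is nothing further to do: this is exactly Theorem~\ref{Thm_Intro_Main} specialized to $k=\infty$.

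So suppose instead $H_7(M,\ZZ_2)\neq 0$ and fix a nonzero class $\alpha$ in it. For every metric $g$, the Federer--Fleming compactness theorem with $\ZZ_2$ coefficients yields a mod-$2$ area-minimizing $7$-cycle $\Sigma_g$ representing $\alpha$, and $\Sigma_g\neq\emptyset$ because $\alpha\neq 0$. By Simons' theorem on minimal cones \cite{simonsMinimalVarietiesRiemannian1968} together with Federer's dimension-reduction argument \cite{federerSingularSetsArea1970}, $\Sigma_g$ is a closed embedded minimal hypersurface with at most finitely many isolated singular points, each modelled on an area-minimizing hypercone in $\RR^8$. I would then invoke the generic regularity theorem of N.~Smale \cite{smaleGenericRegularityHomologically1993}, which — by resolving each such isolated singularity through the Hardt--Simon foliation of the two sides of the minimizing cone — produces a $C^\infty$-generic set of metrics $g$ for which $\Sigma_g$ is everywhere smooth. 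Thus for generic $g$, $(M,g)$ carries a closed smooth minimal hypersurface, and combining the two cases proves the theorem.

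The only place where care is needed in this reduction is to check that Smale's perturbation argument, originally phrased for integral homology classes, is insensitive to passing to $\ZZ_2$ coefficients; this is routine, since the relevant local picture (isolated singularities with area-minimizing tangent cones, together with their Hardt--Simon foliations) is identical. In other words, the present statement is essentially a corollary, and I expect no genuine obstacle in it. All the real difficulty of the paper lies in Theorem~\ref{Thm_Intro_Main}: there the minimal hypersurface comes from Almgren--Pitts min-max rather than from area minimization, so it may sit on a stable but not one-sided-area-minimizing singular cone, on which the Hardt--Simon analysis is unavailable; this is precisely what forces the use of the global perturbation of \cite{wangDeformationsSingularMinimal2020} and the singular-capacity invariant of Definition~\ref{Def_Sing Cap}.
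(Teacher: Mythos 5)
Your proposal is correct in outline and uses the same dichotomy as the paper: when $H_7(M,\ZZ_2)=0$ the statement is exactly Theorem \ref{Thm_Intro_Main}, and when $H_7(M,\ZZ_2)\neq 0$ one minimizes area in a nontrivial mod-$2$ class and removes the isolated singularities by a small metric perturbation. Where you differ is in how the second case is closed: you outsource it wholesale to N.~Smale's generic regularity theorem \cite{smaleGenericRegularityHomologically1993}, whereas the paper stays inside its own toolbox, applying the surgery Lemma \ref{lem:surgery} (stated for any minimal hypersurface with $\mathfrak{h}_{nm}=\emptyset$, which covers mod-$2$ area minimizers) to produce an arbitrarily close metric admitting a smooth \emph{non-degenerate} minimal hypersurface in the class, and then deriving openness of the set of good metrics from non-degeneracy plus White's structure theorem \cite{whiteSpaceMinimalSubmanifolds1991,whiteBumpyMetricsTheorem2017}. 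Your route is legitimate but imports two obligations the paper's route avoids. First, Smale's theorem concerns integral classes on an orientable manifold, and here you genuinely cannot reduce to that setting: $H_7(M,\ZZ_2)\neq 0$ does not produce a nonzero integral class, and $M$ need not be orientable (the paper deduces orientability only in the $H_7(M,\ZZ_2)=0$ case). Your claim that the adaptation is routine is defensible — at an isolated singular point of a mod-$2$ minimizer the tangent cone has a closed embedded cross-section in $S^7$, which is two-sided since $H^1(S^7,\ZZ_2)=0$, so the cone bounds and the Hardt--Simon analysis applies — but the cleaner move is simply to invoke Lemma \ref{lem:surgery}, which is already formulated in the mod-$2$/Caccioppoli setting. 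Second, ``generic'' requires openness as well as density: the Hardt--Simon-type perturbation only gives density, and your write-up does not say where openness (in the $C^\infty$ topology, moreover) comes from; the paper secures it by making the perturbed hypersurface non-degenerate and quoting White. Neither point is a fatal gap, but both need to be addressed for the argument to be complete.
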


    \begin{proof}
        If $H_7(M, \ZZ_2) \neq 0$, then we can take a nontrivial homology class $\alpha \in H_7(M, \ZZ_2)$. The standard Federer-Fleming minimizing process \cite{federerNormalIntegralCurrents1960} implies the existence of an area-minimizing multiplicity one minimal hypersurface $\Sigma \subset (M, g)$ with $[\Sigma] = \alpha$. In this case, by Lemma \ref{lem:surgery} below, one can show that there exists a metric $g'$ arbitrarily close to $g$ such that $(M, g')$ admits a smooth non-degenerate minimal hypersurface $\Sigma'$ close to $\Sigma$ and $[\Sigma'] = \alpha$ as well. The openness of the set of such $g'$'s follows immediately from the non-degenerateness and B. White's structure theorem \cite{whiteSpaceMinimalSubmanifolds1991,whiteBumpyMetricsTheorem2017}.

        The $H_7(M, \ZZ_2) = 0$ case is exactly the main theorem.
    \end{proof}

    Our study only deals with minimal hypersurfaces in a close Riemannian manifold, but in a general setting, one may also be interested in generic regularity of minimal submanifolds with codimension greater than $1$. In this direction, B. White \cite{whiteGenericRegularityUnoriented1985,whiteGenericTransversalityMinimal2019} has proved generic smoothness and embeddedness of minimizing integral $2$-cycles and J.D. Moore \cite{mooreBumpyMetricsClosed2006,mooreCorrectionBumpyMetrics2007} has shown generic nonexistence of branch points for parametrized minimal surfaces.

    Recently, N. Edelen \cite{edelenDegeneration7dimensionalMinimal2021} proved that in a closed Riemannian manifold $(M^8, g)$, the number of diffeomorphism classes of minimal hypersurfaces of bounded area and Morse index is finite. More precisely, he proved the existence of model minimal hypersurfaces such that others are bi-Lipschitz to these models. This suggests the possibility to describe the structure of singular minimal hypersurfaces as in B. White's structure theorem \cite{whiteSpaceMinimalSubmanifolds1991}, and inspires us to conjecture that the generic smoothness should hold for all minimal hypersurfaces with optimal regularity and finite Morse index. We expect to solve the $8$-dimensional case in the near future.

    \subsection{Some conventions}
        Throughout this paper, unless otherwise stated, a minimal hypersurface $\Sigma$ in an $(n+1)$-dimensional closed manifold $M$ is referred to \textit{a smooth, locally stable $n$-dimensional submanifold with locally bounded area and optimal regularity}, i.e. $\cH^n(\Sigma \cap K)<+\infty $ for every compact subset $K\subset M$, and $\cH^{n-2}(\overline{\Sigma}\setminus \Sigma) = 0$. These are exactly the minimal hypersurfaces generated from area minimizing arguments or Almgren-Pitts min-max theory (See \cite{pittsExistenceRegularityMinimal1981,schoenRegularityStableMinimal1981, liExistenceInfinitelyMany2019}). We always identify $\Sigma$ with the regular part of $\overline{\Sigma}$, and denote $\overline{\Sigma}\setminus \Sigma$ by $\Sing(\Sigma)$. 

        For a Caccioppoli set $A$, we shall identify $A$ with $\cH^{n+1}$-density $1$ part of $A$. We will use $\partial^t A$ to denote its topological boundary to avoid confusion with the boundary map defined in Subsection \ref{subsect:one-para}.

    \subsection{Sketch of Proof}
        We define 
        \begin{align*}
            \scG &:= \set{C^k \text{ Riemannian metrics on } M}\,;\\
            \scG_F &:= \set{g \in \scG| (M, g) \text{ has Frankel property}}\,;\\
            \scG_{NF} &:= \scG\backslash \scG_F\,;\\
            \scR &:= \set{g \in \scG| (M, g) \text{ admits a nondegenerate smooth minimal hypersurface}}\,.
        \end{align*}
        To prove our main theorem, it suffices to show that $\scR$ is open and dense in $\scG$. As one will see later, the most difficult part is to justify the denseness of $\scR$ in the interior of $\scG_F$, i.e., $\mathrm{int}(\scG_F)$.
        
        To prove this density, we introduce in general a geometric invariant, \textbf{singular capacity}, denoted by $\SCap$, on the space of $8$-dimensional Riemannian manifolds paired with a locally stable minimal hypersurface. Roughly speaking, this invariant counts how many singularities ``potentially'' a minimal hypersurface contains. The key of this invariant is its upper semi-continuity:
        \[
           \SCap(\Sigma; U, g) \geq \limsup_{j\to \infty} \SCap(\Sigma_j; U, g_j)   \]
        where $g_j \rightarrow g$ in $C^k_{\mathrm{loc}}$ and $\Sigma_j \rightarrow \Sigma$ in the varifold sense with some technical assumptions.

        With the notion of singular capacity, the proof can be decomposed into three steps.\\
        
        \paragraph{\textbf{Step 1}} 
            For every $g\in \mathrm{int}(\scG_F)$, let $V = \kappa|\Sigma| \in \cR$ be a $1$-width min-max integral varifold generated from an ONVP sweepout. One can observe the following dichotomy:
            \begin{itemize}
                \item either $\mathfrak{h}_{nm}(\Sigma) = \emptyset$;
                \item or $V = |\Sigma|$ and $\Sigma$ is connected, two-sided and separating with isolated singular points $\mathrm{Sing}(\Sigma)=\{p_0, p_1, ..., p_k\}$.
            \end{itemize}

            Let's consider the latter case. By constructing a metric perturbation by hand, we may further assume that $\Sigma$ is the unique and non-degenerate one realizing $1$-width provided that $\cH^0(\mathfrak{h}_{nm}(\Sigma)) = 1$. Let $\nu$ denote a unit normal on $\Sigma$.\\

        \paragraph{\textbf{Step 2}} 
            Let $f \in \scF \subset C^\infty_c(M - \mathrm{Sing}(\Sigma))$ be a generically chosen function with $\nu(f)|_{\Sigma} \not\equiv 0$. Let the metric perturbation be $g_t:= g(1+ tf)$ and $\Sigma_t$ be a $1$-width min-max minimal hypersurfaces in $(M, g_t)$. 

            It follows immediately from the uniqueness of $\Sigma$ that $\Sigma_t \rightarrow \Sigma$ in the varifold sense. By analyzing the associated Jacobi field $u\cdot\nu$ on $\Sigma$ generated by $\set{\Sigma_t}$ as in \cite[Section~4]{wangDeformationsSingularMinimal2020}, we can show that at some $p \in \mathrm{Sing}(\Sigma)$, the asymptotic rate of $u$ (See Subsection \ref{subsect:Jaccobi}) is either $\gamma^+_1(C_p)$ or $\gamma^-_1(C_p)$. This implies that $\Sigma_t$ is smooth in a small neighborhood $U_p$ of $p$ for $t$ close to $0$.
            
            A naive induction argument based on the $\cH^0(\Sing)$ would not work, because in a neighborhood $U_q$ of another singular point $q \in \overline \Sigma$, $\Sigma_t$ might have more than $1$ singular point inside. Fortunately, by upper semi-continuity of the singular capacity, we can conclude that 
            \[
                \SCap(\Sigma_{t_j}; M, g_{t_j}) \leq \SCap(\Sigma; M, g) - 1\,.   
            \]\\

        \paragraph{\textbf{Step 3}} 
            Iterate Step 1 and Step 2, and then a backward induction argument would lead to the following dichotomy. For some $\tilde g \in \mathrm{int}(\scG_F)$ arbitrarily close to $g$, we have a minimal hypersurface $\Sigma \subset (M, \tilde g)$ satisfying
            \begin{itemize}
              \item either $\mathfrak{h}_{nm}(\Sigma) = \emptyset$;
              \item or $\SCap(\Sigma) = 0$.
            \end{itemize}

            In the former case, Hardt-Simon type metric perturbation would lead to a smooth minimal hypersurface as in \cite{chodoshSingularBehaviorGeneric2020}; In the latter case, by definition, $\Sigma$ itself is smooth. \\

        In summary, we conclude the main theorem in $\mathrm{int}(\mathscr{G}_F)$.

\section*{Acknowledements}
We are grateful to our advisor Fernando Cod\'a Marques for his constant support. The second author would like to thank Xin Zhou for introducing this problem to him and his interest in this work. We would also thank Otis Chodosh, Yevgeny Liokumovich and Luca Spolaor for having a discussion and sharing their ideas on \cite{chodoshSingularBehaviorGeneric2020} with us.

\section{Preliminaries}
    We mainly focus on a closed Riemannian manifold $(M^{n+1}, g)$ with $H_n(M, \ZZ_2) = 0$. In this case, by Poincaré duality, its first cohomology group with $\ZZ_2$ coefficients $H^1(M, \ZZ_2) = 0$. Therefore, the Stiefel-Whitney class $\omega_1(M)$ vanishes and $M$ is orientable.

    Let's first list some notations in geometric measure theory and Almgren-Pitts' min-max theory. Interested readers may refer to L. Simon's lecture notes \cite{simonLecturesGeometricMeasure1984} and J. Pitts' monograph \cite{pittsExistenceRegularityMinimal1981}.
    
    \begin{itemize}
        \item $\cH^n$: $n$-dimensional Housdorff measure;
        \item $\mbfI_k(M;\ZZ_2)$: the space of $k$-dimensional mod $2$ flat chains in $M$;
        \item $\cZ_k(M;\ZZ_2)$: the space of $k$-dimensional mod $2$ flat cycles in $M$;
        \item $\mbfM_U$: the mass norm in an open subset $U$ on the flat chain space ($\mbfM_M$ abbreviated to $\mbfM$);
        \item $\cF$: the flat metric on the flat chain space defined by 
        \[\cF(S,T) = \inf\set{\mbfM(R) + \mbfM(P): S - T = \partial R + P}\,,\]
        where $S, T, P \in \mbfI_k(M; \ZZ_2)$ and $R \in \mbfI_{k+1}(M; \ZZ_2)$. 
        We always assume that $\mbfI_n(M; \ZZ_2)$ is endowed with $\cF$ metric, and so is $\cZ_n(M; \ZZ_2)$;
        \item $\cC(M)$: the space of Caccioppoli sets in $M$, i.e., subsets of $M$ of finite perimeter. The metric on $\cC(M)$ is the metric induced by the symmetric difference, i.e., for a pair of sets $X, Y \in \cC(M)$,
            \begin{equation*}
                d_{\cC(M)}(X, Y) = \cH^{n+1}(X\Delta Y)\,.
            \end{equation*}
        \item $\cV_n(M)$: the space of $n$-dimensional varifolds in $M$;
        \item $\IV_n(M)$: the space of $n$-dimensional integral varifolds in $M$;
        \item $\cR(M)$: the space of $n$-dimensional integral varifold in $M$ whose support is regular away from a closed singular set of Hausdorff dimension $\leq n - 7$;
        \item $|T|$: the associated integral varifold for $T\in \mbfI_n(M; \ZZ_2)$;
        \item $\|T\|$: the associated Radon measure for $T \in \mbfI_n(M; \ZZ_2)$;
        \item $\mbfF$: the metric on $\cV_n(M)$ defined by 
            \[
                \mbfF(V, W) = \sup\set{\|V\|(f) - \|W\|(f): f \in C^1(M), |f|\leq 1, |Df|\leq 1}\,.
            \]
            Moreover, $\mbfF_U(V, W) := \mbfF(V_1\llcorner U, V_2\llcorner U)$ for any open subset $U$;
        \item $\|V\|$: the associated Radon measure for $V \in \IV_n(M)$.
    \end{itemize}

    \subsection{Caccioppoli Sets}

        In this subsection, we shall briefly revisit Caccioppoli sets, i.e., sets of finite perimeter in the Riemannian manifold setting. More details in the Euclidean setting can be found in L. Simon's lecture notes \cite{simonLecturesGeometricMeasure1984} and multiple monographs written by H. Federer \cite[Chap. 4]{federerGeometricMeasureTheory1969}, F. Lin and X. Yang \cite[Chap. 5]{linGeometricMeasureTheory2002}, F. Maggi \cite{maggiSetsFinitePerimeter2012a}, L.C. Evans and R.F. Gariepy \cite{evansMeasureTheoryFine2015}, E. Giusti \cite{giustiMinimalSurfacesFunctions1984}, etc., whose statements can be easily extended to our setting.

        \begin{Def}
            A Lebsgue measurable set $E \subset (M^{n+1}, g)$ is \textbf{Caccioppoli} (have finite perimeter) if its charateristic function $\chi_E \in \BV(M)$. In other words, $\chi_E \in L^1(M)$ has bounded variation, i.e., 
            \begin{equation}
                \|D_g\chi_E\|(M) := \sup\set{\int_E \operatorname{div}_g X \mathrm{d}x: X \in \Gamma^1(TM), |X|_g \leq 1} < \infty\,.
            \end{equation}
            Note here $D_g\chi_E: \Gamma^1(TM) \rightarrow \mathbb{R}$ is the associated functional with $E$. Two sets will be identified if they differ by a measure zero set and the collection of all the Caccioppoli sets are denoted by $\mathcal{C}(M)$.
        \end{Def}
        \begin{Rem}\label{rem:equiv}
            It is easy to verify that $\cC(M)$ is independent of the choice of $C^\infty$ metrics on $M$. 
        \end{Rem}

        Since $M$ is assumed to be orientable, we can take a unique $(n+1)$-vector $\xi$ in $\Lambda^{n+1} TM$ so that at each point $p \in M$, $\xi_p$ is the wedge product of an orthonormal basis of $T_pM$. We will use $\cL^{n+1}$ to denote the Lebesgue measure on $M$ and $\mathbf{E}^{n+1}$ to denote the current $\cL^{n+1}\wedge\xi$. By definition, a set $E \in \cC(M)$ is equivalent to $\mathbf{E}^{n+1}\llcorner E \in \mbfI_{n+1}(M;\ZZ_2)$.

        \begin{Def}
            Given $E \in \cC(M)$, by Riesz's representation theorem, there is a unique $TM$-valued Radon measure $\mu_E$ so that
            \begin{equation}
                \int_E \operatorname{div}_g X = \int_M X \cdot_g \mathrm{d}\mu_E, \quad \forall X \in \Gamma^1(TM)\,.
            \end{equation}
            Moreover, $|\mu_E|(M)$ is finite.
            $\mu_E$ is called the \textbf{Gauss-Green measure} of $E$. And we define the \textbf{relative perimeter} of $E$ in $F \subset M$, and the \textbf{perimeter} of $E$ to be
            \begin{equation}
                P(E; F) = |\mu_E|(F)\,, \quad P(E) = |\mu_E|(M)\,.
            \end{equation}
        \end{Def}
        \begin{Rem}
            If $E \subset (M^{n+1}, g)$ is Caccioppoli, then so is $M \setminus E$. Furthermore, 
            \begin{equation}
                \mu_{M \setminus E} = -\mu_E\,, \quad P(E) = P(M \setminus E)\,.
            \end{equation}
        \end{Rem}

        \begin{Def}
            Given $E \subset (M^{n+1}, g)$ and $x \in M$, if the limit
            \begin{equation}
                \theta_{n+1}(E)(x) = \lim_{r \rightarrow 0^+} \frac{|E \cap B_g(x, r)|}{|B_g(x,r)|}
            \end{equation}
            exists, then $\theta_{n+1}(E)(x)$ is called the \textbf{$(n+1)$-dimensional density of $E$ at $x$}. Given $t \in [0, 1]$, the \textbf{set of points of density $t$ of $E$} is defined to be
            \begin{equation}
                E^{(t)} := \set{x \in M: \theta_{n+1}(E)(x) = t}\,.
            \end{equation}
        \end{Def}
        
        \begin{Def}
            Given $E \in \cC(M)$, the \textbf{reduced boundary} $\partial^* E$ is the set of those $x \in \supp \mu_E$ such that
            \begin{equation}
                \nu_E = \lim_{r \rightarrow 0^+} \frac{\mu_E(B(x, r))}{|\mu_E|(B(x, r))} \quad \text{exists and belongs to } S^{n} \,.
            \end{equation}
            Here $\nu_E$ is called the \textbf{measure-theoretic outer unit normal} to $E$.
        \end{Def}
        \begin{Rem}
            By Lebesgue-Besicovitch differentiation theorem, $\mu_E = \nu_E |\mu_E|\llcorner \partial^* E$, where $\nu_E$ is a $|\mu_E|$ measurable vector field.
        \end{Rem}
        \begin{Rem}\label{rem:red_topo_bdry}
            One can check that $\partial^* E \subset \overline{\partial^* E} = \supp \mu_E \subset \partial^t E$, where the last one is the topological boundary of $E$. Up to modification on sets of measure zero, we also have
            \begin{equation}
                \overline{\partial^* E} = \partial^t E\,.
            \end{equation}

            A Caccioppoli set is in fact an equivalence class of sets differed by a measure zero set, so for convenience, we will only consider all the points of density $1$ which satisfies the inequality above for each Caccioppoli set.
        \end{Rem}

\iffalse
        In 1955, E. De Giorgi proved the following important structure theorem {\color{red} cite: Ennio De Girogi, Nuovi teoremi relativi alle misure (r-1)-dimesionali in uno spazio ad r dimensioni}, which indicates that the approximate tangent space at a point in the reduced boundary is determined by the corresponding outer unit normal (See \cite[Theorem~15.5]{maggiSetsFinitePerimeter2012a} or \cite[Theorem~14.3]{simonLecturesGeometricMeasure1984} for a proof in the Euclidean setting).

        \begin{Thm}\label{thm:DeGio}
            Suppose $E \in \mathcal{C}(M)$. Then $\partial^* E$ is countably $n$-rectifiable and $|\mu_E| = \mathcal{H}^n \llcorner \partial^* E$.

            At each point $x \in \partial^* E$, by identifying $T_xM$ with $\mathbb{R}^{n+1}$ via an orthonormal basis at $x$, we have as $r \rightarrow 0^+$,
            \begin{equation}
                E_{x, \lambda} := ((\exp_x)^{-1}E, \lambda^{-2} (\exp_x)^* g) \rightarrow \set{y \in \mathbb{R}^{n+1}: y \cdot \nu_E(x) \leq 0}
            \end{equation}
            in the $L^1_{\mathrm{loc}}(\mathbb{R}^{n+1})$ sense.
            
            Similarly, if let $\pi_x$ denote $\nu_E(x)^\perp$, then as $r \rightarrow 0^+$,
            \begin{equation}
                \mu_{E_{x, r}} \overset{\ast}{\rightharpoonup} \nu_E(x)\mathcal{H}^n\llcorner\pi_x\,, \quad |\mu_{E_{x,r}}| \overset{\ast}{\rightharpoonup} \mathcal{H}^n\llcorner\pi_x\,.
            \end{equation}
        \end{Thm}
\fi

        \begin{Def}
            For a set $E \subset (M^{n+1}, g)$, we define its \textbf{essential boundary} $\partial^e E$ to be
            \begin{equation}
                \partial^e E = M \setminus (E^{(0)} \cup E^{(1)})\,.
            \end{equation}
        \end{Def}

        In the monograph, H. Federer gave a criterion for Caccioppoli sets {\cite[Theorem~4.5.11]{federerGeometricMeasureTheory1969}}, and here we adapt it to the Riemannian manifold setting.

        \begin{Thm}
            For $E \subset (M^{n+1}, g)$, the following two conditions are equivalent:
            \begin{enumerate}
                \item $E$ is Lebesgue measurable and $\partial(\mathbf{E}^{n+1} \llcorner E)$ is representable by integration, i.e., $E$ is a Caccioppoli set.
                \item $\mathscr{I}^n\left(\partial^e E\right) < \infty$.
            \end{enumerate}
        \end{Thm}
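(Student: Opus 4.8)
The plan is to reduce the statement to Federer's Euclidean criterion \cite[Theorem~4.5.11]{federerGeometricMeasureTheory1969} by localizing in coordinate charts, exploiting that both conditions are local and transform compatibly under bi-Lipschitz coordinate changes. First I would fix a finite atlas $\{(U_i,\varphi_i)\}_{i=1}^N$ of $M$ with $\varphi_i\colon U_i\to V_i\subset\RR^{n+1}$, together with an open cover $\{U_i'\}$ of $M$ such that each $\overline{U_i'}$ is a compact subset of $U_i$; since $g$ is continuous, each restriction $\varphi_i|_{U_i'}$ is a bi-Lipschitz homeomorphism onto its image when $V_i$ carries the Euclidean metric.

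On the Caccioppoli side, writing the perimeter through the dual definition $\sup\int_E\operatorname{div}_gX$ and using a partition of unity subordinate to $\{U_i'\}$, one checks that $E$ is Caccioppoli in $(M,g)$ if and only if $E\cap U_i'$ has finite perimeter in $(U_i',g)$ for every $i$; by Remark~\ref{rem:equiv} and the standard fact that bi-Lipschitz homeomorphisms carry $\BV$ functions to $\BV$ functions, this is equivalent to $\varphi_i(E\cap U_i')$ having finite perimeter in $V_i$ for every $i$. On the boundary side, because $\varphi_i$ distorts $\cL^{n+1}$ by a factor bounded above and below on $U_i'$, the density sets $E^{(0)}$ and $E^{(1)}$, hence also $\partial^eE$, are carried onto the corresponding sets of $\varphi_i(E\cap U_i')$; in particular $\varphi_i(\partial^eE\cap U_i')=\partial^e\big(\varphi_i(E\cap U_i')\big)$ locally near each point of $\partial^eE$. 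Granting that $\mathscr{I}^n(\partial^eE)<\infty$ is equivalent to $\mathscr{I}^n\big(\partial^e(\varphi_i(E\cap U_i'))\big)<\infty$ for all $i$ with respect to the Euclidean metric, Federer's theorem applied in each $V_i$ closes the argument in both directions.

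The main obstacle is precisely this last equivalence, i.e. the behavior of the integral-geometric measure under the nonlinear charts $\varphi_i$: unlike $\cH^n$, the measure $\mathscr{I}^n$ is built from linear orthogonal projections, which do not commute with the $\varphi_i$, so its finiteness is not preserved under bi-Lipschitz maps by an elementary estimate. I would resolve this through the structure theory for sets of finite integral-geometric measure: by the Besicovitch-Federer projection theorem \cite[\S3.3]{federerGeometricMeasureTheory1969}, a set $A\subset\RR^{n+1}$ with $\mathscr{I}^n(A)<\infty$ decomposes as $A=R\cup Z$, with $R$ countably $n$-rectifiable of finite $\cH^n$ measure and $Z$ purely $n$-unrectifiable with $\mathscr{I}^n(Z)=0$; rectifiability and pure unrectifiability are bi-Lipschitz invariant, $\cH^n$ transforms with bounded distortion on the rectifiable part, purely unrectifiable sets of $\sigma$-finite $\cH^n$ measure remain $\mathscr{I}^n$-null, and $\mathscr{I}^n\le\cH^n$ always, so $\mathscr{I}^n$-finiteness transfers across the $\varphi_i$ in both directions. (Alternatively, one can bypass this point by re-running Federer's one-dimensional slicing argument directly inside a chart, where $\varphi_i^*g$ is a bounded perturbation of the flat metric.) Once this is in place, the implication ``$E$ Caccioppoli $\Rightarrow\mathscr{I}^n(\partial^eE)<\infty$'' in fact follows more cheaply from the Riemannian De Giorgi structure theorem together with $\cH^n(\partial^eE\setminus\partial^*E)=0$, both of which localize to charts without any projection subtlety since they involve only $\cH^n$; this even gives $\cH^n(\partial^eE)=P(E)<\infty$.
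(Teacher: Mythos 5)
The paper does not actually prove this statement: it is quoted as Federer's criterion \cite[Theorem~4.5.11]{federerGeometricMeasureTheory1969}, ``adapted'' to the Riemannian setting by declaring, via a Nash embedding $M\subset\RR^N$, that $\mathscr{I}^n$ means the integral-geometric measure of $\RR^N$ (see the remark following the theorem). So you are supplying a proof where the paper supplies a citation, and your easy direction is fine: De Giorgi's structure theorem plus $\cH^n(\partial^e E\setminus\partial^* E)=0$ give $\cH^n(\partial^e E)=P(E)<\infty$, and $\mathscr{I}^n\leq c\,\cH^n$ (up to a normalization constant) finishes, in whichever ambient space $\mathscr{I}^n$ is computed.

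The hard direction, however, has a genuine gap at exactly the point you flagged as the main obstacle. Your chart reduction needs that $\mathscr{I}^n$-finiteness of $\partial^e E$ (computed in $\RR^N$, per the paper's definition) forces Euclidean $\mathscr{I}^n$-finiteness of $\varphi_i(\partial^e E\cap U_i')$ in $\RR^{n+1}$, and the structure-theoretic fix does not close this. Even granting a decomposition $A=R\cup Z$ with $R$ countably $n$-rectifiable of finite $\cH^n$ measure and $\mathscr{I}^n(Z)=0$ --- which is not the Besicovitch--Federer projection theorem as usually stated (that theorem assumes $\cH^n$ $\sigma$-finiteness), so it needs its own precise citation --- the bad part $Z$ need not have $\sigma$-finite $\cH^n$ measure. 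Hence neither Besicovitch--Federer nor bi-Lipschitz invariance of pure unrectifiability tells you that $\varphi_i(Z)$ is still $\mathscr{I}^n$-null: for Suslin sets, $\mathscr{I}^n(Z)=0$ means almost every linear projection of $Z$ is Lebesgue null, and whether this ``invisibility'' survives a nonlinear chart is precisely the original difficulty; nothing you invoke establishes it. The parenthetical alternative (re-running Federer's slicing inside a chart) begs the same question, since the slicing argument requires the finiteness hypothesis already transported to the chart's Euclidean structure. Two honest repairs: (i) run Federer's slicing directly in $\RR^N$, using the multiplicity-function representation $\mathscr{I}^n(A)=c\int\!\!\int N(p|A,y)\,\mathrm{d}\cL^n y\,\mathrm{d}\gamma(p)$ on Suslin sets, with the slices $M\cap p^{-1}(y)$ (generically curves in $M$) playing the role of Federer's lines; or (ii) note that for the paper's only application (Corollary \ref{cor:sep_cacc}) the essential boundary is $\cH^n$-finite, so the $\cH^n$-version of the criterion suffices, and for that version your chart argument goes through verbatim, since $\cH^n$, unlike $\mathscr{I}^n$, transforms with bounded distortion under bi-Lipschitz maps.
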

        \begin{Rem}
            By Nash embedding theorem, we can take a large integer $N$ such that $(M, g)$ can be isometrically embedded in $\mathbb{R}^N$. Then $\mathscr{I}^n$ denotes the \textbf{$n$ dimensional integral geometric measure} over $\mathbb{R}^N$ (Interested readers may refer to \cite[2.10.5, 2.10.15, 2.10.16]{federerGeometricMeasureTheory1969} or \cite[2.4]{morganGeometricMeasureTheory2016} for more details).
        \end{Rem}

        \begin{Cor}\label{cor:sep_cacc}
            Suppose that a minimal hypersurface $\Sigma \subset (M, g)$ separates $M$ into two open connected components $M_+$ and $M_-$, and then both $M_+$ and $M_-$ are Caccioppoli sets.
        \end{Cor}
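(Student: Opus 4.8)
The plan is to reduce the statement to Federer's criterion recalled just above, namely that a Lebesgue measurable $E \subset (M,g)$ is Caccioppoli if and only if $\mathscr{I}^n(\partial^e E) < \infty$. So it suffices to show that the essential boundaries $\partial^e M_\pm$ have finite $n$-dimensional integral geometric measure. First I would identify these essential boundaries with (a subset of) $\overline{\Sigma}$. Since $\Sigma$ separates $M$ into the two open connected components $M_+$ and $M_-$ with $M = M_+ \sqcup \Sigma \sqcup M_-$ (as a disjoint union, after identifying $\Sigma$ with the regular part of its closure and noting $M_\pm$ are open), every point of $M_+$ is an interior point of $M_+$, hence has density $1$ for $M_+$, and every point of $M_-$ has density $0$ for $M_+$; therefore $\partial^e M_+ \subset \overline{\Sigma}$, and symmetrically $\partial^e M_- \subset \overline{\Sigma}$.

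Next I would bound $\mathscr{I}^n(\overline{\Sigma})$. Write $\overline{\Sigma} = \Sigma \cup \Sing(\Sigma)$. By the conventions fixed in the paper, $\Sigma$ is a smooth $n$-dimensional submanifold with locally bounded area, so since $M$ is closed, $\cH^n(\Sigma) = \cH^n(\overline{\Sigma} \cap M) < \infty$; and $\Sing(\Sigma) = \overline{\Sigma}\setminus\Sigma$ satisfies $\cH^{n-2}(\Sing(\Sigma)) = 0$, in particular $\cH^n(\Sing(\Sigma)) = 0$. Hence $\cH^n(\overline{\Sigma}) < \infty$. Now I invoke the comparison between the integral geometric measure and Hausdorff measure: after isometrically embedding $(M,g) \hookrightarrow \mathbb{R}^N$ via Nash, one has $\mathscr{I}^n(A) \leq c(n,N)\, \cH^n(A)$ for every $A \subset \mathbb{R}^N$ (see \cite[2.10.35]{federerGeometricMeasureTheory1969}; this is the easy inequality, the reverse being the deep structure-theoretic direction). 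Applying this to $A = \partial^e M_\pm \subset \overline{\Sigma}$ gives $\mathscr{I}^n(\partial^e M_\pm) \leq c(n,N)\,\cH^n(\overline{\Sigma}) < \infty$.

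Finally, both $M_+$ and $M_-$ are clearly Lebesgue measurable (they are open), so Federer's criterion applies and yields that $M_+$ and $M_-$ are Caccioppoli sets, which is the claim.

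I do not expect a serious obstacle here; the only point that needs a little care is the inclusion $\partial^e M_\pm \subset \overline{\Sigma}$, which uses that $M_\pm$ are \emph{open} and that their topological boundaries are contained in $\overline{\Sigma}$ (a point outside $\overline{\Sigma}$ lies in $\Sigma$'s complement, an open set, hence in the interior of exactly one of $M_+$, $M_-$, giving density $1$ there and density $0$ for the other). One should also note that the finiteness $\cH^n(\overline\Sigma)<\infty$ is exactly what the ``locally bounded area and optimal regularity'' convention is designed to provide once $M$ is compact, so no extra hypothesis is needed. If one prefers to avoid the $\mathscr{I}^n \leq c\,\cH^n$ comparison, an alternative is to argue directly that $\Sigma$, being a smooth minimal (in particular stationary) hypersurface of finite area, has locally finite perimeter as a current and that $M_\pm$ inherit finite perimeter from $\partial^t[\![M_+]\!] = -\partial^t[\![M_-]\!]$ being supported on $\overline\Sigma$ with finite mass; but the route through Federer's stated criterion is the most economical given what has been set up in the excerpt.
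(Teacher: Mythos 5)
Your argument is correct and follows essentially the same route as the paper's proof: reduce to Federer's criterion, observe that $\partial^e M_\pm$ is contained in $\overline{\Sigma}$ (the paper writes $\partial^e M_+ \approx \Sigma$ using that $\Sigma$ is smooth away from a set of $\cH^n$-measure zero), and conclude from $\cH^n(\Sigma) < \infty$ that $\mathscr{I}^n(\partial^e M_\pm) < \infty$. Your extra care in invoking the comparison $\mathscr{I}^n \leq c(n,N)\,\cH^n$ rather than the equality the paper asserts is a harmless (indeed slightly safer) refinement of the same argument.
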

        \begin{proof}
            It suffices to check that $\mathcal{I}^n\left(\partial^e M_\pm\right) < \infty$ and we only focus on $M_+$. By definition, we have $M_+ \subset M^{(1)}_+$ and $M_- \subset M^{(0)}_+$. Since $\Sigma$ is smooth almost everywhere, we can conlude that $\partial^e M_+ \approx \Sigma$, and $\mathscr{I}^n\left(\partial^e E\right) = \mathcal{H}^n(\Sigma) < \infty$. Hence, $M_+$ is a Caccioppoli set.
        \end{proof}

\iffalse
        \begin{Thm}[{\cite[Theorem 16.2]{maggiSetsFinitePerimeter2012a}} {\color{red}{Federer's theorem}}]\label{Thm:fed}
            For $E \in \cC(M)$, $\partial^* E \subset E^{(1/2)} \subset \partial^e E$ with $\cH(\partial^e E \setminus \partial^* E) = 0$.
        \end{Thm}
\fi

        Before concluding this subsection, we state an important theorem showing that $\cC(M)$ is close under set operations and characterizing the Gauss-Green measure of set operations.

        \begin{Thm}[{\cite[Theorem~16.3]{maggiSetsFinitePerimeter2012a}}]\label{thm:char}
            Given $E, F \in \cC(M)$, let
            \begin{align*}
                \set{\nu_E = \nu_F} &= \set{x \in \partial^*E \cap \partial^*F: \nu_E(x) = \nu_F(x)}\,,\\
                \set{\nu_E = -\nu_F} &= \set{x \in \partial^*E \cap \partial^*F: \nu_E(x) = -\nu_F(x)}\,.
            \end{align*}

            Then $E \cap F$, $E \setminus F$ and $E \cup F$ are all Caccioppoli, with
            \begin{align*}
                \mu_{E \cap F} &= \mu_E\llcorner F^{(1)} + \mu_F \llcorner E^{(1)} + \nu_E \mathcal{H}^n \llcorner\set{\nu_E = \nu_F}\,,\\
                \mu_{E \setminus F} &= \mu_E\llcorner F^{(0)} - \mu_F \llcorner E^{(1)} + \nu_E \mathcal{H}^n \llcorner\set{\nu_E = -\nu_F}\,,\\
                \mu_{E \cup F} &= \mu_E\llcorner F^{(0)} + \mu_F \llcorner E^{(0)} + \nu_E \mathcal{H}^n \llcorner\set{\nu_E = \nu_F}\,,
            \end{align*}
            and in the measure-theoretic sense,
            \begin{align*}
                \partial^*(E \cap F) &\approx (F^{(1)}\cap \partial^*E) \cup (E^{(1)}\cap \partial^*F) \cup \set{\nu_E = \nu_F}\,,\\
                \partial^*(E \setminus F) &\approx (F^{(0)}\cap \partial^*E) \cup (E^{(1)}\cap \partial^*F) \cup \set{\nu_E = -\nu_F}\,,\\
                \partial^*(E \cup F) &\approx (F^{(0)}\cap \partial^*E) \cup (E^{(0)}\cap \partial^*F) \cup \set{\nu_E = \nu_F}\,.
            \end{align*}

            Moreover, for every borel set $G \subset M$,
            \begin{align*}
                P(E \cap F; G) &= P(E; F^{(1)}\cap G) + P(F; E^{(1)} \cap G) + \cH^{n}(\set{\nu_E = \nu_F}\cap G)\,,\\
                P(E \setminus F; G) &= P(E; F^{(0)}\cap G) + P(F; E^{(1)} \cap G) + \cH^{n}(\set{\nu_E = -\nu_F}\cap G)\,,\\
                P(E \cup F; G) &= P(E; F^{(0)}\cap G) + P(F; E^{(0)} \cap G) + \cH^{n}(\set{\nu_E = \nu_F}\cap G)\,.
            \end{align*}
        \end{Thm}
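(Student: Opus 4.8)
The plan is to reduce the three set operations to the single operation $E\cap F$ and then carry out an infinitesimal (blow-up) analysis of $\chi_{E\cap F}=\chi_E\chi_F$; the passage from $\RR^{n+1}$ to $(M^{n+1},g)$ is cosmetic, since everything is computed in geodesic normal coordinates where $g=\delta+O(r^2)$, so $g$-densities, $g$-blow-ups and $\cH^n_g$ agree to first order with their Euclidean counterparts (and ``$\cH^n$-null'' is metric independent). First I would record that $\chi_{M\setminus F}=1-\chi_F\in\BV(M)$, so $M\setminus F\in\cC(M)$ with $\mu_{M\setminus F}=-\mu_F$, $\partial^*(M\setminus F)=\partial^*F$, $\nu_{M\setminus F}=-\nu_F$, and $(M\setminus F)^{(t)}=F^{(1-t)}$; likewise for $E$. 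Hence $E\setminus F=E\cap(M\setminus F)$ and $E\cup F=M\setminus\big((M\setminus E)\cap(M\setminus F)\big)$, so it suffices to prove the assertion for $E\cap F$, after which the $E\setminus F$ identities follow by the dictionary $\{\nu_E=\nu_{M\setminus F}\}=\{\nu_E=-\nu_F\}$, $(M\setminus F)^{(1)}=F^{(0)}$, and the $E\cup F$ identities by one further complementation. (Alternatively one could cover $M$ by finitely many charts, invoke Maggi's Euclidean Theorem~16.3 \cite[Theorem~16.3]{maggiSetsFinitePerimeter2012a} in each, and glue; the blow-up route below is essentially the proof of that theorem carried out directly.)

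For $E\cap F$ I would use two standard structural facts, transferred to $(M,g)$ as above: (i) De Giorgi's structure theorem, $\partial^*E$ is countably $n$-rectifiable, $|\mu_E|=\cH^n\llcorner\partial^*E$, and $\mu_E=\nu_E\,\cH^n\llcorner\partial^*E$ (combining the Lebesgue--Besicovitch representation recorded above with $|\mu_E|=\cH^n\llcorner\partial^*E$, cf.\ \cite{maggiSetsFinitePerimeter2012a, simonLecturesGeometricMeasure1984}); and (ii) Federer's theorem $\partial^*E\subset E^{(1/2)}\subset\partial^eE$ with $\cH^n(\partial^eE\setminus\partial^*E)=0$, together with the Federer criterion recorded above, $E\in\cC(M)\iff\cH^n(\partial^eE)<\infty$. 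Up to $\cL^{n+1}$-null and $\cH^n$-null sets, $M$ is then partitioned into the pieces $E^{(0)}\cup F^{(0)}$, $E^{(1)}\cap F^{(1)}$, $E^{(1)}\cap\partial^*F$, $\partial^*E\cap F^{(1)}$, and $\partial^*E\cap\partial^*F$, the last split according to whether $\nu_E=\nu_F$, $\nu_E=-\nu_F$, or neither.

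The one genuinely analytic point is the lemma $\cH^n\big(\{x\in\partial^*E\cap\partial^*F:\nu_E(x)\neq\pm\nu_F(x)\}\big)=0$, which I would prove by observing that $\cH^n$-a.e.\ point of the rectifiable set $\Gamma:=\partial^*E\cap\partial^*F$ is a point of density one of $\Gamma$ (Lebesgue--Besicovitch for $\cH^n\llcorner\Gamma$), hence a point of positive upper $\cH^n$-density of both $\partial^*E$ and $\partial^*F$, so the approximate tangent planes of $\partial^*E$ and of $\partial^*F$ coincide there, i.e.\ $\nu_E=\pm\nu_F$. Granting this, a routine blow-up identifies, for $\cL^{n+1}$- resp.\ $\cH^n$-a.e.\ point, the behavior of $E\cap F$: density $1$ on $E^{(1)}\cap F^{(1)}$; density $0$ on $E^{(0)}\cup F^{(0)}$ and on $\{x\in\partial^*E\cap\partial^*F:\nu_E(x)=-\nu_F(x)\}$ (there the two half-space blow-ups meet in a hyperplane); and $x\in\partial^*(E\cap F)$ with $\nu_{E\cap F}(x)$ equal to $\nu_F(x)$ on $E^{(1)}\cap\partial^*F$, to $\nu_E(x)$ on $\partial^*E\cap F^{(1)}$, and to $\nu_E(x)=\nu_F(x)$ on $\{\nu_E=\nu_F\}$. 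Off these pieces the density of $E\cap F$ is $\cH^n$-a.e.\ $0$ or $1$, so $\partial^e(E\cap F)\subset\partial^*E\cup\partial^*F$ up to an $\cH^n$-null set; in particular $\cH^n(\partial^e(E\cap F))<\infty$, whence $E\cap F\in\cC(M)$ by the Federer criterion, and the claimed description of $\partial^*(E\cap F)$ holds.

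Finally I would assemble the measures: by (i), $\mu_{E\cap F}=\nu_{E\cap F}\,\cH^n\llcorner\partial^*(E\cap F)$, and substituting the three-piece description of $\partial^*(E\cap F)$ with the values of $\nu_{E\cap F}$ above gives
\[
\mu_{E\cap F}=\mu_E\llcorner F^{(1)}+\mu_F\llcorner E^{(1)}+\nu_E\,\cH^n\llcorner\{\nu_E=\nu_F\}\,,
\]
using $\nu_E\,\cH^n\llcorner(\partial^*E\cap F^{(1)})=\mu_E\llcorner F^{(1)}$ and symmetrically for $F$. The three summands are mutually singular, since $\partial^*E\cap F^{(1)}$, $E^{(1)}\cap\partial^*F$, $\{\nu_E=\nu_F\}$ are pairwise disjoint (the $E$- or $F$-density separates them), so taking total variations over a Borel set $G$ yields $P(E\cap F;G)=P(E;F^{(1)}\cap G)+P(F;E^{(1)}\cap G)+\cH^n(\{\nu_E=\nu_F\}\cap G)$; the $E\setminus F$ and $E\cup F$ identities follow by the complementation dictionary. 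The main obstacle is the tangent-plane lemma of the third paragraph together with the $\cH^n$- and $\cL^{n+1}$-a.e.\ bookkeeping that guarantees $\partial^e(E\cap F)$ carries no extra mass; the Riemannian-versus-Euclidean issue is harmless because all densities and blow-ups are taken in normal coordinates.
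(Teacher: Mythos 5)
The paper offers no proof of this statement: it is quoted verbatim from Maggi's Theorem~16.3, relying on the remark earlier in that subsection that the Euclidean theory of Caccioppoli sets extends routinely to a closed Riemannian manifold. Your sketch --- complementation to reduce everything to $E\cap F$, Federer's criterion together with the De Giorgi/Federer structure theory, the $\nu_E=\pm\nu_F$ lemma (which in full rigor is the standard locality of approximate tangent planes on $\partial^*E\cap\partial^*F$), and a blow-up bookkeeping, all transferred via normal coordinates --- is a correct reconstruction of that standard Euclidean proof and of the routine transfer the paper implicitly invokes, so it is essentially the same approach.
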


    \subsection{1-parameter Min-max Theory}\label{subsect:one-para}

        Let's consider the boundary map $\partial: \cC(M) \rightarrow \cZ_n(M; \ZZ_2)$: More precisely, for any $A \in \cC(M)$, we use $\partial A$ to denote the mod $2$ flat cycle $\partial(\mathbf{E}^{n+1}\llcorner A)$, where $\mathbf{E}^{n+1}$ is defined in the previous subsection. It's easy to see that
        \begin{equation}
            P(A) = \mbfM(\partial A)\,.
        \end{equation}

        Since we assume that $H_n(M, \ZZ_2) = 0$, the boundary map is surjective. By Constancy Theorem, this map is in fact a double cover. With this observation, we can utilise Caccioppoli sets instead of mod $2$ flat cycles to define sweepouts on $M$. This has been used in \cite{zhouMinMaxHypersurface2017,zhouMinmaxTheoryConstant2017,zhouExistenceHypersurfacesPrescribed2018,zhouMultiplicityOneConjecture2019,chodoshSingularBehaviorGeneric2020}.
        
        \begin{Def}
            A (1-parameter) \textbf{sweepout} on $M$ is a continuous map $\Phi: [0,1] \rightarrow \cC(M)$ with $\Phi(0) = 0$ and $\Phi(1) = M$. The collection of all sweepouts is denoted by $\cP$.
        \end{Def}
        
        \begin{Def}
            The \textbf{min-max width} on $M$ is defined to be
            \begin{equation*}
                \cW = \inf_{\Phi \in \cP} \sup_{t \in [0,1]} \mbfM(\partial\Phi(t))>0\,.
            \end{equation*}
        \end{Def}
        
        The Almgren-Pitts theory \cite{almgrenTheoryVarifoldsVariational1965,pittsExistenceRegularityMinimal1981,schoenRegularityStableMinimal1981} has shown that the width $\cW$ could always be realized by a minimal hypersurface (possibly with multiplicities). Recently, O. Chodosh, Y. Liokumovich and L. Spolaor \cite{chodoshSingularBehaviorGeneric2020} gave a refined description of such a minimal hypersurface in terms of its singularities and Morse index. Here, we adapt their results to our setting.

        \begin{Def}[ONVP sweepouts]
            A sweepout $\Phi$ is called an \textbf{optimal nested volume parametrized (ONVP) sweepout} if it satisfies the following conditions.
            \begin{description}
                \item[1. optimal] $\sup_{x\in[0,1]} \mbfM(\partial\Phi(x)) = \cW$;
                \item[2. nested] $\Phi(x_1) \subset \Phi(x_2)$, for all $0 \leq x_1 \leq x_2 \leq 1$;
                \item[3. volume parametrized] $\mathrm{Vol}(\Phi(x)) = x \cdot \mathrm{Vol(M, g)}$, for every $x \in [0, 1]$. 
            \end{description}

            The \textbf{critical domain} of $\Phi$ is the set
            \begin{equation}
                \mathbf{m}(\Phi) = \set{x \in [0, 1]: \limsup_{y \rightarrow x}\mathbf{M}(\partial\Phi(y)) = \cW}\,.
            \end{equation}
            Similarly, the \textbf{left} (resp. \textbf{right}) \textbf{critical domain} of $\Phi$ can be defined as the set $\mathbf{m}_L(\Phi)$ (resp. $\textbf{m}_R(\Phi)$) only involving $y \nearrow x$ (resp. $y \searrow x$). Apparently, $\mathbf{m}(\Phi) = \mathbf{m}_L(\Phi) \bigcup \mathbf{m}_R(\Phi)$.
            
            The \textbf{critical set} of $\Phi$ is 
            \begin{align}
             \mbfC(\Phi):=  \{V\in \cV_n(M): V = \lim_{j\to \infty} |\partial \Phi(x_j)|,\ x_j\in [0,1];\ \|V\|(M) = \cW\}.
            \end{align}
        \end{Def}

        \begin{Def}[Excessive points]
            A point $x_0 \in [0,1]$ is called \textbf{left} (resp. \textbf{right}) \textbf{excessive} for a sweepout $\Phi$, if there exists a constant $\varepsilon > 0$ and an interval $I = [a, b]$, $[a, b)$, $(a, b]$ or $(a, b)$ with $(x_0 - \varepsilon, x_0] \subset I$ (resp. $[x_0, x_0 + \varepsilon) \subset I$), satisfying the following replacement condition.

            We can find a continuous map on $I$, $\set{\Phi^I(x)}_{x\in I}$, such that $\Phi^I(a) = \Phi(a)$ and $\Phi^I(b) = \Phi(b)$ but for all $x \in I$,
            \begin{equation}
                \limsup_{I \ni y \rightarrow x} \mathbf{M}(\partial\Phi^I(y)) < W\,.
            \end{equation}
        \end{Def}

        \begin{Thm}[{\cite[Theorem~19]{chodoshSingularBehaviorGeneric2020}}]\label{thm:ONVP}
            For any closed Riemannian manifold $(M, g)$, there exists an (ONVP) sweepout $\Psi$ such that every $x \in \mathbf{m}_L(\Psi)$ is not left-excessive and every $x \in \mathbf{m}_R(\Psi)$ is not right excessive. The Almgren-Pitts min-max theory implies that there exists a stationary integral varifold $V \in \mathbf{C}(\Psi)$ whose support is the closure of a minimal hypersurface $\Sigma$, and $\|V\|(M) = \cW$.
        \end{Thm}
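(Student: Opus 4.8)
The statement packages two claims: the existence of an (ONVP) sweepout $\Psi$ whose left- (resp.\ right-) critical points are not left- (resp.\ right-) excessive, and the fact that such a $\Psi$ carries a stationary integral varifold realizing the width. I would dispose of the second, classical, claim first. The width is positive: applying an isoperimetric inequality to the slice $\Phi(x)$ of any $\Phi \in \cP$ with $\mathrm{Vol}(\Phi(x)) = \tfrac12\mathrm{Vol}(M,g)$ forces $P(\Phi(x)) = \mbfM(\partial\Phi(x))$ to exceed a constant depending only on $(M,g)$, so $\cW > 0$. Then the Almgren--Pitts pull-tight deformation (which can be arranged to respect the nested and volume-parametrized structure) followed by Pitts' combinatorial ``almost minimizing in annuli'' argument and the Schoen--Simon regularity theorem produces a stationary integral varifold $V \in \mbfC(\Psi)$ with $\|V\|(M) = \cW$ whose support is the closure of a minimal hypersurface with optimal regularity. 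No property of $\Psi$ beyond optimality is used here; non-excessiveness is the extra structure exploited in Steps~1--2 of the Sketch.

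The heart of the matter is the construction of $\Psi$, which I would carry out in three stages following \cite{chodoshSingularBehaviorGeneric2020}. \emph{Existence of ONVP sweepouts realizing $\cW$.} Because $H_n(M;\ZZ_2) = 0$, the boundary map $\partial\colon \cC(M) \to \cZ_n(M;\ZZ_2)$ is a double cover (Subsection~\ref{subsect:one-para}), so sweepouts may be taken to be paths in $\cC(M)$ from $0$ to $M$, with $\mbfM(\partial A) = P(A)$. A rearrangement argument into nested families that does not raise $\sup_x \mbfM(\partial\Phi(x))$, applied to an optimal sequence, yields nested optimal sweepouts; for such a family $x \mapsto \mathrm{Vol}(\Phi(x))$ is continuous and non-decreasing from $0$ to $\mathrm{Vol}(M)$, and reparametrizing by it — collapsing any interval on which it, hence $\Phi$, is constant — produces a volume-parametrized sweepout with the same width. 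Thus the class $\cA$ of ONVP sweepouts is non-empty with $\inf_{\Psi \in \cA}\sup_x \mbfM(\partial\Psi(x)) = \cW$. \emph{Compactness of $\cA$.} The decisive observation is that nesting plus volume parametrization rigidify the modulus of continuity: for $\Psi \in \cA$ and $x \le y$,
\[
    d_{\cC(M)}\bigl(\Psi(x),\Psi(y)\bigr) = \cH^{n+1}\bigl(\Psi(y)\setminus\Psi(x)\bigr) = (y-x)\cdot\mathrm{Vol}(M,g)\,,
\]
so $\cA$ is uniformly Lipschitz in the $L^1$ metric; together with the $BV$-compactness on the compact manifold $M$ this gives, by Arzel\`a--Ascoli, sequential compactness of $\cA$ in $C^0([0,1];\cC(M))$. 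Since the perimeter is lower semicontinuous under $L^1$-convergence and a limit of sweepouts is a sweepout, any such limit $\Psi_\infty$ again lies in $\cA$, and its critical domain does not increase in the limit (suitably interpreted).

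\emph{Removing excessive points.} Suppose $\Psi \in \cA$ and $x_0 \in \mathbf{m}_L(\Psi)$ is left-excessive, witnessed by an interval $I$ and a family $\Phi^I$ with $\sup_{I}\mbfM(\partial\Phi^I) < \cW$. Splicing $\Phi^I$ into $\Psi$ over $I$, and if needed re-nesting and re-parametrizing the spliced path — which by the ``locality'' of the Stage~1 rearrangement introduces no slice of mass $\ge \cW$, precisely because the only modified slices lie over $I$ — yields $\Psi' \in \cA$ with $\mathbf{m}(\Psi') \cap (x_0 - \varepsilon, x_0] = \emptyset$, hence $\mathbf{m}(\Psi') \subsetneq \mathbf{m}(\Psi)$; symmetrically for right-excessive points. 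Iterating while systematically exhausting a countable family of candidate intervals with rational endpoints, and passing to the $L^1$-limit via the compactness of Stage~2, I would obtain $\Psi \in \cA$ none of whose critical points is left- or right-excessive: by optimality and the replacement condition, an excessive point of the limit sweepout would already have been removed at a finite stage of the iteration.

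I expect this last stage to be the principal obstacle. Two points require genuine care: first, that the replacement/splicing can be performed compatibly with the nested and volume-parametrized constraints so that the output remains in $\cA$ (this is where the locality of the nesting rearrangement is needed); and second, that non-excessiveness is stable under the $L^1$-limit, i.e.\ that the iterative removal of excessive intervals genuinely terminates in the relevant sense. The remaining ingredients — the isoperimetric lower bound on $\cW$, the nesting-and-reparametrization construction of ONVP sweepouts, $BV$-compactness, and the Almgren--Pitts/Schoen--Simon regularity package — are by now standard.
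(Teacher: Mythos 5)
This theorem is not proved in the paper at all: it is imported verbatim from \cite{chodoshSingularBehaviorGeneric2020}, so your reconstruction has to stand on its own. Your Stages 1--2 are essentially sound and match the cited construction: nestedness plus volume parametrization give the exact modulus $d_{\cC(M)}(\Psi(x),\Psi(y))=|y-x|\,\mathrm{Vol}(M,g)$, and together with the uniform perimeter bound, BV compactness and lower semicontinuity of perimeter this makes the relevant class of sweepouts sequentially compact. Two small points there: non-emptiness of your class $\cA$ of \emph{optimal} nested volume-parametrized sweepouts (sup mass exactly $\cW$, not merely close to it) already requires applying this compactness to a minimizing sequence, so your ordering is slightly off; and the parenthetical claim that pull-tight ``can be arranged to respect the nested and volume-parametrized structure'' is unjustified but also unnecessary --- one only needs the standard fact that tightening a critical sequence yields one whose critical set is contained in $\mathbf{C}(\Psi)$, so the stationary, almost-minimizing, Schoen--Simon-regular varifold it produces already lies in $\mathbf{C}(\Psi)$.

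The genuine gap is Stage 3, which you yourself flag. The iterate-and-pass-to-the-limit scheme is not justified on three counts. First, excessiveness is a property of the pair (sweepout, point), witnessed by a replacement family with matching endpoint slices; after each splice you must re-nest and re-parametrize by volume, which shifts every parameter value, so there is no fixed countable family of ``intervals with rational endpoints'' that exhausts the excessive points of all the sweepouts appearing in the iteration. Second, each modification can create new critical points and new excessive points elsewhere, and nothing in your scheme is monotone, so there is no reason the removal process terminates or even makes progress. Third, the concluding claim that ``an excessive point of the limit sweepout would already have been removed at a finite stage'' does not follow: a witness family for $\Psi_\infty$ at $x_0$ has endpoints $\Psi_\infty(a),\Psi_\infty(b)$ and does not splice into the approximants $\Psi_j$, and since mass is only lower semicontinuous under $L^1$ convergence, neither critical domains nor (non-)excessiveness transfer between $\Psi_j$ and $\Psi_\infty$ in the required direction. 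The way to make this step rigorous --- and it is how the cited proof proceeds in spirit --- is variational rather than iterative: minimize over the compact class $\cA$ a quantity such as $\int_0^1 \mbfM(\partial\Phi(x))\,dx$ (geometrically meaningful because of the volume parametrization, lower semicontinuous by Fatou), extract a minimizer using your Stage 2 compactness, and then show that an excessive critical point would allow you to splice in the witness family --- after the nested/volume-parametrized regularization, which does not increase any slice mass and strictly decreases it on a set of parameters of positive measure --- thereby strictly decreasing the functional, a contradiction. This one-shot argument removes exactly the termination and limit issues your Stage 3 leaves open; without it (or an equivalent selection principle), the proposal does not yet prove the existence of a non-excessive ONVP sweepout.
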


        \begin{Def}[One-sided homotopy area-minimizing]
            Given a minimal hypersurface $\Sigma \subset (M, g)$, $p \in \overline \Sigma$ and $r > 0$ small enough such that $\overline \Sigma \cap B_r(p)$ separates the open ball $B_r(p)$ into two open connected components $E_+$ and $E_-$. $\Sigma$ is said to be \textbf{one-sided homotopy area-minimizing (OSHAM)} in $B_r(p)$, if there does not exist a deformation $\set{\Omega(t) \subset \cC(M)}_{t \in [0, 1]}$  satisfying the following conditions.
            \begin{enumerate}
                \item $\Omega(0)\cap B_r(p) = E_\pm$ and $\Omega(t) \subset \Omega(s)$ for any $t \geq s$;
                \item $\partial^* \Omega(t) \Delta \partial^* E_\pm \subset B_r(p)$;
                \item $\mathbf{M}_{B_r(p)}(\partial \Omega(t)) \leq \mathbf{M}_{B_r(p)}(\partial E_\pm)$ and $\mathbf{M}_{B_r(p)}(\partial \Omega(1)) < \mathbf{M}_{B_r(p)}(\partial E_\pm)$.
            \end{enumerate}

            We define
            \begin{equation}
                \mathfrak{h}_{nm}(\Sigma) := \set{p \in \overline \Sigma: \forall r > 0 \text{ small, } \Sigma\cap B_r(p) \text{ is not (OSHAM) in }B_r(p)}\,.
            \end{equation}
        \end{Def}

        \begin{Thm}[{\cite[Theorem~4, Proposition~29, Lemma~30]{chodoshSingularBehaviorGeneric2020}}]\label{thm:sing_index}
            Given a closed Riemannian manifold $(M^8, g)$ and a sweepout $\Psi$ in Theorem \ref{thm:ONVP}, by possibly replacing $\Phi(x)$ by $M \setminus \Phi(1-x)$, there exists a sequence $x_i \nearrow x_0 \in \mathbf{m}_L(\Phi)$ such that 
            \[
                \lim_i |\partial \Phi(i)| \rightarrow V = \sum_i \kappa_i|\Sigma_i| \in \mathbf{C}(\Phi)\,,
            \]
            where $\Sigma_i$'s are pairwise disjoint minimal hypersurfaces with $\kappa_i \in \set{1, 2}$ and
            \begin{equation}
                \sum_i\cH^0(\mathfrak{h}_{nm}(\Sigma_i)) + \sum_i\mathrm{Index}(\Sigma_i) \leq 1\,.
            \end{equation}
            
            Moreover, if there exists a $\kappa_i = 2$, then 
            \begin{equation}
                \mathfrak{h}_{nm}(\Sigma) = \emptyset\,.    
            \end{equation}
            Otherwise, $V$ is of multiplicity one and $V = |\partial \Phi(x_0)|$.
        \end{Thm}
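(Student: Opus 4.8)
The statement is, up to transcription into the Caccioppoli-set sweepout formalism of Subsection~\ref{subsect:one-para}, exactly \cite[Theorem~4, Proposition~29, Lemma~30]{chodoshSingularBehaviorGeneric2020}, so the plan is to assemble those three ingredients and note that only the surjectivity/double-cover property of $\partial\colon\cC(M)\to\cZ_7(M;\ZZ_2)$ is used, which holds under our hypothesis $H_7(M,\ZZ_2)=0$. The starting point is Theorem~\ref{thm:ONVP}: fix the (ONVP) sweepout $\Psi$ provided there, which is nowhere left- or right-excessive on its critical domain, and a stationary integral varifold $V\in\mathbf{C}(\Psi)$ with $\|V\|(M)=\cW$ whose support is $\overline{\Sigma}$ for a minimal hypersurface $\Sigma$ (here one invokes the Almgren--Pitts construction together with the Schoen--Simon/Wickramasekera interior regularity, so that in dimension $8$ the singular set of each component of $V$ is a finite set of isolated points). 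By definition of $\mathbf{C}(\Psi)$ there is a sequence $x_j\to x_0$ with $|\partial\Psi(x_j)|\to V$ and $\mathbf{M}(\partial\Psi(x_j))\to\cW$; extracting a monotone subsequence we land in $\mathbf{m}_L(\Psi)$ or $\mathbf{m}_R(\Psi)$. The transformation $\Psi(x)\mapsto M\setminus\Psi(1-x)$ preserves being an (ONVP) sweepout with the non-excessiveness property, exchanges left and right, and leaves $\partial\Psi(x)$ (hence $V$) unchanged, so after this replacement we may assume $x_j\nearrow x_0\in\mathbf{m}_L(\Psi)$.

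Next one analyses the left-limit structure. Writing $\Omega_j:=\Psi(x_j)$, a nested increasing family converging in $L^1$ to $A:=\Psi(x_0)$, so $\partial\Omega_j\to\partial A$ in the flat topology while $\mathbf{M}(\partial\Omega_j)\to\cW$. Decomposing $\Omega_j=A\setminus(A\setminus\Omega_j)$ and applying Theorem~\ref{thm:char} to the slabs $B_j:=A\setminus\Omega_j\searrow\emptyset$, the varifolds $|\partial\Omega_j|$ split in the limit as $|\partial A|$ plus the limit of $|\partial B_j|$, the ``folded'' part invisible to the flat norm. A compactness and lower-semicontinuity argument, together with monotonicity, shows this folded part is a stationary integral varifold carried by minimal hypersurfaces overlapping $\partial^*A$ with multiplicity at most one, giving $V=\sum_i\kappa_i|\Sigma_i|$ with the $\Sigma_i$ pairwise disjoint minimal hypersurfaces and $\kappa_i\in\{1,2\}$ (this is \cite[Proposition~29]{chodoshSingularBehaviorGeneric2020}). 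If all $\kappa_i=1$ there is no folded part on $\supp V$, so $V=|\partial A|=|\partial\Psi(x_0)|$ is of multiplicity one; if some $\kappa_i=2$, then $\Sigma_i$ is approached as the boundary of a thin one-sided slab $B_j$ collapsing onto it, which by comparison forces $\Sigma_i$ to be stable and (OSHAM) at every point, and a refinement of the replacement argument below shows the collapsing slab exhausts the available flexibility, so $\mathfrak{h}_{nm}(\Sigma_{i'})=\emptyset$ for all $i'$. These are the two ``moreover'' alternatives.

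The core is the bound $\sum_i\cH^0(\mathfrak{h}_{nm}(\Sigma_i))+\sum_i\mathrm{Index}(\Sigma_i)\le 1$, proved by contradiction against non-left-excessiveness of $x_0$ (this is \cite[Lemma~30]{chodoshSingularBehaviorGeneric2020}). If the left side were $\ge 2$ one produces two independent negative directions for the area functional localized near $\supp V$: each unit of Morse index contributes a compactly supported normal deformation of a regular portion of some $\Sigma_i$ strictly decreasing area to second order, and each $p\in\mathfrak{h}_{nm}(\Sigma_i)$ contributes, by the very negation of the (OSHAM) property in a small ball $B_r(p)$, a one-parameter nested family $\{\Omega^{p}(t)\}$ supported in $B_r(p)$ with $\mathbf{M}_{B_r(p)}(\partial\Omega^{p}(t))\le\mathbf{M}_{B_r(p)}(\partial E_\pm)$, strictly less at $t=1$. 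Splicing these localized deformations, whose supports can be taken pairwise disjoint, into $\Psi$ over a short interval $I$ with $(x_0-\varepsilon,x_0]\subset I$, interpolating in the remaining parameter and matching $\Psi$ at $\partial I$, produces a competitor family $\Psi^I$ with $\limsup_{I\ni y\to x}\mathbf{M}(\partial\Psi^I(y))<\cW$ for every $x\in I$ — i.e.\ $x_0$ is left-excessive, contradicting Theorem~\ref{thm:ONVP}. (The sweepout direction itself is one already-available parameter, which is why two extra units, not one, are needed to win; this is the Marques--Neves-type index accounting adapted to the singular, one-sided setting.)

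I expect this last splicing-and-bookkeeping step to be the principal obstacle: one must realize the local catenoid-type deformations at the isolated singular points, where after blow-up the ambient geometry is only crudely controlled, inside the Caccioppoli-set framework while keeping the family continuous into $\cC(M)$ and the total perimeter strictly below $\cW$ uniformly on $I$; one must also verify that the ``index'' deformations on the smooth parts genuinely lower the mass after passing to the sweepout and do not interfere with the nestedness of the competitor near $x_0$. All of this is carried out in \cite[\S4--5]{chodoshSingularBehaviorGeneric2020}, and transcribes to the present setting without change once the boundary map $\partial\colon\cC(M)\to\cZ_7(M;\ZZ_2)$ is known to be a surjective double cover, as recorded in Subsection~\ref{subsect:one-para}.
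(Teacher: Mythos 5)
The paper offers no independent proof of this statement: it is imported verbatim (modulo the Caccioppoli-set transcription of Subsection~\ref{subsect:one-para}) from \cite[Theorem~4, Proposition~29, Lemma~30]{chodoshSingularBehaviorGeneric2020}, and your proposal takes essentially the same route, correctly identifying the ingredients (ONVP sweepout from Theorem~\ref{thm:ONVP}, the multiplicity dichotomy, and the non-left-excessiveness contradiction for the index/OSHAM bound) while deferring the substantive splicing and interpolation work to the same reference. Your sketch is consistent with how that reference argues, so this matches the paper's treatment.
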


        One essential ingredient of the proof is the following interpolation lemma which follows from a result of K.J. Falconer \cite{falconerContinuityPropertiesKplane1980} (See also \cite[Appendix~6]{guthVolumesBallsLarge2011}, \cite[Lemma~5.3]{chambersExistenceMinimalHypersurfaces2020}).

        \begin{Lem}[{\cite[Lemma~16]{chodoshSingularBehaviorGeneric2020}}]\label{lem:interpolation}
            On $(M^{n+1}, g)$, for every $L, \varepsilon > 0$, there exists a $\delta > 0$ satisfying the following property.

            For any $\Omega_0, \Omega_1 \in \cC(M)$ with $\Omega_0 \subset \Omega_1$, $P(\Omega_i) \leq L$ and $\mathrm{Vol}(\Omega_1 \setminus \Omega_0) \leq \delta$, there exists a nested continuous family $\set{\Omega_t}$ with
            \begin{equation}
                P(\Omega_t) \leq \max\set{P(\Omega_1), P(\Omega_0)} + \varepsilon\,,
            \end{equation}
            for all $t \in [0, 1]$.
        \end{Lem}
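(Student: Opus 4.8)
The plan is to realise $\{\Omega_t\}$ by continuously transferring the small-volume region $A:=\Omega_1\setminus\Omega_0$ into $\Omega_0$, a little at a time, along a sufficiently generic fine decomposition of $M$; the perimeter of the intermediate sets is then estimated through the set-operation formulas of Theorem~\ref{thm:char} together with the coarea formula, in the spirit of the classical ``small-volume filling'' estimates of Gromov and Guth \cite[Appendix~6]{guthVolumesBallsLarge2011} (see also \cite[Lemma~5.3]{chambersExistenceMinimalHypersurfaces2020}).

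First I would isolate the reason $\max$ rather than a sum appears. As $\Omega_0\subset\Omega_1$, applying Theorem~\ref{thm:char} to $\Omega_1\setminus\Omega_0$ gives, up to $\cH^n$-null sets, a disjoint decomposition $\partial^*A\approx B_0\sqcup B_1$ with $B_1:=\partial^*\Omega_1\cap\Omega_0^{(0)}\subset\partial^*\Omega_1$ and $B_0:=\partial^*\Omega_0\cap\Omega_1^{(1)}\subset\partial^*\Omega_0$ --- the set $\set{\nu_{\Omega_0}=-\nu_{\Omega_1}}$ being $\cH^n$-null, since nesting precludes opposite normals at a common reduced-boundary point --- so that, writing $\Gamma:=\partial^*\Omega_0\cap\partial^*\Omega_1$, one has $P(\Omega_1)=\cH^n(\Gamma)+\cH^n(B_1)$ and $P(\Omega_0)=\cH^n(\Gamma)+\cH^n(B_0)$. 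For any Caccioppoli set $\Omega$ with $\Omega_0\subset\Omega\subset\Omega_1$ which agrees with $\Omega_1$ on an open set $V$ and with $\Omega_0$ off $\overline V$, Theorem~\ref{thm:char} then yields $P(\Omega)\le\cH^n(\Gamma)+\cH^n(B_1\cap\overline V)+\cH^n(B_0\setminus V)+\cH^n\big(A^{(1)}\cap\partial^t V\big)$, because across $\partial^t V$ the set $\Omega$ can jump only inside $A$, while along $B_0\subset\Omega_1^{(1)}$ it does not jump at all. So it suffices to build a nested continuous family of open sets $V_t$ running from $\emptyset$ to $M$ with, for every $t$: (i) $\cH^n(A^{(1)}\cap\partial^t V_t)$ small, and (ii) $\cH^n(B_1\cap\overline{V_t})+\cH^n(B_0\setminus V_t)\le\max\set{\cH^n(B_0),\cH^n(B_1)}+\varepsilon/2$; then $\Omega_t:=\Omega_0\cup(A\cap V_t)$ is the desired interpolation.

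For the construction I would fix --- using only $L$ and $\varepsilon$ --- a finite coordinate atlas of $M$ together with a Euclidean grid of some small mesh $\ell$ pulled back through the charts. Falconer's theorem \cite{falconerContinuityPropertiesKplane1980} guarantees that, for a generic translation of the grid, the non-atomic finite measures $\cH^n\llcorner\partial^*\Omega_0$ and $\cH^n\llcorner\partial^*\Omega_1$ put mass at most $\varepsilon/10$ on every cell and nothing on the grid skeleton $K$, and that the coordinate-hyperplane sections of $A$ depend continuously on the section parameter. Having fixed such a grid (with also $\ell^n\le\varepsilon/10$), I would grow $V_t$ cell by cell --- processing the cells in an order that interleaves the two faces $B_0,B_1$ of $A$, for instance prioritising the cells meeting $\partial^*\Omega_0$ so that $B_0$ is absorbed no slower than $B_1$ --- and inside each active cell sweep by parallel coordinate half-spaces. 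Localising the estimate above to the filled cells, the active cell, and the complement, and using the per-cell perimeter bound $\varepsilon/10$ and the trivial in-cell section bound $\ell^n$, one obtains $P(\Omega_t)\le\max\set{P(\Omega_0),P(\Omega_1)}+\cH^n(A^{(1)}\cap K)+\varepsilon/2$; and by Fubini along the grid hyperplanes $\cH^n(A^{(1)}\cap K)\le C(M,\ell)\,\mathrm{Vol}(A)$. Finally --- the atlas and $\ell$ now being fixed --- I would choose $\delta$ so small that $C(M,\ell)\,\delta\le\varepsilon/2$; then $\mathrm{Vol}(A)\le\delta$ forces $P(\Omega_t)\le\max\set{P(\Omega_0),P(\Omega_1)}+\varepsilon$ for all $t$, and $\{\Omega_t\}$ is continuous in $\cC(M)$ because each elementary move alters $\Omega_t$ by a set of volume $\le\mathrm{Vol}(A)$.

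The genuine obstacle is that the perimeter must be controlled at \emph{every} parameter value along the path, not merely at the two endpoints: the bare coarea and Fubini bounds control the relevant hyperplane-section measures only on average, so a priori a single unlucky level of the sweep could produce a large cut $\cH^n(A^{(1)}\cap\partial^t V_t)$ or a large boundary discrepancy $\cH^n(B_1\cap\overline{V_t})-\cH^n(B_0\cap V_t)$. The only non-elementary input that removes this difficulty is Falconer's continuity property for hyperplane sections of sets of finite measure and finite perimeter, which --- precisely because the mesh $\ell$ is chosen before $\delta$, so that every cell carries a uniformly small slice --- upgrades the average bounds to the uniform ones used above. A secondary point, getting $\max\set{P(\Omega_0),P(\Omega_1)}$ rather than $P(\Omega_0)+P(\Omega_1)$, is what forces both the structural splitting $\partial^*A\approx B_0\sqcup B_1$ and an order of transfer that never runs too far ahead on one of the two faces; this, together with the bookkeeping with Theorem~\ref{thm:char}, is routine once the mesh has been fixed.
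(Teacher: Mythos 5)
You should first note that the paper itself does not prove this lemma: it imports it verbatim from \cite[Lemma~16]{chodoshSingularBehaviorGeneric2020} (with Falconer, Guth and Chambers--Liokumovich cited as the underlying technology), so your argument has to stand on its own. Its bookkeeping part is fine --- the splitting $\partial^*A\approx B_0\sqcup B_1$, the perimeter estimate for sets agreeing with $\Omega_1$ on $V$ and $\Omega_0$ off $\overline V$, and the ordering of cells (those with more $B_0$ than $B_1$ first) that keeps the ``completed cells'' contribution below the maximum. The gap is the claim that, for a grid whose mesh $\ell$ is fixed in terms of $L$ and $\varepsilon$ alone, a generic translation makes $\cH^n\llcorner\partial^*\Omega_0$ and $\cH^n\llcorner\partial^*\Omega_1$ put mass at most $\varepsilon/10$ in \emph{every} cell. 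Non-atomicity gives no uniform smallness at a fixed scale: with total perimeter $\leq L$ one can stack on the order of $L/\rho^{n}$ nearly parallel sheets inside a ball of radius $\rho\ll\ell$, and then every translate of the grid has a cell carrying perimeter mass comparable to $L$; genericity of the translation only kills mass on the skeleton, it cannot spread out concentration at scales below the mesh. Falconer's theorem concerns continuity of $k$-plane integrals of an $L^1$ function in the plane parameter; it does not ``upgrade the average bounds to uniform ones,'' so the sentence in which you delegate exactly this difficulty to Falconer attributes to his result a statement it does not contain.

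This is not a repairable technicality within your scheme, because the per-cell bound is what controls the overshoot $a_j+b_j$ during the linear sweep of the active cell. Concretely, inside a single cell one can place most of $B_0$ as the walls of a thin crack of $A$ buried in $\Omega_0$ near one side of the cell, and most of $B_1$ as the faces of a thin sheet of $A$ with the exterior of $\Omega_1$ on both sides near the opposite side, with $\operatorname{Vol}(A)$ as small as one likes; a coordinate sweep entering from the $B_1$ side then exposes essentially all of $B_1$ while retiring none of $B_0$, so at mid-sweep $P(\Omega_t)$ exceeds $\max\set{P(\Omega_0),P(\Omega_1)}$ by an amount of order $L$, not $\varepsilon$, and mirrored/rotated copies of this configuration inside the same cell punish all coordinate directions simultaneously. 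Nor can you escape by choosing $\ell$ after seeing $\Omega_0,\Omega_1$ (non-atomicity does allow a set-dependent mesh with small mass per cell): your $\delta$ must be declared before the sets, and the skeleton estimate forces $\delta\lesssim\varepsilon\,\ell$, so a set-dependent $\ell$ destroys the required uniformity of $\delta=\delta(M,g,L,\varepsilon)$. This quantifier clash --- uniform $\delta$ versus boundaries that may concentrate at arbitrarily small scales --- is precisely the heart of the lemma, and it is the point your proposal waves away; the cited proofs handle it by a more careful choice of the subdivision and of how the small-volume region is exhausted, not by a single generic grid fixed by $L$ and $\varepsilon$ together with straight-line sweeps.
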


    \subsection{Surgery Procedure \`a la Chodosh-Liokumovich-Spolaor}

        In this subsection, we will recall the surgery procedure in \cite{chodoshSingularBehaviorGeneric2020} to perturb away singularities locally when $\mathfrak{h}_{nm}(\Sigma) = \emptyset$ for a minimal hypersurface $\Sigma^7 \subset (M^8, g)$. Since the surgery was done locally, from now on, we focus on a singular point $p \in \mathrm{Sing}(\Sigma)$. WLOG, let's assume that for $\varepsilon_0 > 0$, $\Sigma$ has only one singular point and is OSHAM in $B_{2\varepsilon_0}(p)$.

        \begin{Lem}[Surgery Procedure {\cite[Proposition~31]{chodoshSingularBehaviorGeneric2020}}]\label{lem:surgery}
            Let $M, g, \Sigma, p$ and $\varepsilon_0$ be as above. For every $k\geq 4$, $\delta > 0$, there exists a Riemmanian metric $g'$ and a minimal hypersurface $\Sigma' \subset (M, g')$ satisfying the following conditions.
            \begin{enumerate}
                \item $\|g - g'\|_{C^k} < \delta$;
                \item $g = g'$ and $\Sigma = \Sigma'$ outside $B_{2\varepsilon_0}(p)$;
                \item $\mathrm{Sing}(\Sigma') \cap B_{2\varepsilon_0} = \emptyset$.
            \end{enumerate}
        \end{Lem}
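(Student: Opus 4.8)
The plan is to localize the problem near the singular point $p$, pass to a Hardt–Simon foliation of one side of the tangent cone $C_p$, and use it to build a nested sweepout in $B_{2\varepsilon_0}(p)$ that both decreases area and avoids the singularity; then a calibration/comparison argument forces $\Sigma$ to be deformable in the way prohibited by (OSHAM), unless we can already smooth it. Concretely, since $p \in \Sing(\Sigma)$ and $\Sigma$ is OSHAM in $B_{2\varepsilon_0}(p)$, the tangent cone $C_p$ at $p$ is a stable minimal hypercone in $\RR^8$; by Simons' classification in low dimensions and the fact that we are in dimension $8$, $C_p$ is a regular cone (isolated singularity at $0$), and by the (OSHAM) hypothesis together with Hardt–Simon \cite{hardtAreaMinimizingHypersurfaces1985}, one of the two sides of $C_p$ carries a smooth strictly minimizing foliation $\{S_t\}_{t \in (-1,1)}$ with $S_0 = C_p \setminus \{0\}$, each $S_t$ ($t \neq 0$) smooth and minimal, and area strictly less than $C_p$ in any ball. (If the relevant side is the one on which $\Sigma$ does \emph{not} sit, one uses Lin \cite{Lin_1987} or Liu \cite{liuStationaryOnesidedAreaminimizing2019} for the one-sided version.)

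**First**, I would fix normal coordinates at $p$ so that $g$ is $C^k$-close to the Euclidean metric on $B_{2\varepsilon_0}(p)$, and use Allard-type regularity plus the monotonicity formula to say that, at sufficiently small scales, $\Sigma$ is $C^{1,\alpha}$-close (as a varifold, off a small neighborhood of $p$) to $C_p$. Then I would define the perturbed metric $g'$ by interpolating: on an intermediate annulus $B_{2\varepsilon_0}(p)\setminus B_{\varepsilon_0}(p)$ keep $g' = g$ and $\Sigma' = \Sigma$; on $B_{\varepsilon_0/2}(p)$ conformally or otherwise modify $g$ (following the construction in \cite[Proposition~31]{chodoshSingularBehaviorGeneric2020}) so that one of the Hardt–Simon leaves $S_{t_0}$, suitably rescaled and glued, becomes a smooth minimal hypersurface for $g'$; and interpolate smoothly in between. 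The key point is that the strict area deficit of the leaf $S_{t_0}$ relative to $C_p$ provides the room to absorb both the gluing error on the annulus and the $C^k$-smallness requirement $\|g-g'\|_{C^k}<\delta$.

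**The matching of the boundary data** on $\partial B_{\varepsilon_0}(p)$ is where the (OSHAM) hypothesis is used in an essential way: because $\Sigma$ is \emph{not} one-sided homotopy area-minimizing in $B_r(p)$ for every small $r$, there is no obstruction (from the area functional) to sliding $\Sigma \cap B_{\varepsilon_0}(p)$ across to the minimizing side and replacing it by a leaf of the foliation through a nested, area-nonincreasing family $\Omega(t)$; the surgery lemma upgrades this topological freedom to an actual metric perturbation under which the replaced surface is genuinely minimal and smooth. The construction keeps $\Sigma' = \Sigma$ and $g' = g$ outside $B_{2\varepsilon_0}(p)$ by design, so (2) holds; (1) holds by the smallness built into the interpolation; and (3) holds because the new surface inside $B_{2\varepsilon_0}(p)$ is, by construction, one of the smooth Hardt–Simon leaves (or a smooth graph over it), hence regular.

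**The main obstacle** I anticipate is the gluing step: constructing, in a quantitatively controlled way, a metric on the transition annulus that smoothly connects the original $(g,\Sigma)$ to the model $(g_{\mathrm{Hardt\text{-}Simon}}, S_{t_0})$ while keeping everything minimal and keeping $\|g-g'\|_{C^k}$ small — this requires an implicit-function-theorem / fixed-point argument to correct the approximately-minimal glued surface to an exactly minimal one, and the correction must be supported away from where it would reintroduce a singularity. Since the statement explicitly cites \cite[Proposition~31]{chodoshSingularBehaviorGeneric2020}, the honest proof is simply to invoke that construction verbatim (it was carried out there for exactly this situation, an isolated singularity with $\mathfrak{h}_{nm}=\emptyset$ locally), checking only that the hypotheses "$\Sigma$ has a single singular point in $B_{2\varepsilon_0}(p)$ and is OSHAM there" match the hypotheses of \cite[Proposition~31]{chodoshSingularBehaviorGeneric2020}, which they do by the reduction at the start of this subsection.
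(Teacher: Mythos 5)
Your bottom line---that the honest proof is to invoke \cite[Proposition~31]{chodoshSingularBehaviorGeneric2020} after checking that the local hypotheses (a single isolated singular point $p$, with $\Sigma$ OSHAM in $B_{2\varepsilon_0}(p)$, i.e.\ the situation arising when $\mathfrak{h}_{nm}(\Sigma)=\emptyset$) match---is exactly what the paper does: Lemma \ref{lem:surgery} is stated as a direct citation and is not reproved; the intended argument is simply to take the smooth minimal hypersurfaces $\Sigma_i\to\Sigma$ in $B_{2\varepsilon_0}(p)$ furnished by that proposition, glue $\Sigma_i\cap B_{\varepsilon_0}(p)$ to $\Sigma\setminus B_{2\varepsilon_0}(p)$ by a cut-off neck, and then perturb $g$ slightly (conformally) inside $B_{2\varepsilon_0}(p)$ so that the glued hypersurface is exactly minimal. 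At that level your proposal and the paper agree.

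However, your standalone sketch contains a genuine confusion about the hypothesis. In the paragraph on matching boundary data you assert that OSHAM enters ``because $\Sigma$ is \emph{not} one-sided homotopy area-minimizing in $B_r(p)$ for every small $r$,'' and you use this to claim there is no obstruction to an area-nonincreasing nested family sliding $\Sigma\cap B_{\varepsilon_0}(p)$ onto a foliation leaf. That is the negation of the hypothesis: in Lemma \ref{lem:surgery}, $\Sigma$ \emph{is} OSHAM in $B_{2\varepsilon_0}(p)$, and the scenario you describe---strictly area-decreasing nested deformations near $p$---is precisely the other branch of the dichotomy, which the paper treats by an entirely different mechanism (Lemmas \ref{lem:tech} and \ref{lem:dichotomy} and the global perturbation of Section \ref{sect:sing_cap}), not by surgery. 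The role of OSHAM in the surgery is the minimizing-type one from your first paragraph: it produces one-sided (homotopy) minimizers lying on one side of $\Sigma$ and converging to it, whose smoothness near $p$ rests on the fact that the tangent cone of an OSHAM hypersurface is one-sided area-minimizing (a blow-up step you omit), so that Hardt--Simon \cite{hardtAreaMinimizingHypersurfaces1985} or Lin/Liu \cite{Lin_1987,liuStationaryOnesidedAreaminimizing2019} applies; as the introduction of the paper stresses, such foliations exist only for (one-sided) area-minimizing cones, which is exactly why OSHAM, and not its negation, is what makes the surgery possible. With the direction of the hypothesis corrected, the rest of your outline is consistent with the cited construction.
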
 
\iffalse
        \begin{proof}
            It is shown in \cite{chodoshSingularBehaviorGeneric2020}, that there exists a sequence of smooth minimal hypersurfaces $\Sigma_i \rightarrow \Sigma$ in $B_{2\varepsilon_0}(p)$. 

            Let $i$ large enough depending on $\delta$ and $k$, we could replace $\Sigma \cap B_{\varepsilon_0}(p)$ with $\Sigma_i \cap B_{\varepsilon_0}(p)$ and then use standard cut-off function to construct a neck connecting $\Sigma - B_{2\varepsilon_0}(p)$ and $\Sigma_i \cap B_{\varepsilon_0}(p)$ to obtain a new hypersurface $\Sigma' \in \cR(M)$. It is easy to check that the mean curvature of $\Sigma'$ is sufficiently small so that it is possible to perturb the metric $g$ slightly inside $B_{2\varepsilon_0}(p)$ so that at the end of the day, $\Sigma'$ is minimal with respect to the perturbed metric $g'$.
        \end{proof}
        \begin{Rem}
            Note that for $p \in \mathrm{Sing}(\Sigma)$, if there exists a neigborhood $B_{\varepsilon_1}(p)$ where $\Sigma$ is (one-sided) area minimizing, then $\Sigma$ is OSHAM in $B_{\varepsilon_1/2}(p)$.
        \end{Rem}
\fi

    \subsection{Associated Jacobi Fields}\label{subsect:Jaccobi}

        We shall recall some notions and some results from \cite{wangDeformationsSingularMinimal2020}, which will be utilised later.

        Let $\Sigma \subset (M^8, g)$ be a two-sided minimal hypersurface with a unit normal field $\nu$. It follows from \cite[Section~5.4]{federerGeometricMeasureTheory1969} and \cite{schoenRegularityStableMinimal1981,simonAsymptoticsClassNonLinear1983} that the singular set $\Sing(\Sigma):=\overline{\Sigma}\setminus \Sigma$ of $\Sigma$ consists of isolated points, at each of which $\Sigma$ has a unique and regular tangent cone.

        On $\Sigma$, the space of functions that we are going to work on will be denoted by $\scB(\Sigma)$ and defined as follows. By \cite[Lemma~3.1]{wangDeformationsSingularMinimal2020}, one can observe that there exists $C_\Sigma >0$ such that \[
            \|\phi\|^2_{\scB(\Sigma)} := Q_\Sigma(\phi, \phi ) + C_\Sigma\|\phi\|^2_{L^2(\Sigma)} \geq \|\phi\|_{L^2(\Sigma)}^2\quad \forall \phi\in C_c^1(\Sigma)\,,\]
        where $Q_\Sigma(\phi, \phi):= \int_\Sigma |\nabla \phi|^2 - (|A_\Sigma|^2 + Ric_M(\nu, \nu))\phi^2$ be the quadratic form associated to the second variation of area functional at $\Sigma$. Hence, \[\scB(\Sigma):= \overline{C_c^\infty(\Sigma)}^{\|\cdot\|_{\scB}}\,,\] is a well defined Hilbert space and is naturally embedded in $L^2(\Sigma)$. Moreover, by \cite[Lemma~3.2]{wangDeformationsSingularMinimal2020}, every $\phi\in \scB(\Sigma)$ is locally $W^{1,2}$ on $\Sigma$, and by \cite[Proposition 3.5 \& Lemma~3.9]{wangDeformationsSingularMinimal2020}, $\scB(\Sigma)\hookrightarrow L^2(\Sigma)$ is a compact embedding.

        With $\scB(\Sigma)$, we can define the Morse index via the Jacobi operator $L_\Sigma:= \Delta_\Sigma + |A_\Sigma|^2+Ric_M(\nu, \nu)$ associated to $Q_\Sigma$. Thanks to the compact embedding, we can define the $L^2$-eigenvalues and eigenfunctions for $-L_\Sigma$ and derive the spectral decomposition of $L^2(\Sigma)$ as well as $\scB(\Sigma)$. Recall that the \textbf{Morse index} of $\Sigma$ has been defined in \cite{marquesEquidistributionMinimalHypersurfaces2019,deyCompactnessCertainClass2019} as the maximal dimension of the linear subspace of smooth ambient vector fields decreasing its area functional at second order. With $\scB(\Sigma)$, an equivalent definition \cite[Corollary~3.7]{wangDeformationsSingularMinimal2020} could be \[
            \mathrm{Index}(\Sigma) = \sum_{\lambda_j < 0} \dim E_j\,,\]
        where $E_j\subset \scB(\Sigma)$ is the $j$-th eigenspace of $L_\Sigma$. 

        $\Sigma$ is called \textbf{non-degenerate} if $0$ is not an eigenvalue of $-L_\Sigma$. When $\Sigma$ is non-degenerate, by \cite[Proposition~3.5]{wangDeformationsSingularMinimal2020} for every $f\in L^2(\Sigma)$, the equation $L_\Sigma u = f$ has a unique solution $u\in \scB(\Sigma)$, denoted by $L^{-1}_\Sigma(f)$.

        Now, for each singular point $p \in \mathrm{Sing}(\Sigma)$ with $B_{r_p}(p)$ small enough  such that $-L_\Sigma$ is strictly positive on $\scB_0(B_{r_p}(p)): = \overline{C_c^\infty(B_{r_p}(p)\cap \Sigma)}^{\|\cdot\|_{\scB}}$, according to \cite[Subsection~3.3]{wangDeformationsSingularMinimal2020}, we can define a unique (up to a normalization) \textbf{Green's function} $G_p \in C^\infty_{\mathrm{loc}}(\overline B_{r_p}(p)\cap \Sigma)$ of $L_\Sigma $ which vanishes on $\partial B_{r_p}(p)\cap \Sigma$. We extend $G_p$ to $\Sigma$ by setting $G_p = 0$ outside $B_{r_p(p)}$.

        \begin{Lem}[{\cite[Theorem 4.2]{wangDeformationsSingularMinimal2020}}] \label{lemma:exist_Jac}
            Suppose that an $8$-dimensional closed Riemannian manifold  $(M, g)$ admits a minimal hypersurface $\Sigma$ with a unit normal $\nu$. Let $f$ be a smooth function defined on $M$ such that $\nu(f)\vert_\Sigma \not\equiv 0$, $\set{c_j}$ be a sequence of positive real numbers with $c_j \rightarrow 0$ and $\set{f_j}$ be a sequence of smooth functions with $f_j \rightarrow f$ in $C^4$. Let's further assume that for each metric $g_j := (1 + c_j f_j) g$, there exists a minimal surface $\Sigma_j \subset (M, g_j)$ with $\mathrm{Index}(\Sigma_j) = \mathrm{Index}(\Sigma)$ and $\Sigma_j \rightarrow \Sigma$ in the varifold sense with multiplicity $1$.

            Then after passing to a subsequence, there exists a generalized Jacobi field $0\neq u\in C^2(\Sigma)$ associated to the subsequence which will still be denoted by $\{\Sigma_j\}_{j\geq 1}$. More precisely, there exist functions $u_j\in C^2(\Sigma)$ and positive real numbers $t_j\to 0_+$ such that
            \begin{itemize}
                \item for every open subset $W\subset \subset M\setminus \Sing(\Sigma)$ and $j$ sufficiently large, 
                    \[
                        \mathrm{graph}_\Sigma (u_j) \cap W = \Sigma_j \cap W\,;
                    \]
                \item $u_j/t_j \to u$ in $C^2_{loc}(\Sigma)$; 
                \item $L_\Sigma u = c\nu(f)$ for some real number $c\geq 0$;
                \item \[u \in \scB(\Sigma)\oplus \RR_{L_\Sigma}\langle Sing(\Sigma) \rangle := \scB(\Sigma)\oplus \bigoplus_{p\in \mathrm{Sing}(\Sigma)}\RR  G_p \,.\]
                %      \item $\exists\ c\in[0, +\infty)$ and $\{a_p\in \RR\}_{p\in Sing(\Sigma)}$ such that \[
                %        u= L_\Sigma^{-1}(c\nu(f)) +\sum_{p\in Sing(\Sigma)} a_p G_p   \] 
                %       where $G_p$ is the Green's function with a pole at $p$.
            \end{itemize}
        \end{Lem}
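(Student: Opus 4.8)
The plan is to realize each $\Sigma_j$ as a normal graph $\mathrm{graph}_\Sigma(u_j)$ over the regular part of $\Sigma$, extract the quasilinear elliptic equation $u_j$ solves, normalize $u_j$ by its size, and pass to a subsequential limit; the delicate point will be the behavior near $\Sing(\Sigma)$. First I would record the graph structure and the equation. Since the $\Sigma_j$ are stable minimal hypersurfaces in $(M,g_j)$ with $g_j\to g$ in $C^\infty_{\mathrm{loc}}$ and $\Sigma_j\to\Sigma$ as varifolds with multiplicity one, the Schoen--Simon curvature estimates together with Allard's regularity theorem give $C^{1,\alpha}_{\mathrm{loc}}$ convergence $\Sigma_j\to\Sigma$ on the regular part, upgraded by Schauder theory to $C^2_{\mathrm{loc}}(\Sigma)$; hence for $j$ large there are $u_j\in C^2(\Sigma)$ with $u_j\to 0$ in $C^2_{\mathrm{loc}}(\Sigma)$ and $\mathrm{graph}_\Sigma(u_j)\cap W=\Sigma_j\cap W$ for every $W\subset\subset M\setminus\Sing(\Sigma)$. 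The zero section has $g_j$-mean curvature $-\tfrac n2 c_j\,\nu(f_j)+O(c_j^2)$ along $\Sigma$ by the conformal change formula (using $g_j=(1+c_jf_j)g$ and $H_g|_\Sigma=0$), so writing out that $\mathrm{graph}_\Sigma(u_j)$ is $g_j$-minimal and linearizing the mean-curvature operator yields, on every $K\subset\subset K'\subset\subset M\setminus\Sing(\Sigma)$,
\begin{equation*}
    L_\Sigma u_j = \tfrac n2\, c_j\, \nu(f_j) + R_j,\qquad \|R_j\|_{C^0(K)}\le C(K)\big(c_j^2 + c_j\|u_j\|_{C^2(K')} + \|u_j\|_{C^2(K')}^2\big),
\end{equation*}
where the positive coefficient $\tfrac n2 c_j$ is what will force $c\ge0$ below.

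Next I would normalize and take an interior limit. Fixing a compact $K_0\subset\Sigma$, set $t_j:=\|u_j\|_{L^2(K_0)}+c_j>0$ and $v_j:=u_j/t_j$, so $c_j/t_j\le 1$ and $\|v_j\|_{L^2(K_0)}\le 1$. Since $\|u_j\|_{C^2(K')}=t_j\|v_j\|_{C^2(K')}\to 0$, the error bound gives $\|R_j\|_{C^0(K)}/t_j\to 0$ on compacta of $\Sigma$, interior Schauder estimates bound $\|v_j\|_{C^{2,\alpha}(K)}$ on every compact $K\subset\Sigma$, and a diagonal extraction produces a subsequence along which $v_j\to v$ in $C^2_{\mathrm{loc}}(\Sigma)$ with
\begin{equation*}
    L_\Sigma v = c\,\nu(f),\qquad c:=\tfrac n2\lim_j \tfrac{c_j}{t_j}\ge 0 .
\end{equation*}
Non-triviality is then immediate: if $c>0$ then $v\not\equiv0$ because $0$ does not solve $L_\Sigma v=c\nu(f)\not\equiv0$ (recall $\nu(f)|_\Sigma\not\equiv0$); if $c=0$ then $c_j/t_j\to0$, hence $\|u_j\|_{L^2(K_0)}/t_j\to1$ and $\|v\|_{L^2(K_0)}=1$. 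Setting $u:=v$, $t_j\to0_+$ and keeping the $u_j$ above gives $u\ne0$, $u_j/t_j\to u$ in $C^2_{\mathrm{loc}}(\Sigma)$, the graph identity, and $L_\Sigma u=c\nu(f)$ with $c\ge0$.

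The main obstacle, and the only remaining point, is the inclusion $u\in\scB(\Sigma)\oplus\bigoplus_{p\in\Sing(\Sigma)}\RR G_p$, which requires controlling $v_j$ uniformly near each singular point $p$. There the tangent cone $C_p$ is regular and, being a tangent cone of the stable $\Sigma$, stable, so on $B_{r_p}(p)\cap\Sigma$ the operator $L_\Sigma$ is a small perturbation of the Jacobi operator of $C_p$, whose solutions admit asymptotic expansions governed by the indicial roots $\gamma_j^\pm(C_p)$, with $G_p$ decaying at the slowest rate $\gamma_1^-(C_p)$. I would prove a uniform bound $|v_j(x)|\le C\,d(x,p)^{\gamma_1^-(C_p)}$ near $p$ (with a logarithmic correction in the borderline case), $C$ independent of $j$: on the annular region where $\Sigma_j$ is already known to be $\mathrm{graph}_\Sigma(u_j)$ this follows from the equation for $u_j$ and maximum-principle comparison against barriers assembled from $G_p$ and from homogeneous sub/supersolutions on $C_p$, converted to a definite rate by a three-annulus (Simon-type) iteration; controlling the innermost core of $\Sigma_j$ near $p$, where graphicality may fail, is precisely where the multiplicity-one varifold convergence (ruling out concentration of area at $p$) and the hypothesis $\mathrm{Index}(\Sigma_j)=\mathrm{Index}(\Sigma)$ — which, through the spectral characterization of the index on $\scB(\Sigma)$, forbids the appearance of extra low modes of $L_\Sigma$ at $p$, hence growth faster than $\gamma_1^-(C_p)$ — enter. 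Granting this bound, subtracting from $v$ a local particular solution of $L_\Sigma\,\cdot=c\,\nu(f)$ on $B_{r_p}(p)\cap\Sigma$ (which exists since $-L_\Sigma$ is strictly positive there, and is bounded, hence contributes only terms decaying faster than $G_p$) and invoking the asymptotic expansion for the resulting $L_\Sigma$-solution gives $v=a_pG_p+w_p$ with $w_p\in\scB_0(B_{r_p}(p))$ near each $p$; patching with cutoffs yields $v=\sum_p a_pG_p+\phi_0$ with $\phi_0\in\scB(\Sigma)$, which completes the proof. Passing to a further subsequence in $j$ at each stage is harmless.
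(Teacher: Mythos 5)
This lemma is not proved in the paper at all: it is quoted verbatim from \cite[Theorem~4.2]{wangDeformationsSingularMinimal2020}, so the only fair comparison is with that reference. Your outline does follow the same general strategy (graphical convergence away from $\Sing(\Sigma)$, linearization of the mean curvature operator under the conformal change, normalization by $t_j$, and then a growth analysis at the singular points), and the routine half is essentially fine: the derivation of $L_\Sigma u_j = \tfrac n2 c_j\nu(f_j)+R_j$ on compact subsets of $\Sigma$, the choice $t_j=\|u_j\|_{L^2(K_0)}+c_j$, the Schauder/diagonal extraction, the identity $L_\Sigma u = c\nu(f)$ with $c\ge 0$, and the dichotomy giving $u\neq 0$ are all correct (modulo the harmless sign convention, and modulo the fact that you assume the $\Sigma_j$ are stable, which is not a hypothesis — only $\mathrm{Index}(\Sigma_j)=\mathrm{Index}(\Sigma)$ is assumed; for the graphical convergence away from $\Sing(\Sigma)$ this is repairable since bounded index plus multiplicity-one convergence still yields local smooth convergence).

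The genuine gap is the last bullet, $u\in \scB(\Sigma)\oplus\bigoplus_{p}\RR G_p$, which is the entire non-routine content of the cited theorem, and your proposal only names the estimate it would need rather than proving it. Two specific problems. First, the uniform bound $|v_j(x)|\le C\,d(x,p)^{\gamma_1^-(C_p)}$ must hold across the scales near $p$ where $\Sigma_j$ is \emph{not} a graph over $\Sigma$, and there the "small perturbation of the cone operator'' picture is exactly what has to be established, not assumed: one needs Schoen--Simon curvature estimates for $\Sigma_j$ near $p$ (hence stability of $\Sigma_j$ in small balls around $p$), non-concentration of area from multiplicity one, and an $L^2$ three-annulus/growth argument to propagate control from an outer annulus down to the core; your sketch gestures at barriers and a Simon-type iteration but never sets up the argument that closes this loop, and the possible presence of singular points of $\Sigma_j$ near $p$ is not addressed. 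Second, the mechanism you assign to the index hypothesis — that $\mathrm{Index}(\Sigma_j)=\mathrm{Index}(\Sigma)$ "forbids extra low modes at $p$, hence growth faster than $\gamma_1^-(C_p)$'' — is not a proof and is not how the hypothesis actually enters: its role is that, since the index of $\Sigma$ is realized by variations compactly supported away from $\Sing(\Sigma)$ and $\Sigma_j\to\Sigma$ smoothly there, the equal index of $\Sigma_j$ is exhausted away from the singular points, so $\Sigma_j$ is stable in small neighborhoods of each $p$; the growth bound then comes from that stability together with the strict positivity of $-L_\Sigma$ on $\scB_0(B_{r_p}(p))$ and the asymptotic-rate theory (the analogue of \cite[Lemma~3.14, Corollaries~3.15, 3.17]{wangDeformationsSingularMinimal2020}), not from a spectral count alone. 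As it stands, the decisive estimate is asserted rather than established, so the proposal does not constitute a proof of the lemma.
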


        The Jacobi field generated above could help us understand the behavior of $\Sigma_j$ near $\Sigma$. In particular, it depicts a picture where generically, one of the singular points of $\Sigma$ can be perturbed away as $\Sigma_j$.

        \begin{Lem}  \label{Lem_Pre_Perturb to find MH reg}
            In Lemma \ref{lemma:exist_Jac}, if we further assume that $\Sigma$ is nondegenerate, then there exists an open dense subset $\scF\subset C_c^\infty(M\setminus \Sing(\Sigma))$ depending only on $M, g$ and $\Sigma$ with the following property. 

            For every $f\in \scF$ and every sequence $c_j \rightarrow 0_+$, if $(M, g_j)$ admits $\Sigma_j$ as described above, then there exists a small neighborhood $U_p\subset M$ of some $p\in \Sing(\Sigma)$ such that $\Sing(\Sigma_j) \cap U_p = \emptyset$ for infinitely many $j$.
        \end{Lem}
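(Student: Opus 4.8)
The plan is to push the generalized Jacobi field furnished by Lemma~\ref{lemma:exist_Jac} through an asymptotic analysis at the singular points, choose $\scF$ so that for $f\in\scF$ this Jacobi field has a non‑vanishing first‑order asymptotic at \emph{some} $p\in\Sing(\Sigma)$, and then quote the deformation theory of strictly stable minimal cones to conclude that $\Sigma_j$ is regular near that $p$ for $j$ large.

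Given $f$ with $\nu(f)\vert_\Sigma\not\equiv 0$ and a sequence $\{\Sigma_j\}$ as in Lemma~\ref{lemma:exist_Jac}, we pass to a subsequence and obtain a nonzero generalized Jacobi field $u=u_0+\sum_{p\in\Sing(\Sigma)}a_p G_p$, $u_0\in\scB(\Sigma)$, $a_p\in\RR$, with $L_\Sigma u=c\,\nu(f)$, $c\geq 0$, together with graph functions $u_j$ and $t_j\to 0_+$ such that $t_j^{-1}u_j\to u$ in $C^2_{\mathrm{loc}}(\Sigma)$. Since $\supp f$ and the supports of $L_\Sigma G_q$ ($q\in\Sing(\Sigma)$) avoid a fixed neighbourhood of each $p\in\Sing(\Sigma)$, both $u_0$ and $L_\Sigma^{-1}(\nu(f)\vert_\Sigma)$ solve $L_\Sigma(\cdot)=0$ near every $p$, where they therefore admit Simon's asymptotic expansion \cite{simonAsymptoticsClassNonLinear1983}. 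Let $\gamma_1^-(C_p)<-\tfrac{n-2}{2}<\gamma_1^+(C_p)$ be the first pair of indicial roots of the regular strictly stable tangent cone $C_p$, with the ground‑state (positive) link eigenfunction, and for a $\scB(\Sigma)$‑Jacobi field $w$ near $p$ let $b_p(w)\in\RR$ be the coefficient of its $\gamma_1^+(C_p)$‑homogeneous leading term (a bounded linear functional, by interior elliptic estimates and the expansion). From the analysis of \cite[\S3--4]{wangDeformationsSingularMinimal2020}: near $p$, $u$ is asymptotic to a nonzero multiple of the $\gamma_1^-(C_p)$‑mode when $a_p\neq 0$ (the rate of $G_p$), while if $a_p=0$ then $u=u_0$ near $p$ is asymptotic to $b_p(u_0)$ times the $\gamma_1^+(C_p)$‑mode. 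Call $u$ \emph{$p$‑active} if $a_p\neq 0$, or if $a_p=0$ and $b_p(u_0)\neq 0$.

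For the choice of $\scF$, fix $p_0\in\Sing(\Sigma)$ and put $\ell(f):=b_{p_0}\bigl(L_\Sigma^{-1}(\nu(f)\vert_\Sigma)\bigr)$. This is a nonzero continuous linear functional on $C^\infty_c(M\setminus\Sing(\Sigma))$: indeed $\{\nu(f)\vert_\Sigma\}$ is dense in $L^2(\Sigma)$ (any $\psi\in C^\infty_c(\Sigma\setminus\Sing(\Sigma))$ is realized as $\nu(f)\vert_\Sigma$ via Fermi coordinates along $\Sigma$), $L_\Sigma^{-1}\colon L^2(\Sigma)\to\scB(\Sigma)$ is an isomorphism by nondegeneracy, and $b_{p_0}\not\equiv 0$ on $\scB(\Sigma)$ (a cut‑off of a $\gamma_1^+(C_{p_0})$‑homogeneous Jacobi field lies in $\scB(\Sigma)$ with $b_{p_0}\neq 0$). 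Define
\[
  \scF:=\Bigl\{f\in C^\infty_c(M\setminus\Sing(\Sigma)):\ \nu(f)\vert_\Sigma\not\equiv 0,\ \text{and}\ b_p\bigl(L_\Sigma^{-1}(\nu(f)\vert_\Sigma)\bigr)\neq 0\ \text{for some }p\in\Sing(\Sigma)\Bigr\}.
\]
It depends only on $M,g,\Sigma$, is open (each defining condition is), and contains the open dense set $\{\ell\neq 0\}$, hence is open and dense. Moreover, for $f\in\scF$ the resulting $u$ is $p$‑active for some $p$: if some $a_q\neq 0$, that $q$ works; otherwise all $a_q=0$, so $c>0$ (else $u=u_0=L_\Sigma^{-1}(0)=0$, contradicting $u\neq 0$) and $u_0=c\,L_\Sigma^{-1}(\nu(f)\vert_\Sigma)$, whence $b_p(u_0)=c\,b_p(L_\Sigma^{-1}(\nu(f)\vert_\Sigma))\neq 0$ for some $p$.

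It remains — and this is the crux — to upgrade $p$‑activeness into regularity of $\Sigma_j$ near $p$. Fix $p$ with $u$ $p$‑active, so $u$ has leading asymptotic at $p$ a \emph{nonzero} multiple of the first homogeneous Jacobi field of $C_p$ (rate $\gamma_1^+(C_p)$ or $\gamma_1^-(C_p)$, with sign‑definite link factor). On an annulus $A_r=(B_r(p)\setminus B_{r/2}(p))\cap\Sigma$, $r<r_p$, $\Sigma_j$ is the graph of $u_j$ with $t_j^{-1}u_j\to u$ in $C^2(A_r)$, so $\Sigma_j\cap A_r$ is, up to $o(t_j)$, the graph over $C_p$ of $t_j$ times that first Jacobi field. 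By the deformation and uniqueness theory for strictly stable minimal hypercones in \cite{wangDeformationsSingularMinimal2020} — resting on Hardt–Simon \cite{hardtAreaMinimizingHypersurfaces1985} and Simon's asymptotics \cite{simonAsymptoticsClassNonLinear1983} — the minimal hypersurfaces near $C_p$ exhibiting this first‑order asymptotic form a one‑parameter family of \emph{smooth} hypersurfaces (a Hardt–Simon‑type foliation, opened up precisely because the link factor is sign‑definite); since $\Sigma_j$ is minimal, converges to $\Sigma$ as varifolds, and matches a member of this family on $A_r$ to leading order, uniqueness forces $\Sigma_j\cap B_{r'}(p)$ ($r'<r$) to be a small $C^2$‑perturbation of a smooth leaf for all large $j$ along the subsequence; in particular $\Sing(\Sigma_j)\cap B_{r'}(p)=\emptyset$, proving the lemma with $U_p=B_{r'}(p)$. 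The main difficulty lies exactly here: the genericity and eigenvalue bookkeeping only place us in the clean first‑order regime, whereas concluding smoothness in that regime needs the full weighted‑space deformation theory of \cite{wangDeformationsSingularMinimal2020}, in particular the fact that among the homogeneous modes only the sign‑definite ground state genuinely resolves the singularity.
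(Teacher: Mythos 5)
Your overall strategy is the same as the paper's: extract the generalized Jacobi field $u$ from Lemma \ref{lemma:exist_Jac}, arrange by a generic choice of $f$ that $u$ has the extremal asymptotic rate $\gamma_1^{\pm}(C_p)$ at some $p\in\Sing(\Sigma)$, and then convert that into regularity of $\Sigma_j$ near $p$. Your genericity step (the leading-coefficient functional $b_p$ and the open dense set $\set{\ell\neq 0}$) is essentially a re-derivation of what the paper simply quotes as \cite[Lemma~3.21]{wangDeformationsSingularMinimal2020}; making $b_p$ well defined and continuous requires the asymptotic expansion theory for Jacobi fields on the conical end (not just interior elliptic estimates), but this is the same machinery and is not where the problem lies.

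The genuine gap is in your final, decisive step. You conclude that $\Sigma_j$ is regular near $p$ by invoking a ``Hardt--Simon-type foliation'' of smooth minimal hypersurfaces realizing the first-order asymptotics of $C_p$, together with a uniqueness/matching argument. The tangent cones $C_p$ here are only known to be stable regular cones (strict stability is not given), and for such cones no smooth Hardt--Simon foliation is available; as the paper stresses in its introduction, the existence of smooth leaves on a side of $C_p$ would force $C_p$ to be one-sided area-minimizing by the calibration argument of \cite{bombieriMinimalConesBernstein1969,lawlorSufficientCriterionCone2012} --- avoiding precisely this obstruction is the point of the global approach of \cite{wangDeformationsSingularMinimal2020}. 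Moreover, even granting the existence of leaves, agreement with a leaf ``to leading order'' on an annulus around $p$ does not imply that $\Sigma_j$ coincides with, or is a small $C^2$ perturbation of, a smooth leaf in a ball around $p$; that would require a genuine uniqueness/rigidity theorem at the relevant scale, which you neither state nor prove. The paper closes this step by citing \cite[Corollary~4.12]{wangDeformationsSingularMinimal2020}, which yields $\Sing(\Sigma_j)\cap U_p=\emptyset$ for infinitely many $j$ directly from $\cA\cR_p(u)\in\set{\gamma_1^-(C_p),\gamma_1^+(C_p)}$ (itself a consequence of \cite[Lemma~3.14]{wangDeformationsSingularMinimal2020}), without constructing any foliation. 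As written, your justification of the regularity conclusion does not go through; it is repaired by replacing the foliation/uniqueness reasoning with that corollary.
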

        \begin{proof}
            Let $u$ be an associated generalized Jacobi fields generated in the previous lemma. It follows from \cite[Corollary 3.15 \& 3.17]{wangDeformationsSingularMinimal2020} that the asymptotic rate of $u$ at $p\in \mathrm{Sing}(\Sigma)$ satisfies \[
             \cA\cR_p(u):= \sup\set{\sigma: \lim_{t \rightarrow 0_+} \int_{A_{t, 2t}(p) \cap \Sigma} u^2(x)\mathrm{dist}(x, p)^{-n-2 \sigma} = 0} \geq \gamma_1^-(C_p)\,,\]
            where $C_p$ is the tangent cone of $\Sigma$ at $p$, $\gamma_1^-(C_p)$ is a growth rate spectrum for Jacobi field on $C_p$ (See also \cite{simonIsolatedSingularitiesMinimal1982,caffarelliHardtSimon1984,hardtAreaMinimizingHypersurfaces1985}). 

            If $\cA\cR_p(u) > \gamma_1^+(C_p)$ for every $p\in \Sing(\Sigma)$, so then again by \cite[Corollary 3.15 \& 3.17]{wangDeformationsSingularMinimal2020}, we have $u\in \scB(\Sigma)$. Since $L_\Sigma$ is non-degenerate, then $u$ is the unique solution of $L_\Sigma u = c\nu(f)$ in $\scB(\Sigma)$ with $c\neq 0$. However, by \cite[Lemma~3.21]{wangDeformationsSingularMinimal2020}, the set $\scE$ of $f\in C_c^\infty(M\setminus \Sing(\Sigma))$ such that $\cA\cR_p(L_\Sigma^{-1}(\nu(f))) >\gamma_1^+(C_p)$ for some $p\in \Sing(\Sigma)$ is nowhere dense in $C_c^\infty(M\setminus \Sing(\Sigma))$. Hence, as long as we choose $\scF:= C_c^\infty(M\setminus \Sing(\Sigma)) \setminus \overline\scE$, this case could not happen.

            Therefore, for $f \in \scF$, there exists a singular point $p$ at which $\cA\cR_p(u)\leq \gamma_1^+(C_p)$, i.e., either $\cA\cR_p(u) = \gamma_1^-(C_p)$ or $\cA\cR_p(u) = \gamma_1^+(C_p)$ (\cite[Lemma~3.14]{wangDeformationsSingularMinimal2020}). Then it follows from \cite[Corollary~4.12]{wangDeformationsSingularMinimal2020} that for some neighborhood $U_p\supset p$, $\mathrm{Sing}(\Sigma_j)\cap U_p = \emptyset$ for infinitely many $j$.
        \end{proof}

%%%%%%%%%%%%%%%%%%%%%%%%%%%%%%%%%%%%%%%%%%%%%%%%%%%%%%%%%%%%%%
%%%%%%%%%%%%%%%%%%%%%%%%%%%%%%%%%%%%%%%%%%%%%%%%%%%%%%%%%%%%%%
%%%%%%%%%%%%%%%%%%%%%%%%%%%%%%%%%%%%%%%%%%%%%%%%%%%%%%%%%%%%%%
%%%%%%% Generation of Candidate Minimal Hypersurfaces %%%%%%%%
%%%%%%%%%%%%%%%%%%%%%%%%%%%%%%%%%%%%%%%%%%%%%%%%%%%%%%%%%%%%%%
%%%%%%%%%%%%%%%%%%%%%%%%%%%%%%%%%%%%%%%%%%%%%%%%%%%%%%%%%%%%%%
%%%%%%%%%%%%%%%%%%%%%%%%%%%%%%%%%%%%%%%%%%%%%%%%%%%%%%%%%%%%%%

\section{Generation of Candidate Minimal Hypersurfaces}\label{sec:gen}
    
    In this section, we shall discuss how to generate a candidate minimal hypersurface in a given Riemannian manifold $(M^{n+1}, g)$ with $H_n(M, \ZZ_2) = 0$. For simplicity, whenever it is clear, we shall abuse the use of set relations $=$ and $\subset$ for Caccioppoli sets in the measure-theoretic sense, i.e., up to a measure zero set.

    \subsection{Manifolds with Frankel Property}

        \begin{Def}
            A closed Riemannian manifold $(M^{n+1}, g)$ is said to have \textbf{Frankel property}, if any pair of minimal hypersurfaces has nonempty intersections.
        \end{Def}

        Theorem \ref{thm:ONVP} and Theorem \ref{thm:sing_index} together imply the existence of an (ONVP) sweepout $\Phi$ and a sequence $x_i \nearrow x_0 \in \mathbf{m}_L(\Phi)$ such that $|\partial \Phi(x_i)| \rightarrow V \in \cR$, where $V = \sum_i \kappa_i |\Sigma_i|$ for some pairwise disjoint minimal hypersurfaces $\Sigma_i$, where $\kappa_i \in \set{1, 2}$.

        If the ambient manifold $(M^8, g)$ has Frankel property, then $V = \kappa |\Sigma|$ for $\kappa \in \set{1, 2}$. Moreover, one of the following conditions holds:
        \begin{enumerate}
            \item either $\mathfrak{h}_{nm}(\Sigma) = \emptyset$;
            \item or $\mathfrak{h}_{nm}(\Sigma) \neq \emptyset$, $\kappa = 1$, and $\Sigma = \partial^*\Phi(x_0)$ is stable.
        \end{enumerate}

        In the second case, we will modify the sweepout such that near $\Sigma$ the mass of each slice is stricly smaller than $\operatorname{Area}(\Sigma)$, which provides a room for us to perturb the ambient metric without breaking the optimality of the sweepout.

        \begin{Lem}\label{lem:tech}
            For a closed ambient manifold $(M^8, g)$ with Frankel property, let $\Sigma$ be a minimal hypersurface generated from an (ONVP) sweepout $\Phi$ via $x_i \nearrow x_0 \in \mathbf{m}_L(\Phi)$. If $\mathfrak{h}_{nm}(\Sigma) = \set{p}$, we can construct a new (ONVP) sweepout $\Psi$ satisfying the following property.

            For any $r > 0$ small enough, there exists $\varepsilon_0 > 0$, an open set $U \supset \overline{\Sigma}$, and a compact set $K \subset U\cap B_r(p)$ containing $p$ such that for any $x \in [0, 1]$, we have
            \begin{equation}
                \mathbf{M}(\partial \Psi(x)) < \mathrm{Area}(\Sigma) - \varepsilon_0\,,
            \end{equation}
            provided that $\partial^* \Psi(x) \cap U \setminus (K \cup \overline{\Sigma}) \neq \emptyset$.
        \end{Lem}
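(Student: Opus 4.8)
We outline the construction. Since $\mathfrak{h}_{nm}(\Sigma)=\{p\}\neq\emptyset$ we are in the second alternative of the dichotomy recorded above, so $V=|\partial\Phi(x_0)|=|\Sigma|$ has multiplicity one and $\Sigma=\partial^*\Phi(x_0)$; after replacing $\Phi(x)$ by $M\setminus\Phi(1-x)$ if necessary (which preserves being a non‑excessive ONVP sweepout, cf. Theorem~\ref{thm:sing_index}) we may assume $\Phi(x_0)=M_+$, hence $\Phi(x)\subseteq M_+$ for $x\le x_0$, and that the failure of one‑sided minimality of $\Sigma$ at $p$ is on the $M_+$‑side. Using this failure we fix, for the given $r$, a nested family $\{\Omega(t)\}_{t\in[0,1]}\subset\cC(M)$ — e.g.\ the one‑sided perimeter‑minimizing foliation inside $B_{r/2}(p)$ — with $\Omega(0)=M_+$, $\Omega(t)\subseteq\Omega(s)\subseteq M_+$ for $t\ge s$, $\Omega(t)=M_+$ outside $B_{r/2}(p)$, $P(\Omega(t))\le\mathrm{Area}(\Sigma)$ for all $t$, and $P(\Omega(1))=\mathrm{Area}(\Sigma)-3\delta_1$ for some $\delta_1>0$; set $\Delta v=\mathrm{Vol}(M_+\setminus\Omega(1))$. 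Two structural facts about $\Phi$ will be used. \emph{(A) Confinement of criticality}: for every $\eta>0$ there is $\epsilon(\eta)>0$ with $\mathbf{M}(\partial\Phi(x))\le\cW-\epsilon(\eta)$ whenever $|x-x_0|\ge\eta$ — indeed a sequence of slices with mass $\to\cW$ subconverges as varifolds to a stationary integral varifold of mass $\le\cW$, which by the Frankel property (and the identification of $\Sigma$ above) is $|\Sigma|$, so the associated Caccioppoli sets converge in $L^1$ to $M_+$ or $M_-$ (Corollary~\ref{cor:sep_cacc}) and the times accumulate only at $x_0$ or $1-x_0$, the latter being excluded by nestedness and the volume normalization; moreover $\epsilon(\eta)\to0$ since $x_0\in\mathbf{m}_L(\Phi)$. \emph{(B)} Near $x_0$ the slices of $\Phi$ are close, in flat distance and in volume, to ``$M_+$ with an area‑decreasing modification supported near $p$'': along $\Phi(x_i)\to M_+$ with $\partial\Phi(x_i)\to|\Sigma|$, Allard regularity presents $\partial^*\Phi(x_i)$ as a small graph over $\Sigma$ away from $\Sing(\Sigma)$, and the second variation forces this graph to be $o(1)$ and the modification near the $q\neq p$ singular points to be trivial — otherwise $\mathbf{M}(\partial\Phi(x_i))$ would exceed $\cW$ — so all the ``missing'' volume concentrates near $p$.

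Now fix $\eta>0$ small enough that $\epsilon(\eta)<\tfrac12\delta_1$ and $\eta\,\mathrm{Vol}(M)<\Delta v$, put $\varepsilon_0:=\tfrac12\epsilon(\eta)$, let $U:=\{\mathrm{dist}(\cdot,\overline\Sigma)<\rho\}$ and let $K$ be a compact neighbourhood of $p$ with $K\subseteq\{\mathrm{dist}(\cdot,\Sigma)\le\rho/2\}\cap\overline{B_{r/2}(p)}\subseteq U\cap B_r(p)$, for $\rho>0$ small. Set $\Psi=\Phi$ outside a short interval $I=[x_0-\eta',x_0+\eta']$ (with $\eta'>\eta$ to be chosen) and, on $I$, replace $\Phi$ by a nested volume‑parametrized family built in three stages: \emph{(i)} run from $\Phi(x_0-\eta')$ into the family $\{\Omega(t)\}$ — by (B) the endpoint is already close to some $\Omega(t)$, so this is achieved with arbitrarily small perimeter overshoot (hence below $\cW-\varepsilon_0$) by combining Lemma~\ref{lem:interpolation} with a one‑sided outer‑minimizer, whose reduced boundary coincides with $\Sigma$ outside $B_{r/2}(p)$ because $\Sigma$ is one‑sided perimeter‑minimizing there (stability at regular points, and $q\notin\mathfrak{h}_{nm}(\Sigma)$ at the other singular points); \emph{(ii)} traverse $\{\Omega(t)\}$ with $t$ decreasing, so the enclosed volume rises to $\mathrm{Vol}(M_+)$ and the perimeter rises to $\cW$ at $\Omega(0)=M_+$, and then open a channel through $\Sigma$ \emph{inside} $B_{r/2}(p)$, spending the budget $3\delta_1$ released by the OSHAM failure and exploiting the concentration of $|A_\Sigma|$ at $p$ so that the perimeter stays $\le\cW$ while all the modification remains inside $B_{r/2}(p)$ (the set‑operation formulas of Theorem~\ref{thm:char} are used for the perimeter bookkeeping); \emph{(iii)} run, symmetrically, from the crossed configuration into $\Phi(x_0+\eta')$, again staying below $\cW-\varepsilon_0$. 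Here the matching of volumes and the preservation of nestedness throughout are carried out exactly as in the multiplicity‑one argument above (outer‑minimizers plus Lemma~\ref{lem:interpolation}); by construction $\sup_x\mathbf{M}(\partial\Psi(x))=\cW$, so $\Psi$ is again an ONVP sweepout.

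It remains to check the displayed estimate. If $x\notin I$ then $\mathbf{M}(\partial\Psi(x))=\mathbf{M}(\partial\Phi(x))\le\cW-\epsilon(\eta)<\cW-\varepsilon_0$, and the same holds for the slices of stages (i) and (iii); for all of these the implication is vacuously/trivially true regardless of whether $\partial^*\Psi(x)$ meets $U\setminus(K\cup\overline\Sigma)$. If $x\in I$ and $\mathbf{M}(\partial\Psi(x))\ge\cW-\varepsilon_0$, then $\Psi(x)$ is one of the slices of stage (ii), hence $\Psi(x)=M_+$ outside $B_{r/2}(p)$, so $\partial^*\Psi(x)\cap U\subseteq\big(\Sigma\setminus B_{r/2}(p)\big)\cup\big(B_{r/2}(p)\cap U\big)\subseteq\overline\Sigma\cup K$; therefore $\partial^*\Psi(x)\cap U\setminus(K\cup\overline\Sigma)=\emptyset$ and the hypothesis of the implication fails. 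This proves the lemma.

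The main obstacle is stage (ii): crossing the width‑realizing minimal hypersurface $\Sigma$ while keeping the perimeter $\le\cW$ and confining the deformation to $B_{r/2}(p)$. Because $\Sigma$ is minimal and stable, competitors obtained from parallel surfaces of $\Sigma$ raise the area only to second order, so the channel through $\Sigma$ must genuinely use the non‑area‑minimizing behaviour at $p$ — the budget $3\delta_1$ coming from the OSHAM failure together with the blow‑up of $|A_\Sigma|$ at the singular point. The subordinate difficulties are keeping the whole family simultaneously nested and volume‑parametrized while invoking the (non‑localized, non‑monotone) interpolation lemma — handled by the one‑sided outer‑minimizer device — and establishing the confinement facts (A) and (B), which rest on the Frankel property and on the constraint $\mathbf{M}(\partial\Phi)\le\cW$ together with the second variation of $\Sigma$.
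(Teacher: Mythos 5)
The central gap is your stage (ii). To pass from the slice $M_+$ into $M_-$ near $p$ while keeping the mass at most $\cW$, you invoke ``the budget $3\delta_1$ released by the OSHAM failure'' together with ``the concentration of $|A_\Sigma|$ at $p$''. Neither is a valid mechanism: the quantity $3\delta_1$ was produced by the one-sided family $\set{\Omega(t)}\subset M_+$, and it is of no use once the slice equals $M_+$ --- any competitor protruding into $M_-\cap B_{r/2}(p)$ must be compared with $\mathrm{Area}(\Sigma)$ afresh, and the blow-up of $|A_\Sigma|$ at a singular point gives no area decrease by itself (whether one can cross with controlled area is exactly the question of one-sided minimizing behaviour of the tangent cone, cf.\ the Hardt--Simon/calibration discussion in the introduction). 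The only legitimate tool is the failure of (OSHAM) on the \emph{other} side of $B_r(p)$, i.e.\ a second homotopic-minimizer family in $M_-\cap B_r(p)$ as in \cite[Lemma~13]{chodoshSingularBehaviorGeneric2020}; the paper never ``opens a channel'' with mass $\leq \cW$, it concatenates the $M_+$-side construction ending at $M_+$ with the mirrored $M_-$-side construction starting at $M_+$, and the slices with mass close to $\cW$ are harmless because their reduced boundaries stay inside $\overline\Sigma\cup K$. Relatedly, your ``WLOG the failure of one-sided minimality at $p$ is on the $M_+$-side'' cannot be arranged simultaneously with $\Phi(x_0)=M_+$ by the replacement $\Phi(x)\mapsto M\setminus\Phi(1-x)$; you need the failure on whichever side you are about to enter, so this point has to be addressed, not normalized away.

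There are also secondary gaps. Your fact (A) is not established: limits of slices of an optimal sweepout need not be stationary, so the uniform gap $\mathbf{M}(\partial\Phi(x))\leq\cW-\epsilon(\eta)$ for $|x-x_0|\geq\eta$ would require running the non-excessive critical-point analysis at every point of $\mathbf{m}(\Phi)$ and then a constancy/volume-parametrization argument; fact (B) (``the modification near the other singular points is trivial'') is asserted without proof. The paper's construction shows both are unnecessary: it replaces the \emph{entire} sweepout by its intersection with constrained minimizers $\Omega_2\subset\Omega_1$ (homotopic minimizers plus a constrained perimeter minimizer, separated by the strong maximum principles), so every original slice has reduced boundary outside the buffer $U=M_\pm\setminus\overline{\Omega_2}$ and the displayed implication is vacuous for it regardless of its mass; the only slices whose boundary enters $U\setminus(K\cup\overline\Sigma)$ come from the interpolation (Lemma~\ref{lem:interpolation}) between $\Omega_2$ and $\Omega_1$, whose mass is below $\mathrm{Area}(\Sigma)-\varepsilon_0$. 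Finally, your concrete choice of $U$ and $K$ does not support your own last step: the stage-(ii) boundaries inside $B_{r/2}(p)$ are not confined to the slab $\{\mathrm{dist}(\cdot,\Sigma)\leq\rho/2\}$, so $B_{r/2}(p)\cap U\not\subset K\cup\overline\Sigma$ and some high-mass slices violate the implication; $K$ must contain the whole region swept by the one-sided families, as in the paper's choice $K\cap M_\pm=M_\pm\setminus\mathrm{int}(\Omega_1^\pm)$.
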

        \begin{proof}
            By the definition of $\mathfrak{h}_{nm}(\Sigma)$, we can take a geodesic ball $B_r(p)$ with $r$ small enough such that $\partial B_r(p)$ is strictly convex, $\partial B_r(p) \cap \Sigma$ is a smooth codimension $2$ submanifold, and $\Sigma$ is not OSHAM on either side in $B_r(p)$.

            Since $H_7(M, \mathbb{Z}_2) = 0$, $\Sigma$ is two-sided and separates $M$ into two connected open component $M_+$ and $M_-$. By Corollary \ref{cor:sep_cacc}, they are both Caccioppoli sets. Because $\partial^*\Phi(x_0) = \Sigma$, by constancy theorem, w.l.o.g., we may assume that $\Phi(x_0) = M_+$ and thus, $\Phi(x) \subset M_+$ for $x \in [0, x_0]$. We will only focus on $M_+$, since the same process can be performed on $M_-$ as well.

            By the existence of homotopic minimizers \cite[Lemma~13]{chodoshSingularBehaviorGeneric2020} and the mean convexity of $\partial B_r(p)$, there exists a nested map $E:[0, 1] \rightarrow \cC(M)$ with $E(0) = \Omega_1, E(1) = M_+$ and $\Omega_1 \Delta M_+ \subset M_+\cap B_r(p)$ satisfying
            \begin{itemize}
                \item $\partial \Omega_1 \cap B_r(p)$ is minimal and strictly one-sided area minimizing in $M_+ \setminus \Omega_1$ (\cite[Lemma~15]{chodoshSingularBehaviorGeneric2020});
                \item $\mathbf{M}(\partial E(x)) \leq \mathrm{Area}(\Sigma)$;
                \item $\mathbf{M}(\partial \Omega_1) < \mathrm{Area}(\Sigma) - 2\varepsilon_0$, where $\varepsilon = \varepsilon_0(r) > 0$.
            \end{itemize}
            
            We can construct an intermediate nested sweepout $\{\Phi'(x)\}_{x \in [0, x_0]}$ by concatenating $\set{\Phi(x)\cap \Omega_1}_{x\in[0, x_0]}$ and $E(x)$, up to reparametrization. By the first bullet above, we have for $x \in [0, x_0]$,
            \[
                \mathbf{M}(\partial(\Phi(x) \cap \Omega_1)) \leq \mathbf{M}(\partial \Phi(x))\,.
            \]
            So together with the second bullet and a similar construction for $\{\Phi'(x)\}_{x \in [x_0, 1]}$ on $M_-$, $\{\Phi'(x)\}_{x \in [0, 1]}$ is still an optimal nested sweepout. 

            Then, let $\Sigma_1 = \partial^* \Omega_1$. Let $\tau > 0$ small enough depending on $\varepsilon_0$ and $U_0 = B_\tau(\Sigma_1)\cap \Omega_1$. By compactness of Caccioppoli sets, We can find a perimeter minimizer $\Omega_2$ with the constraint that $\Omega_1 - U_0 \subset \Omega_2 \subset \Omega_1$. 

            \begin{claim}
                $\partial^t \Omega_2 \cap \partial^t \Omega_1 = \emptyset$.
            \end{claim}
            \begin{proof}[Proof of laim 1.]
                As mentioned in Remark \ref{rem:red_topo_bdry}, we always assume that $\partial^t \Omega_1 = \overline{\partial^* \Omega_1}$ and $\partial^t \Omega_2 = \overline{\partial^* \Omega_2}$.

                We first note that $\Omega_2$ is also a perimeter minimizer with the constraint that $\Omega_1 - U_0 \subset \Omega_2 \subset M_+$. Indeed, if this is not true, we can find a perimter minimizer $\Omega'_2$ with $\Omega_1 - U_0 \subset \Omega'_2 \subset M_+$ such that
                \begin{equation}
                    \cH^{8}(\Omega'_2 \cap (M_+\setminus \Omega_1)) > 0\,.
                \end{equation}
                In other words, $\Omega'_2 \neq (\Omega'_2 \cap \Omega_1)$. However, by the first bullet above that $\partial \Omega_1$ is strictly one-sided area-minimizing in $M_+\setminus \Omega_1$, one can conclude
                \begin{equation}
                    P(\Omega'_2 \cap \Omega_1) < P(\Omega'_2)\,, 
                \end{equation}
                giving a contradiction.
                
                As $\Omega_2$ lies on one side of $M_+$, T. Ilmanen's strong maximum principle \cite{ilmanenStrongMaximumPrinciple1996} and Solomon-White strong maximum principle \cite{solomonStrongMaximumPrinciple1989} together imply that ${\partial^t\Omega_2\cap\overline\Sigma=\emptyset}$. 

                It suffices to verify that $\partial^t \Omega_2 \cap (\partial^t \Omega_1 \cap B_r(p)) = \emptyset$. Indeed, if this is not true, these maximum principles again imply
                \[
                    \partial^t \Omega_1\cap B_r(p)\subset \partial^t \Omega_2\,,
                \] so $\emptyset \neq \partial B_r(p) \cap \Sigma \subset \partial^t \Omega_2 \cap \Sigma$ contradicting to $\partial^t\Omega_2\cap\overline\Sigma=\emptyset$. 
            \end{proof}

            By taking $\tau > 0$ small enough, the interpolation lemma (Lemma \ref{lem:interpolation}) induces a nested map $E':[0, 1] \rightarrow \cC(M)$ with $E'(0) = \Omega_2, E'(1) = \Omega_1$ and $\mathrm{M}(\partial E'(x)) < \mathrm{Area}(\Sigma) - \varepsilon_0$, for any $x \in [0, 1]$.

            The desired sweepout $\set{\Psi(x)}_{x\in [0, x_0]}$ on $M_+$ is the reparameterized concatenation of $\set{\Phi'(x)\cap \Omega_2}_{x \in [0, x_0]}$, $\set{E'(x)}_{x \in [0,1]}$ and $\set{E(x)}_{x \in [0,1]}$. The conclusion holds on $M_+$, if $U \cap M_+ := M_+ \setminus \overline{\Omega_2}$ and $K \cap M_+ = M_+ \setminus \mathrm{int}(\Omega_1)$ with $r$ and $\varepsilon_0$ chosen above.

            The similar process can be done on $M_-$, and the new (ONVP) sweepout by $\set{\Psi(x)}_{x \in [0,1]}$, which satisfies the property. 
        \end{proof}
        \begin{Rem}\label{rem:tech}
            Due to Frankel property and monotonicity formula for minimal hypersurfaces, the open subset $U \setminus K$ can be chosen such that every minimal hypersurface $\Sigma'$ other than $\Sigma$ intersects $U \setminus K$. 
        \end{Rem}

        With the modification above, we can perturb the Riemmanian metric to obtain the unique realization property of min-max width.

        \begin{Lem}[Unique realization of min-max width] \label{lem:dichotomy}
            Given a closed Riemannian manifold $M^8$ with $H_7(M, \mathbb{Z}_2) = 0$, let $g$ be a metric in 
            \[
                \mathrm{int}(\scG_F) = \mathrm{int}\set{g \in \scG| (M, g) \text{ has Frankel property}}\,.
            \]
            Let $\Sigma$ be a minimal hypersurface realizing the min-max width $\cW$ generated from an (ONVP) sweepout as in Theorem \ref{thm:sing_index}. If $\mathfrak{h}_{nm}(\Sigma) =\set{p}$, then $\forall \varepsilon > 0$, there exists a metric $g' \in \mathrm{int}(\scG_F)$ with $\|g - g'\|_{C^k} < \varepsilon$ satisfying that $\cW(M, g')$ is uniquely realized by $\Sigma$ if generated from an (ONVP) sweepout. Furthermore, $\Sigma$ can be taken nondegenerate in $(M, g')$.
        \end{Lem}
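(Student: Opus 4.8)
The plan is a ``perturbation by hand'' in two stages: first make $\Sigma$ nondegenerate while keeping it a min-max realizer, then, inside the region freed up by Lemma~\ref{lem:tech}, perturb so that every competing min-max hypersurface gains area while $\Sigma$ itself is left untouched. Note at the outset that, since $\mathfrak{h}_{nm}(\Sigma)=\{p\}\neq\emptyset$, Theorem~\ref{thm:sing_index} forces $\mathrm{Index}(\Sigma)=0$ and multiplicity one, so $\Sigma$ is stable, two-sided, separates $M=M_+\cup M_-$, and $\cW=\mathrm{Area}_g(\Sigma)$. For \emph{Stage 1}, if $0$ is an eigenvalue of $-L_\Sigma$ it is the bottom one, with a positive eigenfunction $\phi_0\in\scB(\Sigma)$; I would fix a regular point $q\in\Sigma\setminus\Sing(\Sigma)$ away from $p$ and deform $g$ conformally by $(1+sw)g$ with $w$ supported in a small ball about $q$, vanishing together with its normal first derivative along $\Sigma$ (this keeps $\Sigma$ minimal of unchanged area) and with normal Hessian $\partial_\nu^2 w|_\Sigma$ negative on $\{\phi_0>0\}$ (this enters $Q_\Sigma$ through the Ricci term and, by first-variation of eigenvalues, pushes the bottom eigenvalue strictly positive). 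Since the only slices of the optimal sweepout $\Psi$ of Lemma~\ref{lem:tech} meeting $\supp w$ are either $|\Sigma|$ itself, whose area is unchanged, or slices already of mass $<\cW-\varepsilon_0$, one checks $\cW$ is unchanged and $\Sigma$ remains a min-max realizer. Relabelling the new metric $g$, now $\Sigma$ is nondegenerate and strictly stable; by the rigidity theory for minimal hypersurfaces with isolated singularities in \cite{wangDeformationsSingularMinimal2020}, $\Sigma$ is isolated among minimal hypersurfaces, and a monotonicity/constancy argument in thin tubes yields $\eta>0$ such that no minimal hypersurface $\neq\Sigma$ of area $\leq\cW$ is contained in $N_\eta(\overline\Sigma)$.

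For \emph{Stage 2}, apply Lemma~\ref{lem:tech} to obtain $\Psi$, an open $U\supset\overline\Sigma$ with $U\subset N_\eta(\overline\Sigma)$, a compact $K\ni p$, and $\varepsilon_0>0$; by Remark~\ref{rem:tech} arrange that $U\setminus K$ meets every minimal hypersurface $\neq\Sigma$, and put $V:=(U\setminus K)\setminus\overline{N_{\eta/2}(\overline\Sigma)}$, an open set disjoint from $\overline\Sigma$ and from a neighborhood of $p$. Stage 1, Remark~\ref{rem:tech}, and unique continuation then give that every minimal hypersurface $\neq\Sigma$ of area $\leq\cW$ meets $V$. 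By Theorem~\ref{thm:sing_index} the realizers of $\cW$ coming from (ONVP) sweepouts have area $\cW$ (or $\cW/2$ with multiplicity two) and satisfy $\cH^0(\mathfrak{h}_{nm})+\mathrm{Index}\leq1$, hence form a compact family; I would then apply a generic nonnegative conformal perturbation supported in $V$ (a Sard--Smale/bumpy-metric argument in the singular setting, cf.\ \cite{wangDeformationsSingularMinimal2020}) to make all of them except $\Sigma$ nondegenerate, so that finitely many remain, $\Sigma_1,\dots,\Sigma_N$, each meeting $V$, while $\cW$ is unchanged (lower bound: the metric has increased; upper bound: along $\Psi$ only slices already below $\cW-\varepsilon_0$ are affected). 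After relabelling, pick $f\geq0$ smooth with $\supp f$ a compact subset of $V$ and $f>0$ somewhere on each $\Sigma_i$, and set $g_t:=(1+tf)g$. For $t$ small, $g_t\in\mathrm{int}(\scG_F)$, $g_t\equiv g$ on $N_{\eta/2}(\overline\Sigma)$, $\cW(M,g_t)=\cW=\mathrm{Area}_{g_t}(\Sigma)$ (same sandwich), and $\Sigma$ stays minimal, nondegenerate, of area $\cW$.

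For \emph{Stage 3}, nondegeneracy of each $\Sigma_i$ provides a unique nearby $g_t$-minimal hypersurface $\Sigma_{i,t}$ with $\mathrm{Area}_{g_t}(\Sigma_{i,t})=\mathrm{Area}_g(\Sigma_i)+\tfrac n2\,t\!\int_{\Sigma_i}\!f+O(t^2)>\cW$ for small $t>0$ (in the multiplicity-two case the doubled surface also exceeds $\cW$). Suppose $\Sigma_{t_j}$, $t_j\to0$, is a realizer of $\cW(M,g_{t_j})$ from an (ONVP) sweepout with $\Sigma_{t_j}\neq\Sigma$. By compactness and Frankel, $\Sigma_{t_j}\to\kappa_0|\Sigma_0|$ with $\kappa_0\,\mathrm{Area}_g(\Sigma_0)=\cW$; lower semicontinuity of index and of $\cH^0(\mathfrak{h}_{nm})$ shows $\Sigma_0$ is a realizer of $\cW(M,g)$, so $\Sigma_0\in\{\Sigma,\Sigma_1,\dots,\Sigma_N\}$. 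If $\Sigma_0=\Sigma_i$, then $\Sigma_{t_j}=\Sigma_{i,t_j}$ for $j$ large, whose area exceeds $\cW$, contradicting that it realizes $\cW$. If $\Sigma_0=\Sigma$, then $\Sigma_{t_j}\subset N_{\eta/2}(\overline\Sigma)$ for $j$ large, a region where $g_{t_j}\equiv g$, so $\Sigma_{t_j}$ is a minimal hypersurface of $(M,g)$ of area $\leq\cW$ contained in $N_\eta(\overline\Sigma)$ and distinct from $\Sigma$, contradicting Stage 1. Hence for $t$ small $g':=g_t$ has $\cW(M,g')$ uniquely realized (from ONVP sweepouts) by the nondegenerate $\Sigma$.

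The main obstacle is the singular-geometry input from \cite{wangDeformationsSingularMinimal2020} used in Stages 1 and 2: the rigidity of a nondegenerate minimal hypersurface with isolated singularities (that no other minimal hypersurface lies nearby, together with its thin-tube refinement) and the transversality statement that a generic metric perturbation supported in a fixed open set meeting all the relevant minimal hypersurfaces makes them nondegenerate. The other delicate point is bookkeeping: every perturbation of Stage 2 must be nonnegative and supported in the \emph{fixed} set $V$ so that the ``$g_t\ge g$'' comparison which pins $\cW(M,g_t)=\cW$ keeps applying, and in Stage 1 one must check that the sign-changing perturbation near $\Sigma$ (unavoidable, since a nonnegative one vanishing on $\Sigma$ can only lower the bottom eigenvalue) does not lower $\cW$ --- which is exactly where the precise structure of the sweepout $\Psi$ from Lemma~\ref{lem:tech} is needed.
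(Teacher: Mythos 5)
Your overall architecture (exploit the sweepout from Lemma~\ref{lem:tech}, perturb conformally in the freed-up region $\tilde U$ to penalize competitors, and separately perturb near a regular point of $\Sigma$ with a factor vanishing to second order on $\Sigma$ to gain nondegeneracy) matches the paper's, but two steps as written do not go through.

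First, the sign in your Stage 1 is backwards. For a conformal change $e^{2\varphi}g$ with $\varphi$ and $d\varphi$ vanishing along $\Sigma$, one has along $\Sigma$ that $\mathrm{Ric}(\nu,\nu)$ changes by $-(n)\,\partial^2_\nu\varphi$ (with $n+1$ the ambient dimension), while $|A_\Sigma|^2$ and the induced metric are unchanged. Hence a \emph{nonnegative} factor vanishing on $\Sigma$ (so $\partial^2_\nu\varphi\ge 0$, positive somewhere) \emph{raises} the bottom eigenvalue of $-L_\Sigma$ --- contrary to your parenthetical claim that it ``can only lower'' it --- and your choice $\partial^2_\nu w<0$ pushes the eigenvalue strictly \emph{negative}, not positive. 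That is doubly fatal: it makes $\Sigma$ unstable (incompatible with $\mathfrak{h}_{nm}(\Sigma)=\{p\}$ and the index bound of Theorem~\ref{thm:sing_index} for ONVP realizers, and with the stable nondegenerate class $\tilde{\scM}$ the lemma feeds into), and, because your $w$ is sign-changing, the comparison $g'\ge g$ is lost, so you only control $\cW$ from above via $\Psi$; nothing prevents some other sweepout of the perturbed metric from having smaller max, and your own closing remark concedes this lower bound is the unresolved point. The paper resolves both issues at once by taking $f_2(x)=\mathrm{dist}(x,\Sigma)^2\eta$ and $g_2=\exp(\varepsilon_2 f_2)g_1\ge g_1$: nonnegativity pins the width from below while the positive normal Hessian makes $\Sigma$ strictly stable, following the argument of \cite[Lemma~4]{marquesEquidistributionMinimalHypersurfaces2019}.

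Second, your Stage 2--3 uniqueness mechanism leans on a Sard--Smale/bumpy-metric theorem for \emph{singular} minimal hypersurfaces and on compactness/finiteness of the family of width-realizers; neither is available (the paper explicitly notes the absence of a White-type structure theorem in the singular setting and is designed to avoid it), so this is a genuine missing ingredient rather than a routine citation. The paper's uniqueness argument needs no transversality and no finiteness: after the perturbation $g_1=(1+\varepsilon_1 f_1)^2 g$ with $f_1>0$ exactly on $\tilde U$, suppose an ONVP sweepout $\Psi'$ in $g_2$ produces a realizer $\Sigma'\neq\Sigma$; by the strong maximum principles and Frankel, $\Sigma$ lies on neither side of $\Sigma'$. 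Viewing $\Psi'$ in the metric $g$ (still optimal nested, since $g_2\ge g$ and the widths agree) and applying Theorem~\ref{thm:sing_index} yields a $g$-minimal realizer $\Sigma''$ lying on one side of $\Sigma'$ or equal to it, hence $\Sigma''\neq\Sigma$; by Remark~\ref{rem:tech} it meets $\tilde U$, so its $g_2$-mass strictly exceeds its $g$-mass $=\cW(M,g)=\cW(M,g_2)$, contradicting optimality of $\Psi'$. You would need either to supply the transversality/compactness results you invoke or to replace Stages 2--3 by an argument of this softer kind.
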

        \begin{proof}
            By Lemma \ref{lem:tech}, we obtain a new sweepout $\Psi$ for $\Sigma$, a positive constant $\varepsilon_0 > 0$, and an open subset $\tilde U = U\setminus (K\cup \overline{\Sigma})$ therein.

            Firstly, let's choose a smooth function $f_1 \in C^\infty(M)$ such that $f_1$ is positive in $\tilde U$, vanishing outside $\tilde U$ and
            \begin{equation}\label{eqn:small}
                 (1+f_1)^7 \mathrm{Area}_g(\Sigma)\leq \mathrm{Area}_g(\Sigma) + \varepsilon_0 / 2\,.
            \end{equation}
            Note that if $\varepsilon_1\in (0, 1)$ small enough, $g_1 = (1 + \varepsilon_1 f_1)^2 g$ is still inside $\mathrm{int}(\scG_F)$. 

            Moreover, $\Psi$ is also an (ONVP) sweepout on $(M, g')$. Indeed, the metric only changes in $\tilde U$, so any slice intersecting $\tilde U$ will now has mass no greater than
            \begin{equation}
                (1 + \varepsilon_1 f_1)^7 (\mathrm{Area}_g(\Sigma) - \varepsilon_0) \leq \mathrm{Area}_g(\Sigma) - \varepsilon_0 / 2\,.
            \end{equation}
            Hence, $\cW(M, g_1) \leq \cW(M, g)$. Because $g_1 \geq g$, by definition, $\cW(M, g_1) \geq \cW(M, g)$ and we can conclude that $\cW(M, g_1) = \cW(M, g)$.
            
            Then, Let $q \in \Sigma$ and $r > 0$ small enough such that $B_r(q) \cap \overline\Sigma$ is regular and $B_r(q) \subset U$. We can choose another nonnegative smooth function $f_2 \in C^\infty_c(B_r(q))$ such that \[f_2(x)=\mathrm{dist}(x, \Sigma)^2 \eta(\mathrm{dist}(x, q))\,,\] where $\eta$ is a standard cut-off function. It is not hard to check that for $\varepsilon_2 > 0$ small enough, $g_2 = \mathrm{exp}(\varepsilon_2 f_2) g_1 \in \mathrm{int}(\scG_F)$ and similarly, $\cW(M, g_2) = \cW(M, g)$. Furthermore, the same proof of \cite[Lemma~4]{marquesEquidistributionMinimalHypersurfaces2019} together with the choice of $f_2$ implies that $\Sigma$ is nondegenerate in $(M, g_2)$.

            Finally, let $\Psi'$ be another (ONVP) sweepout in $(M, g_2)$ with $y_i \nearrow y_0 \in \mathbf{m}_L(\Psi')$, such that
            \begin{equation}
                |\partial \Psi'(y_i)| \rightarrow \kappa'|\Sigma'| \in \cR, \quad \kappa' \in \set{1,2}\,.
            \end{equation}
            It suffices to show that $\Sigma' = \Sigma$, and thus $\kappa' = 1$.

            Suppose by contradiction that $\Sigma' \neq \Sigma$; Also note that $\Sigma' \cap \Sigma \neq \emptyset$ since $g'\in \mathrm{int}(\scG_F)$ and thus, $\Sigma$ does not lie on either side of $\Sigma'$ by strong maximum principles \cite{ilmanenStrongMaximumPrinciple1996,solomonStrongMaximumPrinciple1989}. Let's take $\Psi'$ back to the original metric $g$ and obviously, $\Psi'$ is still an optimal nested sweepout, albeit $\kappa'|\Sigma'|$ may not be inside the critical set. Theorem \ref{thm:sing_index} implies that there exists a sequence $y'_i \rightarrow y'_0$ with $\kappa''|\Sigma''| = \lim_i |\partial \Psi(y'_i)| \in \cR$ realizing the min-max width with $\kappa'' \in \set{1, 2}$. By the nested property, we know that $\Sigma''$ should lie on one side of $\Sigma'$ or coincide with $\Sigma'$. Since $\Sigma' \neq \Sigma$ and $\Sigma$ does not lie on either side of $\Sigma'$, we see that $\Sigma'' \neq \Sigma$. By Frankel property of $(M, g)$, $\Sigma'' \cap \Sigma \neq \emptyset$. As mentioned in Remark \ref{rem:tech}, $\Sigma'' \cap \tilde U \neq \emptyset$ and thus,
            \begin{equation}
                \|\Sigma''\|(M, g_2) > \|\Sigma''\|(M, g) = \cW(M, g) = \cW(M, g_2)\,,
            \end{equation}
            contradicting the optimality of $\Psi'$.

            In summary, $g' = g_2$ is the desired metric.
        \end{proof}
    
    \subsection{Manifolds without Frankel Property}
        If $M$ doesn't have Frankel property, then there exist two minimal hypersurfaces $\Sigma_1$ and $\Sigma_2$ such that $\Sigma_1 \cap \Sigma_2 = \emptyset$. The topological assumption $H_7(M^8, \mathbb{Z}_2) = 0$ implies that $\Sigma_1$ and $\Sigma_2$ are two-sided and each of them separates the ambient manifold.

        According to Ilmanen's strong maximum principle \cite{ilmanenStrongMaximumPrinciple1996}, we can further obtain that
        \begin{equation}
            \overline{\Sigma}_1 \cap \overline{\Sigma}_2 = \emptyset\,.
        \end{equation}
        
        The goal of this subsection is to prove the following result.
        
        \begin{Prop}
            Given a closed Riemannian manifold $(M^{n+1}, g)$ without Frankel property but with $H_n(M, \mathbb{Z}_2) = 0$, there exists a locally one-sided area-minimizing hypersurface $\Sigma$ in $M$.
        \end{Prop}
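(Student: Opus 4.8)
The plan is to use the two disjoint minimal hypersurfaces $\Sigma_1,\Sigma_2$ as two-sided barriers and to minimize perimeter among the Caccioppoli sets trapped between them; the resulting perimeter minimizer will produce the hypersurface $\Sigma$. First I would fix the geometry. Replacing $\Sigma_1,\Sigma_2$ by connected components, we may assume both are connected; as recalled above they are then two-sided, each separates $M$, and $\overline\Sigma_1\cap\overline\Sigma_2=\emptyset$. Write $M\setminus\overline\Sigma_1=A_1\sqcup A_2$ with $\Sigma_2\subset A_1$, and $M\setminus\overline\Sigma_2=B_1\sqcup B_2$ with $\Sigma_1\subset B_1$. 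Using the disjointness and connectedness of $\overline\Sigma_1,\overline\Sigma_2$ one checks that $\overline\Sigma_2\subset A_1$, that $B_2\subset A_1$, and that the ``slab'' $W:=A_1\cap B_1=A_1\setminus\overline{B_2}$ is open with topological boundary $\partial^t W=\overline\Sigma_1\cup\overline\Sigma_2$; moreover $A_2\neq\emptyset$, so $\emptyset\neq B_2\subsetneq A_1\subsetneq M$.

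Next I would minimize. Consider the nonempty class $\cA:=\set{\Omega\in\cC(M): B_2\subseteq\Omega\subseteq A_1}$, with inclusions understood modulo $\cH^{n+1}$-null sets, and minimize the perimeter $P(\cdot)$ over $\cA$. Since $M$ is compact a minimizing sequence is precompact in $L^1$, the two constraints are stable under $L^1$-convergence, and $P$ is lower semicontinuous, so a minimizer $\Omega_*$ exists; since $0<\cH^{n+1}(\Omega_*)<\cH^{n+1}(M)$ and $M$ is connected, $P(\Omega_*)>0$. Any compactly supported modification of $\Omega_*$ inside $W$ remains in $\cA$, so $\Omega_*$ is a local perimeter minimizer in $W$; hence $\Gamma:=\partial^t\Omega_*\cap W$ is an area-minimizing boundary in $W$, and by the interior regularity theory for area-minimizing boundaries \cite{federerGeometricMeasureTheory1969} it is a smooth embedded hypersurface away from a closed set of codimension at least $7$.

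Finally I would analyze $\Omega_*$, in two cases. If $\Gamma\neq\emptyset$: by Ilmanen's strong maximum principle \cite{ilmanenStrongMaximumPrinciple1996} together with the Solomon--White maximum principle \cite{solomonStrongMaximumPrinciple1989}, $\overline\Gamma$ cannot touch $\partial^t W=\overline\Sigma_1\cup\overline\Sigma_2$, since otherwise it would locally coincide with $\overline\Sigma_1$ or $\overline\Sigma_2$, which is impossible as $\Gamma\subset W$; thus $\overline\Gamma\subset W$ is a closed minimal hypersurface that is locally area-minimizing, hence a fortiori locally one-sided area-minimizing, and we take $\Sigma:=\overline\Gamma$. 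If $\Gamma=\emptyset$: then $P(\Omega_*;W)=0$, so $\partial^*\Omega_*\subseteq\overline\Sigma_1\cup\overline\Sigma_2$ up to an $\cH^n$-null set, and near any point of $\Sigma_i\cap\partial^t\Omega_*$ the set $\Omega_*$ locally coincides with one fixed side of $\Sigma_i$; hence $\partial^*\Omega_*\cap\Sigma_i$ is open and closed in the connected manifold $\Sigma_i$. Since $P(\Omega_*)>0$, one of $\overline\Sigma_1,\overline\Sigma_2$, say $\overline\Sigma$, is entirely contained in $\partial^t\Omega_*$; because $\Omega_*\subseteq A_1$ and $\Omega_*\supseteq B_2$, the competitors in $\cA$ localized near $\overline\Sigma$ realize exactly the deformations of $\overline\Sigma$ into the slab $W$ (including in small balls about $\Sing(\Sigma)$, where the relevant constraint is vacuous), so the minimality of $\Omega_*$ forces $\Sigma$ to be locally one-sided area-minimizing on the $W$-side. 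Either way the proposition follows.

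The existence and interior regularity of $\Omega_*$ are routine and dimension-independent. I expect the main obstacles to be the topological bookkeeping around the slab $W$, the verification in the degenerate case $\Gamma=\emptyset$ that $\partial^t\Omega_*$ must contain an entire component $\overline\Sigma_i$ and that one-sided minimality persists across $\Sing(\Sigma_i)$, and the careful application of the strong maximum principle at singular points of $\overline\Sigma_1$ and $\overline\Sigma_2$.
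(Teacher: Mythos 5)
Your overall strategy is the same as the paper's (minimize area in the region trapped between the two disjoint minimal hypersurfaces and invoke the Ilmanen and Solomon--White maximum principles at touching points), recast as a two--sided obstacle problem $B_2\subseteq\Omega\subseteq A_1$ instead of homological minimization in the closure of a complementary component $N$ with $\partial^t N=\overline\Sigma_1\cup\overline\Sigma_2$. The degenerate case $\Gamma=\emptyset$, which the paper never has to face, is handled correctly in outline: there $P(\Omega_*;W)=0$ forces $\Omega_*$ to coincide near an entire $\overline\Sigma_i$ with its $W$-side, and the admissible competitors then reproduce exactly the one-sided deformations of $\Sigma_i$ into $W$, so that $\Sigma_i$ itself is locally one-sided area-minimizing.

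The genuine gap is in the case $\Gamma\neq\emptyset$, at the step ``$\overline\Gamma$ cannot touch $\partial^t W$, since otherwise it would locally coincide with $\overline\Sigma_1$ or $\overline\Sigma_2$, which is impossible as $\Gamma\subset W$.'' This is not what the maximum principles give, and as stated it is not a contradiction. If $\mathrm{spt}(\partial\Omega_*)$ touches $\overline\Sigma_i$ at a regular point, Solomon--White (applied after one justifies that $\partial\Omega_*$ is stationary near the contact set, which itself uses minimality of the barriers) yields $\overline\Sigma_i\subseteq\mathrm{spt}(\partial\Omega_*)$, and Ilmanen only rules out touching confined to $\Sing(\Sigma_1)\cup\Sing(\Sigma_2)$. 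The resulting configuration --- part of $\partial\Omega_*$ attached to the barrier, part detached into $W$ --- is perfectly compatible with $\Gamma\subset W$, since the coinciding portion simply does not belong to $\Gamma$; so no contradiction has been derived, and your Case~1 conclusion does not follow as written. To close this you must either (a) do what the paper does: first reduce WLOG to the situation where neither $\Sigma_1$ nor $\Sigma_2$ is locally one-sided area-minimizing (otherwise the proposition is already proved), after which the containment $\overline\Sigma_i\subseteq\mathrm{spt}(\partial\Omega_*)$ contradicts minimality of $\Omega_*$, because a local one-sided deformation of $\Sigma_i$ into $W$ strictly decreasing area produces an admissible competitor of smaller perimeter; or (b) prove directly that a connected component of $\mathrm{spt}(\partial\Omega_*)$ meeting $W$ cannot contain $\overline\Sigma_i$, which requires a unique-continuation/contact-set argument at regular points of $\Sigma_i$ together with Ilmanen's principle at the isolated singular points to exclude free sheets accumulating only there. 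Option (a) is the cleaner fix and, as in the paper, it also makes your degenerate case largely superfluous.
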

        \begin{proof}
            Take $\Sigma_1$ and $\Sigma_2$ as above. If either of them is locally one-sided area-minimizing, then we are done.

            Suppose the neither of them is locally one-sided area-minimizing, and due to the separateness, there exists a connected component $N$ in $M \backslash(\overline{\Sigma}_1 \cup \overline{\Sigma}_2)$, such that $\partial N = \overline{\Sigma}_1 \cup \overline{\Sigma}_2$. Let $\Sigma$ be an area minimizer in $[\Sigma_1] \in H_7(\overline{N}, \mathbb{Z}_2)$. We claim that $\overline\Sigma \subset \mathrm{int}(N)$.

            On the one hand, $\Sigma$ can not be either $\Sigma_1$ or $\Sigma_2$ since they are not one-sided homologically area-minimizing.

            On the other hand, if $\overline \Sigma$ touches $\overline{\Sigma}_1$ or $\overline{\Sigma}_2$, then the intersection set should not be entirely inside $\mathrm{Sing}(\Sigma_1) \cup \mathrm{Sing}(\Sigma_2)$ due to Ilmanen's strong maximum principle again. However, if $\overline{\Sigma} \cap \Sigma_i \neq \emptyset$, Solomon-White's strong maximum principle \cite{solomonStrongMaximumPrinciple1989} implies that $\overline\Sigma_i \subset \overline\Sigma$, contradicting the area-minimizing property of $\Sigma$.

            In summary, $\overline\Sigma \subset \mathrm{int}(N)$ and is homologically area-minimizing in $\overline{N}$. Therefore, $\Sigma$ is also locally area-minimizing in $M$.
        \end{proof}

 \section{Singular Capacity of Minimal Hypersurfaces} \label{sect:sing_cap}
    Let $\scC$ be the space of stable minimal hypercones in $\RR^8$. By the standard dimension reduction argument, every cone in $\scC$ has a smooth cross section with $S^7(1)$. 

    Let $\scM$ be the space of triples $(\Sigma; M, g)$, where $(M, g)$ is an open subset of a Riemannian manifold and $\Sigma$ is a minimal hypersurface in $(M, g)$ with finitely many singular points. The topology on $\scM$ is induced by $C^4_{loc}$ convergence in $g$ with fixed $M$ and multiplicity one varifold convergence in $\Sigma$.

    \begin{Def} \label{Def_Sing Cap}
        A map $\SCap : \scM \to \NN$ is called a \textbf{singular capacity}, if 
        \begin{enumerate}
            \item[(i)] For every nontrivial $C\in \scC$ and every open subset $U\subset \RR^8$ containing the origin, we have\[
                    1\leq \SCap (C; \RR^8, g_{Euc}) = \SCap(C; U, g_{Euc}) <+\infty;    \]
                where $g_{Euc}$ is the Euclidean metric. We abbreviate for simplicity $\SCap (C; \RR^8, g_{Euc})$ to $\SCap(C)$;
            \item[(ii)] For every $(\Sigma; M, g)\in \scM$, 
                \begin{align*}
                    \SCap(\Sigma; M, g) := \sum_{p\in \mathrm{Sing}(\Sigma)} \SCap(C_p)\,,
                \end{align*}
                where $C_p$ is the unique tangent cone of $\Sigma$ at $p$ (conventionally, $\SCap(\Sigma; M, g) := 0$ if $\Sigma$ is smooth);
            \item[(iii)] If $(\Sigma_j; M, g_j) \to (\Sigma; M, g)$ in $\scM$ and $\Sigma_j$ is stable in $(M, g_j)$, then for every open subset $U\subset \subset M$ with $\partial U \cap \Sing(\Sigma) = \emptyset$, \[
                \SCap(\Sigma; U, g) \geq \limsup_{j\to \infty} \SCap(\Sigma_j; U, g_j)\,.   \]
        \end{enumerate}
    \end{Def}
  
    The main goal of this section is to prove the existence of singular capacity on $\scM$.
    
    \begin{Thm} \label{Thm_Sing Cap_Main}
        There exists a singular capacity $\SCap$ on $\scM$ satisfying the following condition.

        For every $\Lambda\geq 1$, there exists $N(\Lambda)\in \NN$ such that 
        \[
            \SCap(C) \leq N(\Lambda)
        \]
        for every $C\in \scC$ with density at $0$ less than or equal to $\Lambda$.
    \end{Thm}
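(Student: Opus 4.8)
The plan is to construct $\SCap$ by a ``stratified counting with weights'' scheme, defined inductively on the density at the vertex of cones in $\scC$. Since the density $\Theta_C(0)$ of any stable minimal hypercone $C\in\scC$ takes values in a discrete set bounded below by $1$ and above on any bounded range, and since a blow-up/monotonicity argument shows that if $C'$ appears as a tangent cone to a cross-sectional degeneration of $C$ then $\Theta_{C'}(0)\le \Theta_C(0)$ with equality only if $C'=C$, I would enumerate the ``layers'' of $\scC$ by increasing density. On the bottom layer, where no nontrivial degeneration into distinct singular cones can occur, set $\SCap(C):=1$. Inductively, having defined $\SCap$ on all cones of density $<\Lambda$, define for $C$ with $\Theta_C(0)=\Lambda$
\[
    \SCap(C) := 1 + \sup\Big\{\ \sum_{i} \SCap(C_i)\ \Big\},
\]
where the supremum runs over all finite collections $\{C_i\}\subset\scC$ with $\Theta_{C_i}(0)<\Lambda$ that can simultaneously arise as the tangent cones at the singular points of some stable minimal hypersurface $\Sigma$ in a Euclidean ball that is a small $C^4$-graphical-plus-varifold perturbation of $C$. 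The finiteness of this supremum, hence $\SCap(C)<\infty$ and the bound $\SCap(C)\le N(\Lambda)$, is exactly the assertion of the theorem and will follow from a compactness argument: by Schoen--Simon--Wickramasekera compactness for stable minimal hypersurfaces of bounded density, the set of such $\Sigma$ is sequentially compact, the number of singular points is uniformly bounded (using the $\varepsilon$-regularity of \cite{schoenRegularityStableMinimal1981,wickramasekeraGeneralRegularityTheory2014} together with the density gap separating singular from regular points), and each singular cone $C_i$ that occurs has $\Theta_{C_i}(0)\in[1,\Lambda)$; induction on $\Lambda$ then closes the bound.

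Then I would verify the three axioms. Axiom (ii) is taken as the \emph{definition} of $\SCap$ on $\scM$ in terms of the cone values, so it holds tautologically once the cone values are defined; axiom (i)'s finiteness and lower bound $\SCap(C)\ge 1$ are immediate from the construction ($\SCap(C)=1+\text{(nonneg.)}\ge1$), and the scale/localization invariance $\SCap(C;\RR^8,g_{Euc})=\SCap(C;U,g_{Euc})$ holds because $\Sing(C)=\{0\}$, so the sum in (ii) has a single term regardless of $U\ni 0$. The real content is the upper semicontinuity axiom (iii). Here I would argue as follows. Suppose $(\Sigma_j;M,g_j)\to(\Sigma;M,g)$ in $\scM$ with each $\Sigma_j$ stable, and fix $U\subset\subset M$ with $\partial U\cap\Sing(\Sigma)=\emptyset$. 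Near each regular point of $\Sigma$ in $U$, Schoen--Simon regularity forces $\Sigma_j$ to be smooth for large $j$, so $\Sing(\Sigma_j)\cap U$ accumulates only at $\Sing(\Sigma)\cap U=\{p_1,\dots,p_m\}$. Choose disjoint small balls $B_\rho(p_\ell)\subset U$. It suffices to show that for each $\ell$ and all large $j$,
\[
    \sum_{q\in\Sing(\Sigma_j)\cap B_\rho(p_\ell)} \SCap\big((C_q)_{\Sigma_j}\big)\ \le\ \SCap(C_{p_\ell})\,.
\]
Rescale at $p_\ell$: the blow-up of $\Sigma$ is the cone $C_{p_\ell}$, and since $g_j\to g$ in $C^4_{loc}$ and $\Sigma_j\to\Sigma$ as varifolds, for large $j$ (after the same rescaling) $\Sigma_j$ is a stable minimal hypersurface in a metric $C^4$-close to Euclidean, lying in a thin neighborhood of $C_{p_\ell}$, i.e. it is precisely a ``perturbation of $C_{p_\ell}$'' of the type appearing in the supremum defining $\SCap(C_{p_\ell})$. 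Its singular cones all have density strictly less than $\Theta_{C_{p_\ell}}(0)$ (strict, again by the monotonicity/rigidity: a singular point with the full density would force $\Sigma_j$ to coincide with a copy of $C_{p_\ell}$, contradicting smoothness away from that point for the perturbed object — or is handled by shrinking $\rho$), so their $\SCap$-values are already defined, and the displayed inequality is immediate from the definition of $\SCap(C_{p_\ell})$ as $1+\sup(\cdots)$. Summing over $\ell$ gives (iii). (One technical point: the metric after rescaling is only close to Euclidean, not equal; I would absorb this by proving from the start that $\SCap(C)$ could equivalently be defined allowing perturbations in near-Euclidean $C^4$ metrics, and checking that this does not change the value, via a further compactness/continuity argument in the metric.)

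The main obstacle I anticipate is making the inductive ``supremum over perturbations'' both well-posed and finite: one must ensure (a) the density values of cones in $\scC$ are genuinely well-ordered near any $\Lambda$ so the induction has no infinite descent — this uses that densities of stable cones form a closed set with no accumulation from below below any bound, itself a consequence of compactness of $\scC$ with bounded density plus upper semicontinuity of density; (b) the number of singular points of a stable $\Sigma$ perturbing a fixed $C$ in a fixed ball is uniformly bounded — this is the quantitative $\varepsilon$-regularity/monotonicity input and is where the density bound $\Lambda$ is essential, since the ``singular excess'' is distributed among points each carrying at least a fixed amount of density drop; and (c) that the degenerate cones $C_i$ appearing genuinely have strictly smaller density, so the recursion terminates — the borderline case $\Theta_{C_i}(0)=\Theta_C(0)$ must be excluded by a Simon-type unique-tangent-cone rigidity argument, or else absorbed by observing it contributes to the ``$1+$'' rather than the sum. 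Once these three compactness facts are in hand, the construction, the finiteness bound $N(\Lambda)$, and all three axioms follow by the routine arguments sketched above.
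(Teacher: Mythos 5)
Your construction rests on two load-bearing claims that are asserted rather than proved, and at least the first is not something you can get from the tools you cite. (a) You found the induction on the assumption that the densities $\theta(0;\|C\|)$, $C\in\scC$, form a discrete (well-ordered) set; compactness of $\scC_\Lambda$ plus upper semicontinuity of density only gives that the density spectrum is a closed subset of $[1,\Lambda]$, which may perfectly well have accumulation points, so your layer-by-layer recursion is not well-founded as stated (and discreteness of the density spectrum of stable hypercones in $\RR^8$ is, to our knowledge, open). (b) The finiteness of your supremum --- equivalently a bound, uniform over $C\in\scC_\Lambda$, on the number of singular points of a stable hypersurface that is a small perturbation of $C$ --- is the actual content of the theorem, and your suggested justification (mass bound plus the density gap separating singular from regular points) does not give it: densities at distinct singular points live at different scales and do not add up against the total mass, so a priori singular points can cluster at arbitrarily small scales. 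Ruling this out is exactly what Lemma \ref{Lem_Sing Cap_Uniform bd on |Sing|} does, by selecting a singular point, rescaling at the scale where the density drop saturates, and running an induction on density quantized in steps of a definite size $\delta_2$; that argument in turn needs the multiplicity-one compactness of $\scC_\Lambda$ (Lemma \ref{Lem_Sing Cap_Cone only converges to multi 1 cone}, via Sharp's compactness and the positive Ricci curvature of $\SSp^7$) and the quantitative density-gap dichotomy of Corollary \ref{Cor_Sing Cap_Density Gap of subseq sing} --- ingredients absent from your sketch.

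Two further points where your scheme, as opposed to the paper's, runs into trouble. Because $\delta_2$ gives a \emph{definite} density drop whenever two or more singular points survive the perturbation, the paper can define $\SCap$ to be constant on density bands $[\Lambda_{k-1},\Lambda_k)$ via the explicit product $\prod_{j\leq k}(1+N(\Lambda_j))$; this simultaneously sidesteps the discreteness issue and makes the single-singular-point case of axiom (iii) follow from mere upper semicontinuity of density (the nearby tangent cone has density $<\Lambda_k$ for large $j$, hence no larger $\SCap$). In your scheme, by contrast, a perturbation with exactly one singular point whose tangent cone has density equal to, or merely converging to, $\theta(0;\|C\|)$ is not covered by the supremum defining $\SCap(C)$ (which only ranges over strictly smaller densities), and your fallback rigidity claim ``equal density forces $C'=C$'' is not established for tangent cones of a \emph{different} hypersurface converging to $C$, nor is monotonicity of your $\SCap$ in the density. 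These gaps would need to be closed before the recursion defines anything, let alone yields the uniform bound $N(\Lambda)$.
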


    We start with a quantitative cone rigidity lemma, inspired by Cheeger-Naber \cite[Theorem 7.3]{cheegerQuantitativeStratificationRegularity2013}. Let
    \begin{align*}
        \theta(x, r; \mu) &:= \frac{\mu(B_r(x))}{r^7}\,,\\
        \theta(x; \mu) &:= \lim_{r \rightarrow 0_+}\theta(x, r; \mu)\,,\\
        \BB_r &:= \BB^8_r(0) \subset \RR^8\,,\\
        \scC_{\Lambda} &:=\{C\in \scC : \theta(0; \|C\|) \leq \Lambda\}\,.
    \end{align*}
    For simplicity, given two varifold $V_1$ on $(M, g)$ and $V_2$ on $(M, g_{Euc})$ with uniform volume bound, as long as $g$ and $g_{Euc}$ are close enough, we will view $V_2$ as a varifold on $(M, g_{Euc})$ and define $\mbfF(V_1, V_2)$ in $(M, g_{Euc})$.

    \begin{Lem} \label{Lem_Sing Cap_Cone rigidity}
    For any $\Lambda, \varepsilon>0$, there exists $\delta_1=\delta_1(\Lambda, \varepsilon)>0$ such that if $\Sigma$ is a stable minimal hypersurface in $(\BB_5, g)$ with $\|\Sigma\|(\BB_5)\leq \Lambda$ and $0 \in \overline\Sigma$ satisfying
    \begin{enumerate}
        \item[(i)] $\theta(0, 4; \|\Sigma\|) - \theta(0, 1; \|\Sigma\|) \leq \delta_1$;
        \item[(ii)] $\|g - g_\mathrm{Euc}\|_{C^4}\leq \delta_1$.
    \end{enumerate}

    Then there exists $C\in \scC$, $m\geq 1$ such that $\Sigma$ is $C^2$ $\varepsilon$-close to $m$ pieces of $C$ in $\AAa_{2,3}$ and $|\Sigma|$ is $\mbfF_{\BB_4}$ $\varepsilon$-close to $m|C|$.
    \end{Lem}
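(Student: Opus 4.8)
The plan is to prove Lemma \ref{Lem_Sing Cap_Cone rigidity} by a contradiction and compactness argument, using the monotonicity formula to extract an exact cone as a blow-up limit. Suppose the statement fails: then for some $\Lambda, \varepsilon > 0$ there is a sequence $\delta_1^{(j)} \to 0$ and stable minimal hypersurfaces $\Sigma_j$ in $(\BB_5, g_j)$ with $\|\Sigma_j\|(\BB_5) \leq \Lambda$, $0 \in \overline{\Sigma_j}$, satisfying $\theta(0, 4; \|\Sigma_j\|) - \theta(0, 1; \|\Sigma_j\|) \leq \delta_1^{(j)}$ and $\|g_j - g_{\mathrm{Euc}}\|_{C^4} \leq \delta_1^{(j)}$, but such that no exact cone $C \in \scC$ and no integer $m \geq 1$ has $\Sigma_j$ being $C^2$ $\varepsilon$-close to $m$ pieces of $C$ in $\AAa_{2,3}$ with $|\Sigma_j|$ being $\mbfF_{\BB_4}$ $\varepsilon$-close to $m|C|$.

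First I would invoke the Schoen-Simon compactness theory for stable minimal hypersurfaces with bounded area and the a priori singular set estimate (recall the conventions fixed in the excerpt): after passing to a subsequence, $|\Sigma_j| \to V$ as varifolds, where $V = m|C_\infty|$ for some stationary integral varifold $C_\infty$ in $(\BB_5, g_{\mathrm{Euc}})$ which is stable and smooth away from a set of codimension $\geq 7$, and the convergence is $C^2_{\mathrm{loc}}$ with multiplicity $m$ on the regular part; moreover the density bound $\Lambda$ persists. The key point is that the smallness of the frequency-type gap in hypothesis (i), combined with the $C^4$-closeness of $g_j$ to the Euclidean metric, forces the limit $V$ to be a \emph{cone}: indeed, the Euclidean monotonicity formula gives, for the flat-metric limit, $\theta(0, 4; \|V\|) = \theta(0, 1; \|V\|)$, and the rigidity case of monotonicity then implies $V$ is dilation-invariant about $0$, i.e. $C_\infty$ is a cone, hence $C_\infty \in \scC$. (Here one uses that $g_j \to g_{\mathrm{Euc}}$ in $C^4$ so that the monotone quantity for $g_j$ differs from the Euclidean one by an error controlled by $\delta_1^{(j)} \to 0$; this requires care because $0$ may be a singular point of $C_\infty$, but the monotonicity formula and its rigidity statement hold at singular points of stationary integral varifolds.) By the standard dimension reduction, $C_\infty$ has a smooth cross-section with $S^7(1)$, consistent with the description of $\scC$ at the start of the section.

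Given this, for $j$ large the $C^2_{\mathrm{loc}}$ convergence on the regular part yields that $\Sigma_j \cap \AAa_{2,3}$ is, after the convergence has kicked in, a union of $m$ graphical pieces over (the $m$-sheeted cover of) $C_\infty \cap \AAa_{2,3}$ with $C^2$-norm of the graph function tending to $0$; in particular $\Sigma_j$ is $C^2$ $\varepsilon$-close to $m$ pieces of $C_\infty$ in $\AAa_{2,3}$ for $j$ large, and $\mbfF_{\BB_4}(|\Sigma_j|, m|C_\infty|) \to 0 < \varepsilon$. Taking $C := C_\infty$ contradicts the failure assumption. The main obstacle I anticipate is the justification that the limit is an \emph{exact} cone: one must make the monotonicity rigidity quantitative enough to survive the perturbation of the metric, i.e. show that an almost-monotone quantity for the nearly-Euclidean metrics $g_j$ forces the Euclidean monotone quantity of the limit to be exactly constant on $[1,4]$, and then that constancy at a (possibly singular) point of a stationary integral varifold implies the conical structure — this is where the Cheeger-Naber quantitative stratification philosophy \cite{cheegerQuantitativeStratificationRegularity2013} enters, though for the qualitative statement here the classical monotonicity rigidity suffices once the metric error is absorbed. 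A secondary technical point is upgrading varifold convergence to $C^2$ graphical convergence on the annulus $\AAa_{2,3}$, which follows from Allard-type regularity together with the Schoen-Simon estimates once one knows the limit is regular there (which holds for $j$ large since $\AAa_{2,3}$ stays away from $0$, and the only possible singularities of $C_\infty$ in $\BB_5 \setminus \{0\}$ would again be removed by dimension reduction as $\dim \Sing \leq 0$ for a cone with smooth link).
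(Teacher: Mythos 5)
Your proposal is correct and follows essentially the same route as the paper: a contradiction/compactness argument via Schoen--Simon, with the density gap (i) and the $C^4$-closeness of the metrics forcing the limit varifold to be a stable cone by monotonicity rigidity, and the Schoen--Simon sheeting/regularity theorem upgrading varifold convergence to $C^2$ closeness on $\AAa_{2,3}$ for large $j$, contradicting the assumed failure. The extra technical caveats you flag (absorbing the metric error into an almost-monotonicity statement, regularity of the annular region) are handled implicitly in the paper by citing \cite{schoenRegularityStableMinimal1981}, so no substantive difference.
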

    \begin{proof}
        This is essentially a corollary of \cite{schoenRegularityStableMinimal1981}. Indeed, if this is false, we can find a sequence $\set{\Sigma_i}$ with $\delta_1(\Sigma_i) \leq \frac{1}{i}$ but $\varepsilon$ far away from any multiple pieces of any cone $C \in \scC$. By Schoen-Simon's compactness theorem \cite[Theorem~2]{schoenRegularityStableMinimal1981}, we know that $|\Sigma_i| \rightarrow V \in \cR$ in the varifold sense in $\BB_\frac{9}{2}$, and the support of V is a stable minimal hypersurface in $(\BB_\frac{9}{2}, g_\mathrm{Euc})$. Moreover, we have
        \begin{equation}
            \theta(0, 4; \|V\|) - \theta(0, 1; \|V\|) = 0\,,
        \end{equation}
        which implies that $C = \mathrm{supp}(V)$ is a stable minimal hypercone in $\mathbb{R}^8$. Thus, $C$ is smooth outside the origin.

        It follows immediately from \cite[Theorem~1]{schoenRegularityStableMinimal1981} that for $i$ large enough, $\Sigma_i$ is $C^2$ $\varepsilon$-close to $m$ pieces of $C$ in $\AAa_{2,3}$ and $|\Sigma|$ is $\mbfF_{\BB_4}$ $\varepsilon$-close to $m|C|$, contradicting to our assumption at the beginning of the proof.
    \end{proof}
  
    \begin{Lem} \label{Lem_Sing Cap_Cone only converges to multi 1 cone}
        For every $\Lambda>1$, there exists $\varepsilon(\Lambda)>0$ such that for any pair $C, C'\in \scC_\Lambda$ and every $m\geq 2$, \[
            \mbfF_{\BB_4}(|C'|, m|C|)\geq \varepsilon(\Lambda)\,.   \]   
    \end{Lem}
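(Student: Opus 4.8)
The plan is to argue by contradiction, exploiting the fact that a stable minimal hypercone in $\RR^8$ is smooth away from the origin together with a constancy/multiplicity argument. Suppose the statement fails. Then for $\Lambda > 1$ fixed there exist sequences $C_i, C_i' \in \scC_\Lambda$ and integers $m_i \geq 2$ such that $\mbfF_{\BB_4}(|C_i'|, m_i|C_i|) \to 0$. First I would observe that the density bound $\theta(0;\|C_i\|), \theta(0;\|C_i'\|) \leq \Lambda$ forces, via the monotonicity formula, uniform mass bounds $\|C_i\|(\BB_4), \|C_i'\|(\BB_4) \leq c(\Lambda)$; moreover the varifold convergence $|C_i'| \to m_i |C_i|$ (along a subsequence where $m_i \equiv m$ is constant, $2 \le m \le \Lambda$) and the mass of the limit being $m \cdot \|C_i\|(\BB_4)$ together with $\|C_i'\|(\BB_4) \le c(\Lambda)$ pin down that $\|C_i\|(\BB_4)$ stays bounded away from zero as well (the cones are nontrivial, so by monotonicity $\theta(0;\|C_i\|)\ge \theta(0, r) \ge \omega_7$ for the cross-section, giving a lower bound).

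Next, by Schoen-Simon compactness \cite{schoenRegularityStableMinimal1981} applied to the stable minimal hypersurfaces $C_i$ and $C_i'$ in $\BB_4$ (they are dilation-invariant, hence trivially have the requisite uniform bounds), I would pass to a further subsequence so that $|C_i| \to |C_\infty|$ and $|C_i'| \to |C_\infty'|$ in the varifold sense on $\BB_{9/2}$, where $C_\infty, C_\infty'$ are stable minimal hypersurfaces with optimal regularity; since each $C_i$ is a cone and cone structure passes to varifold limits (the limit is still dilation-invariant), $C_\infty, C_\infty' \in \scC$. Passing to the limit in $\mbfF_{\BB_4}(|C_i'|, m|C_i|) \to 0$ gives $|C_\infty'| = m|C_\infty|$ as varifolds on $\BB_4$ with $m \geq 2$. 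Now the point is to derive a contradiction from the existence of an integer-multiplicity relation $|C_\infty'| = m|C_\infty|$ between two genuine stable hypercones with $m\ge 2$: away from the origin, $C_\infty$ is a smooth embedded minimal hypersurface, so on the regular set $|C_\infty'| = m|C_\infty|$ forces $C_\infty'$ to be supported on $C_\infty$ with constant multiplicity $m$; but $C_\infty'$ is itself a varifold arising as a limit of \emph{embedded} hypersurfaces — more to the point, $C_\infty' \in \scC$ means it too has a smooth cross-section with $S^7$, so it is smooth and embedded away from $0$, which is incompatible with having multiplicity $m \ge 2$ everywhere on its regular part. (An embedded smooth hypersurface carries multiplicity exactly one as a varifold.) This contradiction completes the proof.

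I expect the main obstacle to be the passage-to-the-limit bookkeeping ensuring the limits are again nontrivial cones in $\scC$ rather than, say, the zero varifold, a multiplicity-$k$ plane, or a half-space/hyperplane degeneration: one must verify the lower density bound survives the limit (so $C_\infty \ne 0$) and that stability plus the dimension-reduction structure are preserved, and one must handle the constancy of $m_i$ along the subsequence. A secondary technical point is that Schoen-Simon compactness is stated for hypersurfaces in a ball with uniformly bounded area and a singular set hypothesis; since our $C_i$ are exact cones with an isolated singularity at $0$ this is immediate, but I would state it carefully so that the limit's singular set is controlled ($\cH^{n-7}$-measure, hence here just the origin). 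Everything else — the constancy theorem on the regular part of $C_\infty$, and the observation that a smooth embedded minimal hypersurface has unit multiplicity — is routine.
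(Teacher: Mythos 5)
Your setup (contradiction, density/mass bounds, Schoen--Simon compactness, constancy of $m_i$ along a subsequence, cones passing to the limit) matches the paper, but the final step contains a genuine gap that defeats the argument. You write that passing to the limit gives $|C_\infty'| = m|C_\infty|$ and that this is absurd because an embedded hypersurface carries multiplicity one. The flaw is that the varifold limit of the multiplicity-one varifolds $|C_i'|$ need \emph{not} be of the form $|C_\infty'|$ for a single cone $C_\infty'$: Schoen--Simon compactness only gives $|C_i'| \to k'|C_\infty'|$ for some integer $k' \geq 1$, and embedded hypersurfaces can perfectly well converge as varifolds with multiplicity $k' \geq 2$ (higher-multiplicity collapse is a real phenomenon, e.g.\ shrinking catenoids limiting onto a plane with multiplicity $2$; it is exactly what this lemma is designed to exclude for stable cones with bounded density). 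So the limit identity reads $k'|C_\infty'| = m k|C_\infty|$ with $C_\infty' = C_\infty$ as sets and $k' = mk \geq 2$, which is internally consistent and yields no contradiction. In other words, "each $C_i'$ is embedded, hence its limit has multiplicity one" is circular — it assumes the conclusion you are trying to prove.

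The missing idea, which is how the paper closes the argument, is to extract additional information from the fact that the smooth cones $C_j'$ converge with total multiplicity $\geq 2$ to $C_\infty$: near the smooth cross-section $S_\infty := C_\infty \cap \SSp^7$ the convergence is multi-sheeted graphical, and (as in Sharp's compactness argument \cite{sharpCompactnessMinimalHypersurfaces2017}) the renormalized difference of consecutive sheets produces a positive Jacobi field on $S_\infty$ viewed as a minimal hypersurface of $\SSp^7$. A positive Jacobi field forces $S_\infty$ to be stable in $\SSp^7$, which is impossible because $\SSp^7$ has positive Ricci curvature. Without some argument of this type (positive Jacobi field / no stable minimal hypersurfaces in positive Ricci), the higher-multiplicity limit cannot be ruled out, and your proof does not go through.
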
  
    \begin{proof}
        Otherwise, for some $\Lambda>1$, there are $C_j, C_j' \in \scC_\Lambda$ and $m_j\geq 2$ such that
        \begin{equation}
            \mbfF_{\BB_4}(|C_j'|, m_j|C_j|)\to 0\,.
        \end{equation}
        By the monotonicity formula for minimal hypersurfaces, we have
        \[
            2\leq m:=\limsup_j m_j <\infty\,.
        \]

        By Schoen-Simon's compactness \cite{schoenRegularityStableMinimal1981} again, up to a subsequence, $|C_j|\to m'|C_\infty|$ for some $C_\infty \in \scC_\Lambda$ and $m'\geq 1$. Hence $|C_j'|$ subconverges to $mm'|C_\infty|$ multi-graphically near the cross section $C_\infty\cap \SSp^7$. By Sharp's compactness \cite{sharpCompactnessMinimalHypersurfaces2017}, $\{C_j\}$ induces a positive Jacobi field over $S_\infty:= C_\infty\cap \SSp^7 \subset \SSp^7$. This implies $S_\infty\subset \SSp^7$ is stable, which is impossible since $\SSp^7$ has positive Ricci curvature.
    \end{proof}
  
    \begin{Cor} \label{Cor_Sing Cap_Density Gap of subseq sing}
        For every $\varepsilon\in (0,1)$,  there exists $\delta_2 = \delta_2(\Lambda, \varepsilon) \in(0, 1)$ such that if $\Sigma \subset (\BB_5, g)$ is a stable minimal hypersurface with $\|g-g_{Euc}\|_{C^4}\leq \delta_2$ and $\mbfF_{\BB_5}(|\Sigma|, |C|)\leq \delta_2$ for some $C\in \scC_{\Lambda}$, then $\mathrm{Sing}(\Sigma)\cap \BB_4 \subset \BB_{\varepsilon}$. Moreover, For any $x \in \BB_1\cap \overline\Sigma$, we have
        \begin{itemize}
             \item either $\theta(x; \|\Sigma\|) \leq \theta(0; \|C\|) - 2\delta_2$;
             \item or $\Sing(\Sigma)\cap \BB_4 \subset \{x\}$.
        \end{itemize}
    \end{Cor}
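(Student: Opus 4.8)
The plan is to prove the two assertions in turn, establishing the inclusion $\mathrm{Sing}(\Sigma)\cap\BB_4\subset\BB_\varepsilon$ first and then feeding it into the dichotomy; both parts go by contradiction and compactness. One may assume $C$ is nontrivial: if $C$ is a $7$-plane then for $\delta_2$ small Allard's regularity theorem forces $\Sigma$ to be a multiplicity-one graph over $C$ on $\BB_{9/2}$, so $\mathrm{Sing}(\Sigma)\cap\BB_4=\emptyset$ and both claims hold vacuously. For nontrivial cones there is also a uniform density gap: there is $\eta_0=\eta_0(\Lambda)>0$ with $\theta(0;\|C'\|)\ge\theta_0+\eta_0$ for every nontrivial $C'\in\scC_\Lambda$, where $\theta_0$ denotes the density of a multiplicity-one $7$-plane. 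Indeed, a sequence of nontrivial cones in $\scC_\Lambda$ with $\theta(0;\cdot)\to\theta_0$ would, by Schoen--Simon compactness, subconverge to a stationary cone of density $\theta_0$ at the origin, hence to a multiplicity-one plane, after which Allard's theorem makes the cones themselves regular at their vertices, a contradiction.

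\emph{Step 1: localization of the singular set.} Suppose the inclusion fails for some $\varepsilon$ and $\Lambda$. Then there are stable minimal hypersurfaces $\Sigma_j\subset(\BB_5,g_j)$ with $\|g_j-g_{Euc}\|_{C^4}\to0$, cones $C_j\in\scC_\Lambda$ with $\mbfF_{\BB_5}(|\Sigma_j|,|C_j|)\to0$, and $x_j\in\mathrm{Sing}(\Sigma_j)\cap(\BB_4\setminus\BB_\varepsilon)$; pass to a subsequence with $x_j\to x_\infty$, $|x_\infty|\in[\varepsilon,4]$. The $\mbfF$-bound makes $\|\Sigma_j\|$ uniformly mass-bounded in $\BB_{9/2}$. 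On a ball $B_\rho(x_\infty)$ with $\rho<\varepsilon$ the cones $C_j$ are smooth, stable, and area-bounded, being at distance $\ge\varepsilon-\rho>0$ from their vertices, so by the Schoen--Simon curvature estimates they subconverge smoothly there; the common varifold limit of $|\Sigma_j|$ and $|C_j|$ is therefore a (nonempty, by the uniform lower density bound) smooth minimal hypersurface with some integer multiplicity near $x_\infty$. The Schoen--Simon sheeting theorem then exhibits $\Sigma_j$ near $x_\infty$ as a union of smooth graphs for large $j$, contradicting $x_j\in\mathrm{Sing}(\Sigma_j)$, $x_j\to x_\infty$.

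\emph{Step 2: the dichotomy.} Let $x\in\BB_1\cap\overline\Sigma$. If $x$ is a regular point then $\theta(x;\|\Sigma\|)=\theta_0\le\theta(0;\|C\|)-\eta_0(\Lambda)\le\theta(0;\|C\|)-2\delta_2$ once $2\delta_2\le\eta_0(\Lambda)$, which is the first alternative; so assume $x\in\mathrm{Sing}(\Sigma)$ and that the first alternative fails, i.e. $\theta(x;\|\Sigma\|)>\theta(0;\|C\|)-2\delta_2$. Applying Step 1 with a small auxiliary radius $\varepsilon_1\le\varepsilon$ (depending only on $\Lambda$, at the cost of shrinking $\delta_2$) we get $x\in\BB_{\varepsilon_1}$ with $\varepsilon_1$ as small as we wish. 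From $\mbfF_{\BB_5}(|\Sigma|,|C|)\le\delta_2$ and $x\in\BB_{\varepsilon_1}$ one obtains $\theta(x,3;\|\Sigma\|)\le\theta(0;\|C\|)+o(1)$ as $\varepsilon_1,\delta_2\to0$. Combining this with the almost-monotonicity of $\rho\mapsto e^{\Lambda_0(g)\rho}\,\theta(x,\rho;\|\Sigma\|)$ (whose error $\Lambda_0(g)\to0$ with $\delta_2$) and the hypothesis $\theta(x;\|\Sigma\|)>\theta(0;\|C\|)-2\delta_2$, the density is nearly scale-invariant:
\[
\theta(x,4r;\|\Sigma\|)-\theta(x,r;\|\Sigma\|)=o(1)\quad\text{as }\varepsilon_1,\delta_2\to0,\ \text{uniformly over small }r.
\]
For each such $r$, rescale $\Sigma$ about $x$ by $1/r$: the rescaled metric stays $C^4$-close to $g_{Euc}$, the rescaled mass is bounded in terms of $\Lambda$ alone, and the displayed inequality is exactly hypothesis (i) of Lemma~\ref{Lem_Sing Cap_Cone rigidity} at unit scale. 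Taking $\delta_2$ small enough relative to the $\delta_1$ of that lemma (with its output accuracy chosen small), the lemma yields that $\Sigma$ is $C^2$-close, in the annulus $B_{3r}(x)\setminus\overline{B_{2r}(x)}$, to finitely many sheets of a stable cone whose vertex lies at $x$; in particular $\Sigma$ is regular there. As $r$ ranges over all small values, these annuli cover a punctured ball $B_{\rho_0}(x)\setminus\{x\}$ with $\rho_0$ a universal constant greater than $2$, so $\mathrm{Sing}(\Sigma)\cap B_{\rho_0}(x)\subset\{x\}$. Since Step 1 already placed $\mathrm{Sing}(\Sigma)\cap\BB_4$ inside $\BB_{\varepsilon_1}\subset\BB_1\subset B_{\rho_0}(x)$, we conclude $\mathrm{Sing}(\Sigma)\cap\BB_4\subset\{x\}$.

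\emph{Main obstacle.} The crux is the middle of Step 2: converting the single scalar inequality $\theta(x;\|\Sigma\|)>\theta(0;\|C\|)-2\delta_2$, together with the global $\mbfF$-closeness of $\Sigma$ to a cone, into the \emph{uniform} almost-monotonicity gap $\theta(x,4r;\|\Sigma\|)-\theta(x,r;\|\Sigma\|)=o(1)$ valid at \emph{every} small scale $r$ simultaneously. This means pinning the near-constancy of the density across an entire range of scales, controlling the monotonicity error uniformly in the metric perturbation, and verifying that the rescaled data stays inside the hypotheses of the quantitative cone-rigidity lemma for all admissible $r$; it is exactly this scale-uniformity that lets us iterate the lemma and rule out a second singular point near $x$.
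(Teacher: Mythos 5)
Your argument is correct, and it uses the same toolkit as the paper (Schoen--Simon compactness/sheeting, Allard regularity, almost-monotonicity, and the quantitative cone rigidity Lemma~\ref{Lem_Sing Cap_Cone rigidity}), but the organization differs in both halves. For the localization $\Sing(\Sigma)\cap\BB_4\subset\BB_\varepsilon$, the paper first invokes Lemma~\ref{Lem_Sing Cap_Cone only converges to multi 1 cone} to upgrade the contradiction sequence of cones to a multiplicity-one limit $C_\infty$ and then applies Schoen--Simon's theorem globally, whereas you localize near the limit point $x_\infty$, away from the vertices, and apply sheeting there directly; this sidesteps Lemma~\ref{Lem_Sing Cap_Cone only converges to multi 1 cone} at the cost of a slightly more careful statement about multi-sheeted limits, and both are fine. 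For the dichotomy, the paper argues by contradiction: it takes sequences violating both alternatives, shows (via upper semicontinuity of density, Allard, and $\theta(0;\|C_\infty\|)>1$) that the two putative singular points $x_j, x_j'$ both converge to $0$, and then applies Lemma~\ref{Lem_Sing Cap_Cone rigidity} \emph{once}, at the separation scale $r_j=\tfrac{2}{5}\mathrm{dist}(x_j,x_j')$, to conclude $x_j'$ is regular. You instead run a direct, non-compactness argument at the second stage: an explicit uniform density gap $\eta_0(\Lambda)$ for nontrivial cones disposes of regular $x$, and for singular $x$ you pinch $\theta(x,\rho;\|\Sigma\|)$ between $\theta(x;\|\Sigma\|)$ and $\theta(x,3;\|\Sigma\|)$ to verify hypothesis (i) of Lemma~\ref{Lem_Sing Cap_Cone rigidity} at \emph{every} scale $r\le 3/4$, then cover a punctured ball $B_{\rho_0}(x)\setminus\{x\}$ by the annuli $A_{2r,3r}(x)$. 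The underlying mechanism (density pinching across scales feeding the rigidity lemma near $x$) is identical; what your version buys is effectiveness --- no compactness extraction in the second step and an explicit punctured-ball regularity radius --- while the paper's single-scale contradiction is shorter and lets the scale be dictated by the two singular points themselves. Note also that this all-scales annulus-covering device is essentially what the paper does later inside the proof of Lemma~\ref{Lem_Sing Cap_Uniform bd on |Sing|}, so your write-up anticipates that step. The minor quantitative claims you leave implicit (the mass comparison $\theta(x,3;\|\Sigma\|)\le\theta(0;\|C\|)+o(1)$ from the $\mbfF$-bound via cutoffs, the rescaled mass bound $C(\Lambda)$, and the benign behavior of $\|g-g_{Euc}\|_{C^4}$ under dilation) are all routine and fixable as stated.
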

    \begin{proof}
        For the first claim, suppose otherwise, there exist $\Lambda\geq 1$, $\varepsilon\in (0,1)$, a family of stable minimal hypersurfaces $\set{\Sigma_j \subset (\BB_5, g_j)}$, a family of stable minimal hypercones $\set{C_j}\subset\scC_{\Lambda}$  such that $\|g_j - g_{Euc}\|_{C^4} \to 0$, $\mbfF_{\BB_5}(|\Sigma_j|, |C_j|) \to 0$ but $\mathrm{Sing}(\Sigma_j)\cap \BB_4 \setminus \BB_\epsilon \neq \emptyset$.

        By Lemma \ref{Lem_Sing Cap_Cone only converges to multi 1 cone}, $|C_j| \rightarrow |C_\infty| \in \scC$. Hence, \[
            \mbfF_{\BB_5}(|\Sigma_j|, |C_\infty|) \leq \mbfF_{\BB_5}(|\Sigma_j|, |C_j|)+ \mbfF_{\BB_5}(|C_j|, |C_{\infty}|) \to 0\,.   \]
        \cite[Theorem~1]{schoenRegularityStableMinimal1981} implies that for $j$ sufficiently large, $\mathrm{Sing}(\Sigma_j) \subset \BB_\varepsilon$ contradicting to our assumption.

        For the second claim, we also argue by contradiction that there exists a family of stable minimal hypersurfaces $\set{\Sigma_j \subset (\BB_5, g_j)}$ as above with $\set{x_j\in \overline\Sigma_j\cap \BB_1}$ such that $\limsup_{j \to \infty}\theta(x_j; \|\Sigma_j\|) - \theta(0; \|C_j\|) \geq 0 $ and $x'_j \in \mathrm{Sing}(\Sigma_j)\setminus \{x_j\} \neq \emptyset$. Let $C_\infty$ be the same limit cone as above.

        If $C_\infty$ is a hyperplane, then by Allard regularity theorem \cite{allardFirstVariationVarifold1972}, we have $\mathrm{Sing}(\Sigma_j)\cap \BB_4 = \emptyset$ for $j>>1$, which violates our assumption.

        If $C_\infty$ is a nontrivial minimal cone, then by upper semi-continuity of density, Allard regularity and the fact that 
        \[
            \limsup_j\theta(x_j; \|\Sigma_j\|) \geq \theta(0; \|C_\infty\|) > 1\,,
        \]
        we have $x_j \to 0$ and $x'_j \to 0$. Hence, by monotonicity formula, Lemma \ref{Lem_Sing Cap_Cone rigidity} can be applied to $(\eta_{x_j, r_j})_{\sharp} (\Sigma_j)$, where $r_j = 2\mathrm{dist}(x_j, x'_j)/5$ to deduce that for sufficiently large $j$, $x'_j \notin \mathrm{Sing}(\Sigma_j)$, which also violates our assumption. 
    \end{proof}
  
%  For $\epsilon>0$, call a sequence of minimal cones $\{C^{(i)}\}_{i=0}^q\subset \scC$ an \textbf{$\epsilon$-sequence} if $\theta(0; \|C^{(i)}\|) \leq \theta(0; \|C^{(0)}\|)+ \epsilon$ and $\mbfF(|C^{(i)}|\llcorner \BB_1, |C^{(i-1)}|\llcorner \BB_1) \leq \epsilon$, $\forall i\geq 1$. 

%  \begin{Cor} \label{Cor_Sing Cap_Conti deformed cone}
%   Let $\delta_2(\Lambda)$ be as in corollary \ref{Cor_Sing Cap_Density Gap of subseq sing}.  Then $\forall \epsilon>0$, $\exists\ \delta(\epsilon, \Lambda)\in (0, \delta_2)$ such that,
%   if $C\in \scC_{\Lambda}$, $\Sigma\subset (\BB_5, g)$ is a stable minimal hypersurface with $\mbfF(|\Sigma|, |C|\llcorner \BB_5) \leq \delta$, $\|g-g_{Euc}\|\leq \delta$ and $x\in Sing(\Sigma)\cap \BB_4$ satisfying $\theta(x; \|\Sigma\|) > \theta(0; \|C\|) - 2\delta_2$; 
%   Then there exist $q\in \NN$ and an $\epsilon$-sequence of minimal cones $C = C^{(0)}, C^{(1)}, C^{(2)}, ..., C^{(q)} = C_x(\Sigma)$ in $\scC$ such that  \[
%   \mbfF(|C_j|\llcorner \BB_4, (\eta_{x, 2^{-j}})_{\sharp} |\Sigma|\ \llcorner \BB_4) \leq \epsilon\ \ \ \ \forall j\geq 0    \] 
%  \end{Cor}
  
    \begin{Lem} \label{Lem_Sing Cap_Uniform bd on |Sing|}
        For every $\Lambda>1$, there exists $N(\Lambda)\geq 1$ such that, for every $C\in \scC_\Lambda$ and any sequence of stable minimal hypersurfaces $\Sigma_j \subset (\BB_6, g_j)$ satisfying $(\Sigma_j; \BB_6, g_j)\to (C; \BB_6, g_{Euc})$ in $\scM$, we have \[
            \limsup_{j\to \infty} \sharp (\mathrm{Sing}(\Sigma_j)\cap \BB_4) \leq N(\Lambda)\,.   \]
    \end{Lem}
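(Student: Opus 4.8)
The plan is to argue by contradiction and run a \emph{finite} iteration: each round will use the density-gap dichotomy (Corollary \ref{Cor_Sing Cap_Density Gap of subseq sing}) to force the density of the approximating cone to drop by a fixed amount $2\delta_2$, and since this density is trapped in a bounded interval $[\beta_0,\Lambda]$ — with $\beta_0>1$ the dimensional density gap of Allard's regularity theorem \cite{allardFirstVariationVarifold1972} — the iteration cannot continue forever, and this is the contradiction.

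\emph{Reduction to a single limit cone.} Fix $\varepsilon_0=\varepsilon_0(\Lambda)$ small and set $\delta_2:=\delta_2(\Lambda,\varepsilon_0)$ as in Corollary \ref{Cor_Sing Cap_Density Gap of subseq sing}. If the lemma failed, a diagonal argument would produce stable minimal hypersurfaces $\Sigma_j\subset(\BB_6,g_j)$ with $g_j\to g_{Euc}$ in $C^4$, cones $C_j\in\scC_\Lambda$ with $\mbfF_{\BB_5}(|\Sigma_j|,|C_j|)\to0$, and $\sharp(\Sing(\Sigma_j)\cap\BB_4)\to\infty$. By Schoen--Simon compactness \cite{schoenRegularityStableMinimal1981}, together with the argument using Sharp's compactness and the positivity of the Ricci curvature of $\SSp^7$ from the proof of Lemma \ref{Lem_Sing Cap_Cone only converges to multi 1 cone} (which forbids higher-multiplicity limits of cones), a subsequence satisfies $|C_j|\to|C_\infty|$ with $C_\infty\in\scC_\Lambda$, so $(\Sigma_j;\BB_6,g_j)\to(C_\infty;\BB_6,g_{Euc})$ in $\scM$ and a uniform density (hence area) bound holds on $\BB_5$. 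If $C_\infty$ were a hyperplane, Allard's theorem would give $\Sing(\Sigma_j)\cap\BB_4=\emptyset$ for large $j$, a contradiction; hence $C_\infty$ is a nontrivial stable cone of density $\theta_0:=\theta(0;\|C_\infty\|)\in(1,\Lambda]$. I will call such data — a sequence converging in $\scM$ to a nontrivial cone in $\scC_\Lambda$, with metrics $\to g_{Euc}$, a uniform density bound, and unbounded singular count in $\BB_4$ — a \emph{blow-up configuration} of density $\theta_0$.

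\emph{One round.} Given a blow-up configuration of density $\theta$: for large $j$ the hypotheses of Corollary \ref{Cor_Sing Cap_Density Gap of subseq sing} hold (with cone $C$, constant $\delta_2$), and since $\sharp(\Sing(\Sigma_j)\cap\BB_4)\ge2$ we are in its second alternative, so $\Sing(\Sigma_j)\cap\BB_4\subset\BB_{\varepsilon_0}$ and $\theta(p;\|\Sigma_j\|)\le\theta-2\delta_2$ for every $p\in\Sing(\Sigma_j)\cap\BB_4$. Since limits of singular points of $\Sigma_j$ must be singular points of $C$, i.e. its vertex, the outermost radius $\rho_j:=\sup\{|p|:p\in\Sing(\Sigma_j)\cap\BB_4\}$ tends to $0$; choosing an outermost singular point $p_j$ and rescaling $\Sigma_j':=(\eta_{p_j,\rho_j})_\sharp\Sigma_j$ (with the rescaled metric), all of the $n_j:=\sharp(\Sing(\Sigma_j)\cap\BB_4)$ singular points land in $\overline\BB_2$, $0\in\Sing(\Sigma_j')$, the metrics still converge to $g_{Euc}$, and the density bound is preserved by (almost-)monotonicity. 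Schoen--Simon compactness gives $|\Sigma_j'|\to|W|$ for a stable minimal $W$ with $\theta(0;\|W\|)\ge\beta_0$; $\Sing(W)\cap\overline\BB_2$ is finite and, by upper semicontinuity of density, contains every limit of singular points of $\Sigma_j'$, so by pigeonhole there is $q\in\Sing(W)\cap\overline\BB_2$ with $\sharp(\Sing(\Sigma_j')\cap\BB_\rho(q))\to\infty$ for every $\rho>0$ and $\theta(q;\|W\|)\le\theta-2\delta_2$. The unique regular tangent cone of $W$ at $q$ is a nontrivial cone of density $\le\theta-2\delta_2$; if it equals $k|C''|$ with $k\ge2$ and $C''\in\scC$, Schoen--Simon sheeting splits the rescalings of $\Sigma_j'$ near $q$ into $k$ components each converging to the multiplicity-one cone $C''$ (of density $\le(\theta-2\delta_2)/k$), and keeping the component carrying the most clustering singular points returns us to the multiplicity-one case; in all cases we obtain a cone $C'\in\scC$ of density $\le\theta-2\delta_2$. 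Finally, a diagonal choice of radii $\sigma_k\to0$ and indices $j(k)$ yields a new sequence $\big(\eta_{0,\sigma_k}\circ\eta_{q,r_0}\big)_\sharp\Sigma_{j(k)}'\to|C'|$ in $\scM$ that still carries all its (clustering) singular points in $\BB_4$ with unbounded count — i.e. a blow-up configuration of density $\le\theta-2\delta_2$.

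\emph{Termination and the main obstacle.} Iterating, after $r$ rounds we reach a blow-up configuration whose cone is nontrivial yet has density $\le\theta_0-2r\delta_2$; as nontriviality forces density $\ge\beta_0>1$, we must have $r\le(\Lambda-\beta_0)/(2\delta_2)$. But the previous paragraph produces a configuration of the same type from any given one, so the iteration never stops — a contradiction, and $N(\Lambda)$ exists. I expect the main difficulty to be the two limit passages in the iterative step. The last diagonalization is the delicate one: $\sigma_k$ must be small enough that the rescaled sequence is genuinely $\scM$-close to $C'$, but large enough that the infinitely many singular points clustering at $q$ remain inside $\BB_4$ after rescaling; these are compatible only because, for each fixed radius, the rescaled singular count at that scale still tends to infinity as the index grows. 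The secondary point is keeping multiplicity under control through the limits defining $W$ and $C'$, which rests on Schoen--Simon sheeting and on the positive-Ricci obstruction already used in Lemma \ref{Lem_Sing Cap_Cone only converges to multi 1 cone}.
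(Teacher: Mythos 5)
There is a genuine gap, and it sits exactly where the paper's proof does its real work: the choice of rescaling radius. You rescale at an outermost singular point $p_j$ by the outermost singular radius $\rho_j$, but then the limit $W$ of $(\eta_{p_j,\rho_j})_\sharp\Sigma_j$ is only an integral varifold with stable support, and when $\Lambda\geq 2$ nothing prevents it from carrying multiplicity $\geq 2$. Your two proposed remedies do not apply: Lemma \ref{Lem_Sing Cap_Cone only converges to multi 1 cone} concerns sequences of \emph{cones} (its proof produces a positive Jacobi field on the minimal cross-section in $\SSp^7$, which the non-conical $\Sigma_j'$ do not give), and Schoen--Simon sheeting is valid only near regular points of the limit, so it cannot ``split $\Sigma_j'$ into $k$ components'' across the vertex of a singular tangent cone $k|C''|$ --- the sheets may join up near the vertex, the surface can be connected there, and ``keep the component with the most singular points'' reduces nothing. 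Once multiplicity is lost, Corollary \ref{Cor_Sing Cap_Density Gap of subseq sing} (which needs $\mbfF$-closeness to a multiplicity-one $|C|$) and the $\scM$-convergence required to start the next round both break down. The paper avoids this precisely by choosing the rescaling radius $r_j$ as a density-drop stopping time, so that the cone-rigidity Lemma \ref{Lem_Sing Cap_Cone rigidity} applies at \emph{every} intermediate scale $s\in(r_j,1]$, and then chaining consecutive scales against the gap $\varepsilon(\Lambda)$ of Lemma \ref{Lem_Sing Cap_Cone only converges to multi 1 cone} forces $m_j^s\equiv 1$, which is what makes the limit multiplicity one.

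The second facet of the same gap is your claimed bound $\theta(q;\|W\|)\leq\theta-2\delta_2$ at the clustering point. Upper semicontinuity of density gives $\theta(q;\|W\|)\geq\limsup_j\theta(q_j;\|\Sigma_j'\|)$, i.e.\ the \emph{wrong} direction; the fact that each singular point of $\Sigma_j'$ has density $\leq\theta-2\delta_2$ imposes no upper bound on the density of the limit at $q$, because the density drop of $\Sigma_j'$ may occur only at scales shrinking with $j$. With the outermost-radius rescaling, $W$ could even be a cone of density equal (or arbitrarily close) to $\theta$ with all the clustering singular points hidden at its vertex, so your iteration has no guaranteed definite drop and the termination bound $(\Lambda-\beta_0)/(2\delta_2)$ is not available. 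In the paper this is again what the stopping time buys: by construction $\theta(0,1;\|\Sigma_\infty\|)\leq\theta(0;\|C\|)-2\delta_3$, and then Corollary \ref{Cor_Sing Cap_Density Gap of subseq sing} applied to the limit bounds the density of \emph{every} singular point of $\Sigma_\infty$ by $\theta(0;\|C\|)-2\delta_3$, which is the quantified decay that drives the induction on the density threshold. Your overall scheme (contradiction plus finitely many density-dropping blow-ups) is the same circle of ideas as the paper's, but to make it work you must replace the outermost-singular-radius rescaling by the density-drop stopping-time rescaling (or an equivalent device) so as to secure simultaneously the multiplicity-one limit and the definite density drop.
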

    \begin{proof}
        The lemma essentially follows from \cite{naberSingularStructureRegularity2020} in dimension $8$. For completeness, here we give a simpler and more self-contained proof.
   
        Let $\varepsilon_1 = \varepsilon(2\Lambda)$ given in Lemma \ref{Lem_Sing Cap_Cone only converges to multi 1 cone} and $\delta_3:= \min\{\delta_1(\varepsilon_1/10, 2\Lambda), \delta_2(\varepsilon_1/10, 2\Lambda)\}/10$, where $\delta_1$ is given by Lemma \ref{Lem_Sing Cap_Cone rigidity} and $\delta_2$ is given by Corollary \ref{Cor_Sing Cap_Density Gap of subseq sing}. 
   
        Clearly, it suffices to prove inductively that for each integer $0\leq k\leq 1+ (\Lambda-1)/\delta_3$, 
        \begin{align}
            \sup\{ \limsup_{j\to \infty} \sharp \left(\mathrm{Sing}(\Sigma_j)\cap \BB_4\right)  \} < +\infty   \label{Sing Cap_num of Sing uniformly bded}\,,  
        \end{align}    
        where the supremum is taken among all the sequences of stable minimal hypersurfaces $\{\Sigma_j \subset (\BB_6, g_j)\}_{j\geq 1}$ such that $(\Sigma_j; \BB_6, g_j)\to (C; \BB_6, g_{Euc})$ in $\scM$ for some $C\in \scC_{1+ k\delta_3}$.
   
        For $k = 0$, by volume monotonicity formula and Allard regularity, (\ref{Sing Cap_num of Sing uniformly bded}) holds and the upper bound could be taken to be $1$.

        Suppose (\ref{Sing Cap_num of Sing uniformly bded}) holds for $k-1$ but fails for $k$, and then there exists a family of stable minimal hypersurfaces $\Sigma_j \subset (\BB_6, g_j)$, with $(\Sigma_j; \BB_6, g_j) \to (C; \BB_6, g_{Euc})$ in $\scM$ for some $C\in \scC_{1+k\delta_3}$ but $\sharp (\Sing(\Sigma_j)\cap \BB_4) \to \infty$. It follows from Corollary \ref{Cor_Sing Cap_Density Gap of subseq sing} that the first bullet holds in $\BB_1 \cap \Sigma_j$ for $j >> 1$.

        Let $x_j\in \mathrm{Sing}(\Sigma_j) \subset \BB_1$, and we can define\[ 
            r_j:= \inf\{r>0: \theta(x_j, 4; \|\Sigma_j\|) - \theta(x_j, r; \|\Sigma_j\|)\leq 2\delta_3 \} > 0\,.   \]
        By Schoen-Simon compactness, $x_j \to 0 \in\RR^8$ and $r_j\to 0_+$. By the choice of $\delta_3$ and $r_j$ as well as the volume monotonicity formula, for $j >> 1$, \[
            \theta(x_j, 4s; \|\Sigma_j\|) - \theta(x_j, s; \|\Sigma_j\|) \leq \delta_1(\varepsilon_1/10, 2\Lambda)/2,\ \ \ \forall r_j< s\leq 1\,.  \]
        Hence by Lemma \ref{Lem_Sing Cap_Cone rigidity}, for $j>>1$ and each $s\in (r_j, 1]$, 
        \[
            \mbfF_{\BB_4}( (\eta_{x_j, s})_{\sharp} |\Sigma_j|, m^s_j |C^s_j| )\leq \varepsilon_1/10\,,
        \]
        for some $C^s_j\in \scC_{2\Lambda}$ and $m_j^s\in \NN$. Moreover, $\mathrm{Sing}(\Sigma_j) \subset \BB_{2r_j}$.

        On the one hand, for $s\in [1/2, 1]$, since $|\Sigma_j|\to |C|$, we have $m_j^{s} = 1$ and $C^s_j$ can be chosen to be $C$ for $j$ even larger. On the other hand, since for every pair of varifolds $V_1, V_2$ and every $r\in (0,1)$ we have \[
            \mbfF_{\BB_4}((\eta_{0,r})_{\sharp}V_1, (\eta_{0,r})_{\sharp}V_2) \leq \mbfF_{\BB_4}(V_1, V_2)/r\,,    \]
        thus we have 
        \begin{align*}
            \mbfF_{\BB_4}(m_j^s|C_j^s|, m_j^{2s}|C_j^{2s}|) &\leq \mbfF_{\BB_4}(m_j^s|C_j^s|, (\eta_{x_j,s})_{\sharp}|\Sigma_j|)+ \mbfF_{\BB_4}(m_j^{2s}|C_j^{2s}|, (\eta_{x_j,s})_{\sharp}|\Sigma_j|)\\ 
            &\leq \varepsilon_1 / 5 + 2\mbfF_{\BB_4}(m_j^{2s}|C_j^{2s}|, (\eta_{x_j,2s})_{\sharp}|\Sigma_j|)\\
            &< \varepsilon_1\,,
        \end{align*}
        where we utilise the dilation invariance of the cone $C_j^{2s}$. By the choice of $\varepsilon_1$ and lemma \ref{Lem_Sing Cap_Cone only converges to multi 1 cone}, we can conclude that $m_j^s = m_j^{2s}\equiv 1$ for $j>>1$ and $s\in (r_j, 1]$.

        By Lemma \ref{Lem_Sing Cap_Cone rigidity}, $\hat{\Sigma}_j:= (\eta_{x_j, r_j})_{\sharp} \Sigma_j$ subconverges to some stable minimal hypersurface $\Sigma_\infty \subset (\RR^8, g_{Euc})$. By Lemma \ref{Lem_Sing Cap_Cone only converges to multi 1 cone}, the limit varifold should have multiplicity one due to the fact that $m_j^{Kr_j} = 1$ for all $K>1$ and $j>>1$. Moreover, $\mathrm{Sing}(\Sigma_\infty)\subset \BB_3$ is a finite set containing $0$, and by volume monotonicity formula, 
        \begin{equation*}
            \begin{aligned}
                \theta(0, \infty; \|\Sigma_\infty\|) \leq \theta(0; \|C\|)\,, \\
                \theta(0, 1; \|\Sigma_\infty\|)\leq \theta(0; \|C\|) - 2\delta_3\,.
            \end{aligned}
        \end{equation*}
        Hence, by Corollary \ref{Cor_Sing Cap_Density Gap of subseq sing}, only the case in the first bullet occurs, i.e., for every $x'\in \mathrm{Sing}(\Sigma_\infty)$, $\theta(x'; \|\Sigma_\infty\|)\leq \theta(0; \|C\|) - 2\delta_3$.
   
        Since $\sharp (\mathrm{Sing}(\hat{\Sigma}_j)\cap \BB_4)\to \infty$ but $\Sigma_\infty$ only has finitely many singular points, there exists $\hat{x}\in\mathrm{Sing}(\Sigma_\infty)\cap\BB_4$ and $\rho_j\to 0_+$ such that $\sharp\big(\BB_{\rho_j}(\hat{x})\cap \mathrm{Sing}(\hat{\Sigma}_j)\big)\to \infty$. As the tangent cone of $\Sigma_\infty$ at $\hat{x}$ has density bounded above by $\theta(0; \|C\|)-2\delta_3 \leq 1+ (k-1)\delta_3$, the blow-up picture contradicts to the inductive assumption.

        By induction, we conclude the existence of such finite $N(\Lambda)$.
    \end{proof}

    \begin{proof}[Proof of theorem \ref{Thm_Sing Cap_Main}.]
        By (i) and (ii) of definition \ref{Def_Sing Cap}, it suffices to define $\SCap$ restricted to $\scC$ and then verify (iii). Let $\{\Lambda_k\}_{k\geq 0}$ be an increasing family of real numbers given by
        \begin{enumerate}
            \item[•] $\Lambda_0 := 1$;
            \item[•] $\Lambda_k := \Lambda_{k-1}+ \delta_2(2 +\Lambda_{k-1}, 1)$, where $\delta_2$ is given by Corollary \ref{Cor_Sing Cap_Density Gap of subseq sing} and can be assumed WLOG to be monotonically decreasing in $\Lambda$.
        \end{enumerate}
%        \textcolor{red}{Note that by its definition, $\Lambda_k\to +\infty$ as $k\to \infty$.}

        For a trivial hyperplane $P \subset \mathbb{R}^8$, we have no choice but define $\SCap(P) := 0$.

        For a non-trivial $C\in \scC$ with $\theta(0; \|C\|)\in [\Lambda_{k-1}, \Lambda_k)$, let's define $\SCap(C):= \prod_{j=0}^{k}(1+N(\Lambda_j))$, where $N$ is given by lemma \ref{Lem_Sing Cap_Uniform bd on |Sing|}. 
   
        To verify Definition \ref{Def_Sing Cap} (iii), by \cite[Theorem~1]{schoenRegularityStableMinimal1981} again, it suffices to show that if $\Sigma_j \subset (\BB_6, g_j)$ are stable minimal hypersurfaces, $C\in \scC$ and $(\Sigma_j; \BB_6, g_j)\to (C; \BB_6, g_{Euc})$ in $\scM$, then \[
            \limsup_{j}\SCap(\Sigma_j; \BB_4, g_j) \leq \SCap(C)\,.\] 

        Suppose $\theta(0; \|C\|)\in [\Lambda_{k-1}, \Lambda_k)$, and we have the following three cases (up to subsequences).\\

        \paragraph*{\textbf{Case 1}} If $\sharp (\mathrm{Sing}(\Sigma_j)\cap \BB_4) >1$ for $j>>1$, then by Corollary \ref{Cor_Sing Cap_Density Gap of subseq sing}, each singularity $p$ of $\Sigma_j$ has density bounded above by $\theta(0; \|C\|)- 2\delta_2(2+\Lambda_{k-1}, 1)< \Lambda_{k-1}$ and thus,\[
            \SCap(C_p) \leq \prod^{k - 1}_{j = 0} (1 + N(\Lambda_j))\,.\]
        Therefore, \[
            \limsup_{j\to \infty} \SCap(\Sigma_j; \BB_4, g_j) \leq \prod_{j=0}^{k-1}(1+ N(\Lambda_j))\cdot \limsup_{j\to \infty} \sharp (\mathrm{Sing}(\Sigma_j)\cap \BB_4) \leq \SCap(C)\,.     \]\\  
   
        \paragraph*{\textbf{Case 2}} If $\mathrm{Sing}(\Sigma_j)\cap \BB_4$ is a single point $x_j$ for $j>>1$, then by volume monotonicity formula, $\limsup_j \theta(x_j; \|\Sigma_j\|) \leq \theta(0; \|C\|)<\Lambda_k$. Hence by definition, for $j >> 1$, $\theta(x_j; \|\Sigma_j\|) < \Lambda_k$ and \[
            \limsup_{j}\SCap(\Sigma_j; \BB_4, g_j) = \limsup_j \SCap(C_{x_j}) \leq \SCap(C)\,.\]\\

        \paragraph*{\textbf{Case 3}} If $Sing(\Sigma_j)\cap \BB_4 = \emptyset$ for $j>>1$, then apparently,
            \[
                \limsup_{j}\SCap(\Sigma_j; \BB_4, g_j) = 0 \leq \SCap(C)\,.
            \]
    \end{proof}\

    We will end this section with the following application of Singular Capacity.

    \begin{Lem} \label{Lem_Pf Main_General Perturb Lem}
        Let $\scG$ be the set of all $C^k$ ($k \geq 4$ or $k = \infty$) metrics on a closed Riemannian manifold $M^8$ and $\tilde{\scM}$ be a subspace of $\scM$, consisting of triples $(\Sigma; M, g)$ satisfying the following 
        \begin{enumerate}
            \item[(1)] $\forall (\Sigma; M, g)\in \tilde{\scM}$, $\Sigma$ is nondegenerate.
            \item[(2)] For any sequence $\set{(\Sigma_j; M, g_j)\in \tilde{\scM}}^\infty_{j = 1}$ and $(\Sigma_\infty; M, g_\infty) \in \tilde{\scM}$, with $g_j\to g_\infty$ in $C^k(M)$, we have \[
                    \liminf_j \mathrm{index}(\Sigma_j) = \mathrm{index}(\Sigma_\infty)\,,\] 
                and 
                \[
                    |\Sigma_j|\to |\Sigma_\infty|\,,
                \]
                in the varifold sense.
            \item[(3)] The projection map $\Pi: \tilde{\scM}\to \scG$ onto the third variable is injective.
        \end{enumerate}
        Then for every metric $g$ in the interior of $\overline{\Pi(\tilde{\scM})}\subset \scG$, there is a family of triples $(\Sigma_i; M, g_i)\in\tilde{\scM}$ such that $g_i\to g$ in $C^k$ and $\mathrm{Sing}(\Sigma_i) = \emptyset$.
    \end{Lem}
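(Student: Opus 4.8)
The plan is to use $\SCap$ as a Morse-type index on the set of admissible metrics and to run a \emph{minimize-then-descend} argument. By hypothesis~(3), every $g'\in\Pi(\tilde{\scM})$ determines a unique minimal hypersurface $\Sigma_{g'}$ with $(\Sigma_{g'};M,g')\in\tilde{\scM}$; put $h(g'):=\SCap(\Sigma_{g'};M,g')\in\NN$. Fix $g\in\mathrm{int}(\overline{\Pi(\tilde{\scM})})$ and $\varepsilon>0$ so small that $\overline{B_\varepsilon(g)}\subset\mathrm{int}(\overline{\Pi(\tilde{\scM})})$; then $\Pi(\tilde{\scM})$ is dense in $B_\varepsilon(g)$, so $m:=\min\{h(g'):g'\in\Pi(\tilde{\scM}),\ \|g'-g\|_{C^k}<\varepsilon/2\}$ is a well-defined nonnegative integer, attained at some $g_0$. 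If $m=0$ then $\Sigma_{g_0}$ is smooth and $\|g_0-g\|_{C^k}<\varepsilon$, and letting $\varepsilon\downarrow0$ yields the required family; so the whole lemma reduces to the following descent step: \emph{for every $g_0\in\Pi(\tilde{\scM})\cap\mathrm{int}(\overline{\Pi(\tilde{\scM})})$ with $h(g_0)\ge 1$ and every $\eta>0$, there is $g_1\in\Pi(\tilde{\scM})$ with $\|g_1-g_0\|_{C^k}<\eta$ and $h(g_1)\le h(g_0)-1$.} Applied to the minimizer $g_0$ with $\eta:=\varepsilon/2-\|g_0-g\|_{C^k}>0$, this contradicts the minimality of $m$ unless $m=0$.

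For the descent step, write $\Sigma_0:=\Sigma_{g_0}$, which is nondegenerate by~(1) (and may be assumed two-sided, passing to its orientation double cover otherwise), and has $\Sing(\Sigma_0)=\{p_0,\dots,p_N\}\neq\emptyset$ because $h(g_0)=\SCap(\Sigma_0;M,g_0)\ge 1$. By Lemma~\ref{Lem_Pre_Perturb to find MH reg} applied to a perturbation of $g_0$ along a generic symmetric $2$-tensor $k$ supported away from $\Sing(\Sigma_0)$ --- its proof, the Jacobi-field analysis of \cite[Section~4]{wangDeformationsSingularMinimal2020}, goes through verbatim from the displayed conformal case $k=fg_0$ with $\nu(f)|_{\Sigma_0}\not\equiv 0$ --- fix such a generic $k$. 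Since $g_0$ is interior to $\overline{\Pi(\tilde{\scM})}$, choose $c_j\downarrow 0$ and $g_j\in\Pi(\tilde{\scM})$ with $\|g_j-(g_0+c_jk)\|_{C^k}\le c_j^2$, so that $g_j=g_0+c_jk_j$ with $k_j\to k$ in $C^4$ and $g_j\to g_0$ in $C^k$. Put $\Sigma_j:=\Sigma_{g_j}$. By~(2), $|\Sigma_j|\to|\Sigma_0|$ in the varifold sense, the limit being the multiplicity-one hypersurface $\Sigma_0$, hence (Schoen--Simon compactness and Allard regularity) $\Sigma_j\to\Sigma_0$ in $C^2_{\mathrm{loc}}(M\setminus\Sing(\Sigma_0))$; and $\liminf_j\mathrm{index}(\Sigma_j)=\mathrm{index}(\Sigma_0)$, so after passing to a subsequence $\mathrm{index}(\Sigma_j)=\mathrm{index}(\Sigma_0)$ for all $j$. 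These are the hypotheses of Lemma~\ref{lemma:exist_Jac}, and Lemma~\ref{Lem_Pre_Perturb to find MH reg} then gives a small neighborhood $U_{p_0}$ of some $p_0\in\Sing(\Sigma_0)$ (so relabelled) with $\Sing(\Sigma_j)\cap U_{p_0}=\emptyset$ for infinitely many $j$.

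Now convert this into a drop of $\SCap$ using its local upper semicontinuity. Pick pairwise disjoint open sets $U_{p_i}\subset\subset M$, $i=0,\dots,N$, with $\partial U_{p_i}\cap\Sing(\Sigma_0)=\emptyset$. For $j$ large, $\Sing(\Sigma_j)\subset\bigcup_i U_{p_i}$ (by the $C^2_{\mathrm{loc}}$ convergence off $\Sing(\Sigma_0)$), and Definition~\ref{Def_Sing Cap}(iii) on each $U_{p_i}$ (the $\Sigma_j$ being stable as elements of $\scM$) gives $\SCap(\Sigma_j;U_{p_i},g_j)\le\SCap(\Sigma_0;U_{p_i},g_0)=\SCap(C_{p_i})$. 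Choosing $j$ large and also satisfying $\Sing(\Sigma_j)\cap U_{p_0}=\emptyset$ (infinitely many such $j$) and $\|g_j-g_0\|_{C^k}<\eta$, and using Definition~\ref{Def_Sing Cap}(i),(ii),
\[
  h(g_j)=\SCap(\Sigma_j;M,g_j)=\sum_{i=1}^{N}\SCap(\Sigma_j;U_{p_i},g_j)\le\sum_{i=1}^{N}\SCap(C_{p_i})=\SCap(\Sigma_0;M,g_0)-\SCap(C_{p_0})\le h(g_0)-1 ,
\]
so $g_1:=g_j$ works, which completes the descent step and the proof.

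The main obstacle is the descent step, and within it the reconciliation of two competing requirements on the perturbed metrics $g_j$: they must lie in $\Pi(\tilde{\scM})$, so that $\Sigma_{g_j}$ exists and inherits the index and varifold control of~(2), yet the perturbation $g_j-g_0$ must be of a type to which the Jacobi-field construction of \cite[Section~4]{wangDeformationsSingularMinimal2020} applies, so that some singular point of $\Sigma_{g_j}$ is forced to vanish. Because $\Pi(\tilde{\scM})$ is only dense --- not open --- near $g_0$, one cannot simply intersect it with a fixed conformal slice; the hypothesis $g_0\in\mathrm{int}(\overline{\Pi(\tilde{\scM})})$ is exactly what lets one approximate the generic-direction perturbation $g_0+c_jk$ to second order by elements of $\Pi(\tilde{\scM})$, while the insensitivity of the asymptotic-rate analysis to an $O(c_j^2)$ correction of the metric keeps the conclusion of Lemma~\ref{Lem_Pre_Perturb to find MH reg} available. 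A secondary point requiring care is that $\SCap$ is only upper semicontinuous \emph{locally} about each singular point, so the estimate above must be organized on the disjoint $U_{p_i}$, together with the (standard) fact that no singular points of $\Sigma_j$ escape $\bigcup_i U_{p_i}$ once the convergence is multiplicity one.
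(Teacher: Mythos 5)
Your strategy is essentially the paper's: perturb generically away from $\Sing(\Sigma_0)$, use density of $\Pi(\tilde{\scM})$ to realize the perturbed metrics inside $\tilde{\scM}$, apply Lemma \ref{lemma:exist_Jac} and Lemma \ref{Lem_Pre_Perturb to find MH reg} to remove the singularity near some $p^*$, and then use Definition \ref{Def_Sing Cap}(ii)--(iii) on disjoint neighborhoods of the singular points to force a drop of $\SCap$ by at least one; your minimize-then-descend bookkeeping is equivalent to the paper's induction on $\SCap$. But there is a genuine gap in the descent step: Definition \ref{Def_Sing Cap}(iii) requires the $\Sigma_j$ to be \emph{stable} in the region where you apply it, and your justification ``the $\Sigma_j$ being stable as elements of $\scM$'' is not valid. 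Membership in $\scM$ (and in $\tilde{\scM}$) only carries the blanket convention of \emph{local} stability, with stability neighborhoods that may shrink with $j$, and hypothesis (1) gives nondegeneracy, not stability; nothing in the lemma's hypotheses makes $\Sigma_j$ stable in a fixed $U_{p_i}$ (in the statement being proved, the $\Sigma_j$ may have positive index). The paper closes exactly this hole by choosing the disjoint neighborhoods so that $\mathrm{index}(\Sigma_0\setminus\bigcup_i U_{p_i})=\mathrm{index}(\Sigma_0)$; then, since $\Sigma_j\to\Sigma_0$ smoothly away from $\Sing(\Sigma_0)$ and $\liminf_j\mathrm{index}(\Sigma_j)=\mathrm{index}(\Sigma_0)$ by condition (2), instability of some $\Sigma_j$ inside some $U_{p_i}$ along a subsequence would force $\mathrm{index}(\Sigma_j)\geq\mathrm{index}(\Sigma_0)+1$ infinitely often, a contradiction; hence $\Sigma_j$ is stable in each $U_{p_i}$ for large $j$ and (iii) applies. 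Some such argument is indispensable in your write-up.

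A second point: you invoke Lemma \ref{Lem_Pre_Perturb to find MH reg} for perturbations $g_j=g_0+c_jk_j$ along a general symmetric $2$-tensor, asserting that the Jacobi-field analysis of \cite[Section~4]{wangDeformationsSingularMinimal2020} ``goes through verbatim.'' As stated, Lemmas \ref{lemma:exist_Jac} and \ref{Lem_Pre_Perturb to find MH reg} cover only conformal deformations $(1+c_jf_j)g$, and the genericity set $\scF$ is a set of functions, so this is an unproven extension rather than a citation. The paper instead stays within the conformal family, taking the approximating metrics in the form $g^{(j)}=g_0(1+f^{(j)}/j)$ with $f^{(j)}\to f$ in $C^k$, so that the cited lemmas apply as stated. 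Your underlying observation --- that density of $\Pi(\tilde{\scM})$ by itself does not hand you approximants in a prescribed conformal slice --- is a fair one, but to make your route rigorous you would have to actually prove the tensorial (or $O(c_j^2)$-perturbed) version of the Jacobi-field lemmas rather than assert it.
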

    \begin{proof}
        Let $g$ be a metric in the interior of $\overline{\Pi(\tilde{\scM})}$ and $\scU$ is an arbitrary $C^k$ neighborhood of $g$ which is also contained in $\overline{\Pi(\tilde{\scM})}$. Let $(\Sigma_0; M, g_0)\in\tilde{\scM}$ such that $g_0\in \scU$. Since each singular point of $\Sigma_0$ is isolated, we have $\SCap (\Sigma_0; M, g_0)< +\infty$.
     
        We shall prove inductively that there exists a sequence $\{(\Sigma_l; M, g_l)\}_{l\in \NN}\subset \tilde{\scM} \cap \Pi^{-1}(\scU)$ such that for each $l$,
        \begin{itemize}
            \item either $\mathrm{Sing}(\Sigma_l)=\emptyset$ (and take $(\Sigma_{l+1}; g_{l+1}):=(\Sigma_l; g_l)$);
            \item or $\SCap(\Sigma_{l+1}; M, g_{l+1})\leq \SCap(\Sigma_l; M, g_l) - 1$.
        \end{itemize}
        Note that by definition $\ref{Def_Sing Cap}$, $\SCap(\Sigma; M, g) = 0 \iff \mathrm{Sing}(\Sigma)=\emptyset$. Hence for sufficiently large $N> \SCap(\Sigma_0;M, g_0)$, $\Sigma_N$ is a closed smooth minimal hypersurface in $g_N$ and $(\Sigma_N; M, g_N)$ can be one item in the desired sequence.
     
        Suppose $(\Sigma_l, M, g_l)$ for some $l \in \NN$ has been constructed, and WLOG $\mathrm{Sing}(\Sigma_l) \neq \emptyset$. Let $\scF\subset~C_c^\infty(M\setminus\mathrm{Sing}(\Sigma_l))$ depending on $\Sigma_l, M$ and $g_l$ be specified as in Lemma \ref{Lem_Pre_Perturb to find MH reg} and fix an $f\in \scF$. Since $\Pi(\tilde{\scM})$ is $C^k$-dense in $\scU$, there exists a sequence of $(\Sigma^{(j)}; M, g^{(j)})\in \tilde{\scM}$ so that $g^{(j)}= g_l(1 + f^{(j)}/j)$ for some smooth functions $f^{(j)}\to f$ in $C^k$. 

        The definition of $\tilde{\scM}$ implies that $|\Sigma^{(j)}|\to |\Sigma_l|$. Let $\{U_p \ni p\}_{p\in \mathrm{Sing}(\Sigma_l)}$ be a finite pairwise disjoint family of open subsets of $M$ such that 
        \begin{equation}
            \mathrm{index}(\Sigma_l \setminus \bigcup U_p) = \mathrm{index}(\Sigma_l)\,.    
        \end{equation}
        By Lemma \ref{Lem_Pre_Perturb to find MH reg}, after passing to a subsequence, there exists some $p^*\in \mathrm{Sing}(\Sigma_l)$ such that $\mathrm{Sing}(\Sigma^{(j)})\cap U_{p^*} = \emptyset$ for $j>>1$. In addition, Condition (2) and the choice of $U_p$ implies that for sufficiently large $j$, in each $U_p$, $\Sigma^{(j)}$ is stable. Hence by Definition \ref{Def_Sing Cap}, 
        \begin{align*}
            \limsup_{j\to \infty} \SCap(\Sigma^{(j)}; M, g^{(j)}) 
                & = \limsup_{j\to \infty} \sum_{p\in \mathrm{Sing}(\Sigma_l)}\SCap(\Sigma^{(j)}; U_p, g^{(j)}) \\
                & \leq  \sum_{p^* \neq p\in Sing(\Sigma_l)}\SCap(\Sigma_l; U_p, g_l) \\
                & \leq \SCap(\Sigma_l; M, g_l) - \SCap(\Sigma_l; U_{p^*}, g_l) \\
                & \leq \SCap(\Sigma_l; M, g_l) - 1\,.
        \end{align*}
        We can then choose $(\Sigma_{l+1}, g_{l+1}) := (\Sigma^{(j)}, g^{(j)})$ for a sufficiently large $j$ such that $g^{(j)}\in \scU$ and $\SCap(\Sigma^{(j)}; M, g^{(j)}) \leq \SCap(\Sigma_l; M, g_l) - 1$.
    \end{proof}

\section{Proof of Theorem \ref{Thm_Intro_Main}}

    Given a closed Riemannian manifold $M^8$ with $H_7(M, \mathbb{Z}_2) = 0$ and $k \geq 4$ ($k$ could be $\infty$), we define 
    \begin{align*}
        \scG &:= \set{C^k \text{ Riemannian metrics on } M}\,;\\
        \scG_F &:= \set{g \in \scG| (M, g) \text{ has Frankel property}}\,;\\
        \scG_{NF} &:= \scG\backslash \scG_F\,;\\
        \scR &:= \set{g \in \scG| (M, g) \text{ admits a nondegenerate smooth minimal hypersurface}}\,.
    \end{align*}

    By B. White's structure theorem of smooth minimal hypersurfaces \cite{whiteSpaceMinimalSubmanifolds1991,whiteBumpyMetricsTheorem2017}, $\scR$ is an open subset in $\scG$. It suffices to show that $\scR$ is dense in $\scG$.

    Observe that for any given $g \in \scG_{NF}$, the candidate minimal hypersurface $\Sigma$ generated in Section \ref{sec:gen} is at least locally one-sided area-minimizing. Therefore, by Lemma \ref{lem:surgery}, $\scR$ is dense in $\overline{\scG_{NF}}$.

    Now, let's focus on its complement $\mathrm{int}(\scG_F)$ and we shall show that $\scR$ is a dense in $\mathrm{int}(\scG_F)$.
    
    \begin{proof}[Proof of theorem \ref{Thm_Intro_Main}.]
        As mentioned above, it suffices to show that $\scR$ is dense in $\mathrm{int}(\scG_F)$. Let 
        \begin{align*}
            \scG_F^s:= \{g\in \mathrm{int}(\scG_F):& \exists!\ \text{minimal hypersurface }\Sigma \text{ generated from an (ONVP)}\\ 
                &\text{sweepout and realizing }\cW(M, g), \text{ and }\Sigma\text{ is non-degenerate stable}\}      
        \end{align*}
     
        By Lemma \ref{lem:dichotomy}, $\forall g \in \mathrm{int}(\scG_F) \setminus \overline{\scG_F^s}$, there exists a minimal hypersurface $\Sigma$ in $(M, g)$ with $\mathfrak{h}_{nm}(\Sigma)=\emptyset$. Therefore, by Lemma \ref{lem:surgery}, 
        \begin{equation}\label{rel1}
            \mathrm{int}(\scG_F) \setminus \mathrm{int}(\overline{\scG_F^s}) \subset \overline{\mathrm{int}(\scG_F)\setminus \overline{\scG_F^s} }\subset \overline \scR\,.
        \end{equation}
        
        On the other hand, it's easy to verify that the set
        \begin{align*}
            \tilde{\scM}:= \{(\Sigma; M, g): g\in \overline{\scG_F^s},&\ \Sigma \text{ is the nondegenerate stable minimal hypersurface }\\
            & \text{generated from an (ONVP) sweepout and realizing }\cW(M, g)\}      
        \end{align*}
        satisfies the assumptions in Lemma \ref{Lem_Pf Main_General Perturb Lem}. Hence,
        \begin{equation}\label{rel2}
            \mathrm{int}(\overline{\scG_F^s}) \subset \overline\scR\,.
        \end{equation}

        Combining (\ref{rel1}) and (\ref{rel2}), we have $\mathrm{int}(\scG_F) \subset \overline\scR$. This completes the proof of Theorem~\ref{Thm_Intro_Main}.
    \end{proof}

\bibliography{reference}
\end{document}